\DeclareMathOperator{\arcsinh}{arcsinh}
\DeclareMathOperator{\arccot}{arccot}
\theoremstyle{plain} 
\newtheorem{thm}{Theorem}[section]
\newtheorem{cor}[thm]{Corollary} 
\newtheorem{lemma}[thm]{Lemma} 
\newtheorem{prop}[thm]{Proposition}
\newtheorem{defn}[thm]{Definition}
\newtheorem{thmv}[thm]{Theorem}
\newtheorem{rem}[thm]{Remark}
\theoremstyle{definition}
\newtheorem{esempio}[thm]{Example}
\newtheorem{notaz}[thm]{Notation}
\numberwithin{equation}{section}
\newcommand\E{\mathcal{E}}
\newcommand\om{\omega^2}
\newcommand\R{\mathbb{R}}
\newcommand\Eh{\E+h}
\newcommand{\x}[1]{\xi_{#1}}
\newcommand\bx{\bar{\xi}}
\newcommand{\stkout}[1]{\ifmmode\text{\sout{\ensuremath{#1}}}\else\sout{#1}\fi}
\newcommand\Prob{HS}
\title{Refraction Periodic Trajectories in Central Mass Galaxies}
\author{Irene De Blasi and Susanna Terracini}
\address{Dipartimento di Matematica ``G. Peano''
	\newline\indent
	Universit\`a degli Studi di Torino
	\newline\indent
	 Via Carlo Alberto 10, 10123 Torino, Italy\\}
\email{irene.deblasi@unito.it}
\email{susanna.terracini@unito.it}
\date{\today} 
\keywords{refraction, black holes, periodic solutions, stability,  variational methods}
\subjclass[2010] {
37N05, 
70G75 
(70F16, 
70F10) 
}
\begin{document}
	\maketitle
	
	
\begin{abstract}
We consider a new type of dynamical systems of physical interest, where  two different forces act in two complementary regions of the space, namely a Keplerian attractive center sits in the inner region, while an harmonic oscillator is acting in the outer one. In addition, the two regions are separated by an interface $\Sigma$, where a Snell's law of ray refraction holds. Trajectories concatenate arcs of Keplerian hyperbol\ae~ with harmonic ellipses, with a refraction at the boundary. When the interface also has a radial symmetry, then the system is integrable, and we are interested in the effect of the geometry of the interface  on the stability and bifurcation of periodic orbits from the homotetic collision-ejection ones. We give local condition on the geometry of the interface for the stability and obtain a complete picture of stability and bifurcations in the elliptic case for period one and period two orbits.
\end{abstract}

\section{Introduction and description of the model}\label{sec: model}

According with Bertrand's theorem, among all  central forces with bounded trajectories, there are only two cases with the property that all orbits are also periodic: the attractive inverse-square gravitational force and the linear elastic restoring one governed by Hooke's law.

In this paper we consider a new type of dynamical systems of physical interest, where such two forces act in two complementary regions of the space; a Keplerian attractive center sits in the inner region, while an harmonic oscillator is acting in the outer one. In addition, the two regions are separated by an interface $\Sigma$, where a Snell's law of ray refraction holds. Hence trajectories concatenate arcs of Keplerian hyperbol\ae~ with harmonic ellipses, with a refraction at the boundary. When the interface also has a radial symmetry, then the system is integrable, and we are interested in the effect of symmetry breaking  on the stability and bifurcation of periodic orbits. A subsequent paper \cite{IreneSusNew} will be devoted to the analysis, in terms of KAM and Mather theories, of systems with close to circular interfaces.

Our first motivation comes from an elliptical galaxy model with a central core, of interest in Celestial Mechanics \cite{Delis20152448}, which deals with the dynamics of a point-mass particle $P$ moving in a galaxy with an harmonic biaxial core, in whose center there is a Black Hole. As known, Black Holes appear when, caused by gravitation collapse,  the mass densities of celestial bodies exceeds some critical value, and  act as  attractors of both matter and light. Following the relativistic equivalence between energy and matter, the critical  behaviour in the presence of Black Holes has been the recent object of investigation related with optical properties of metamaterials \cite{Genov2009687}. In this framework, light behaves in space as in an optical medium having an effective refraction index which incorporates the gravitational field and may have a discontinuity accounting for the inhomogeneity of the  material itself. Therefore, our type of systems are of interests in view of possible applications in engineering artificial optical devices that control, slow and trap light in a controlled manner \cite{Krishnamoorthy2012205}. The systems we consider are somehow reminiscent of billiards, but with two fundamental differences: first of all the rays are curved by the gravitational force;  moreover, reflection is replaced by refraction combined with excursion in the outer region. There is a vast bibliography on Birkhoff billiards, with recent relevant advances (see the book \cite{MR2168892} and papers \cite{MR3868417,MR3815464,MR3788206,MR4085879}).  Recently, some cases of composite billiard  with reflections and refractions have been studied \cite{Baryakhtar2006292}, also in the case of a periodic inhomogenous lattice \cite{Glendinning2016}. Finally, a problem with some stronger analogy with ours is that of inverse magnetic billiards, where the trajectories of a charged particle in this
setting are straight lines concatenated with circular arcs of a given Larmor radius \cite{GasiorekThesis,gasiorek2019dynamics}. Let us add that, compared with the cases mentioned, an additional difficulty is that the corresponding return map is not globally well defined.

Going back to our model in Celestial Mechanics,  supposing the axes of the galaxy's mass distribution being orthogonal, we can then use a planar reference frame whose $x$ and $y$ axes are the galaxy's ones, while the BH is at its origin. In the described reference frame, we denote with $z\in\mathbb{R}^2$ the particle's coordinates. In the model studied in the present work, the plane $\mathbb{R}^2$ is divided into two regions, according to whether the gravitational effects of the galaxy's mass distribution or of the BH dominate. The BH's domain of influence is set to be a generic regular domain $\boldsymbol{0}\in D\subset\mathbb{R}^2$, and the particle moves on the plane under the influence of inner and external potentials 
\begin{equation}\label{potential}
	V(z)=
	\begin{cases}
		V_I(z)=\mathcal{E}+h+\frac{\mu}{\vert z\vert} \quad &\text{if }z\in D\\
		V_E(z)=\mathcal{E}-\frac{\omega^2}{2}\vert z\vert^2 &\text{if } z\notin D, 
	\end{cases}
\end{equation}   
whith \textcolor{black}{$\mathcal{E},\mu, \omega>0$ and $\Eh>0$},  while the behaviour of the particle's trajectory while it reaches the boundary $\partial D=\Sigma$ is ruled by a generalization of Snell's law (see Section \ref{subs:snell}). 
The motion of $P$ will take place inside the \emph{Hill's region} 
\begin{equation*}
	\mathcal{H}=\{z\in\mathbb{R}^2\quad \vert\quad V_E(z)\geq0\};
\end{equation*}
for computational reasons, we impose $2\mathcal{E}>\omega^2$ to ensure that the circle of radius $1$ $S^1$ is contained in $\mathcal{H}$. 
\begin{figure}[!h]
	\centering
	\includegraphics[height=.2\textheight]{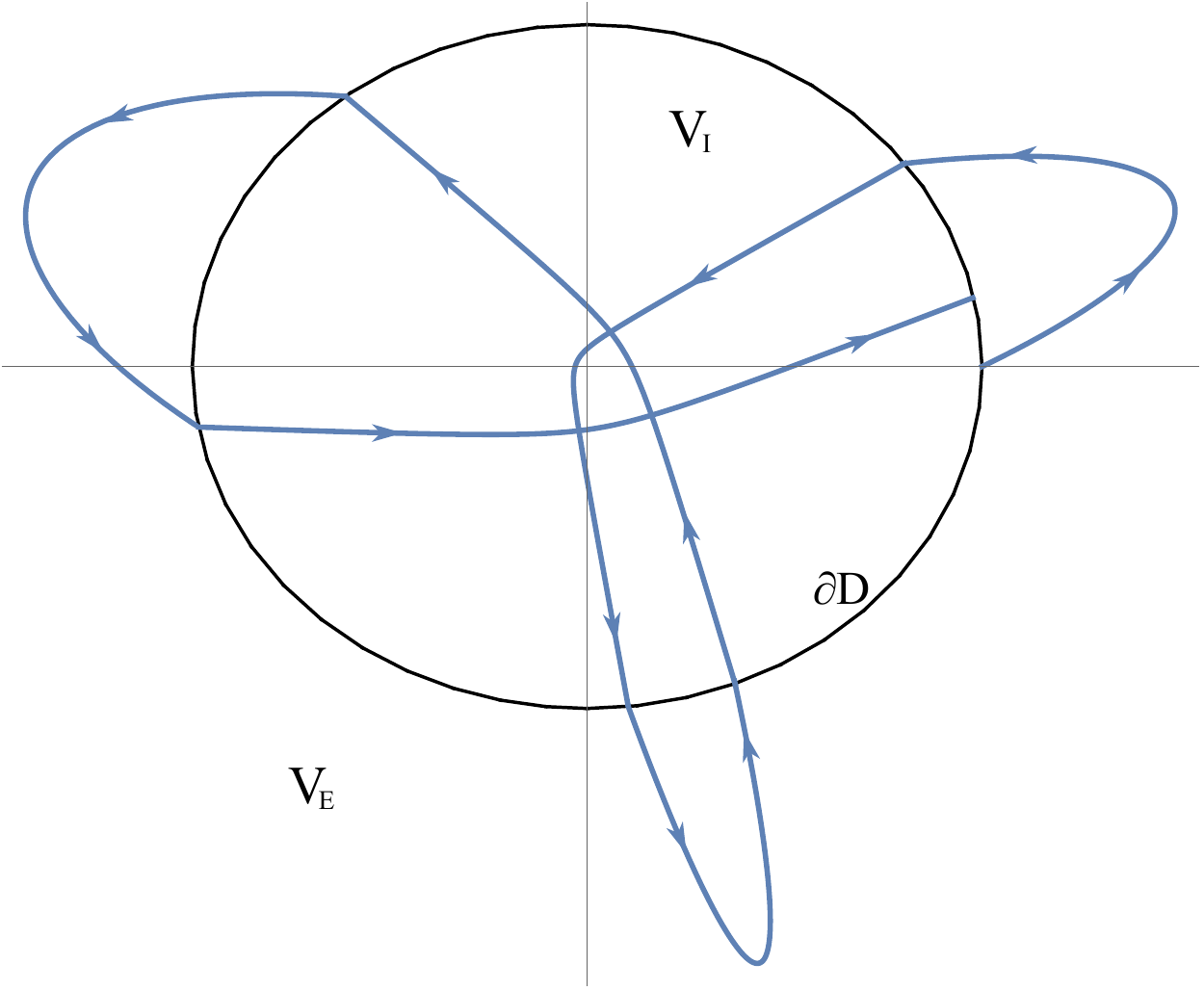}
	\hglue2cm
	\includegraphics[height=.2\textheight]{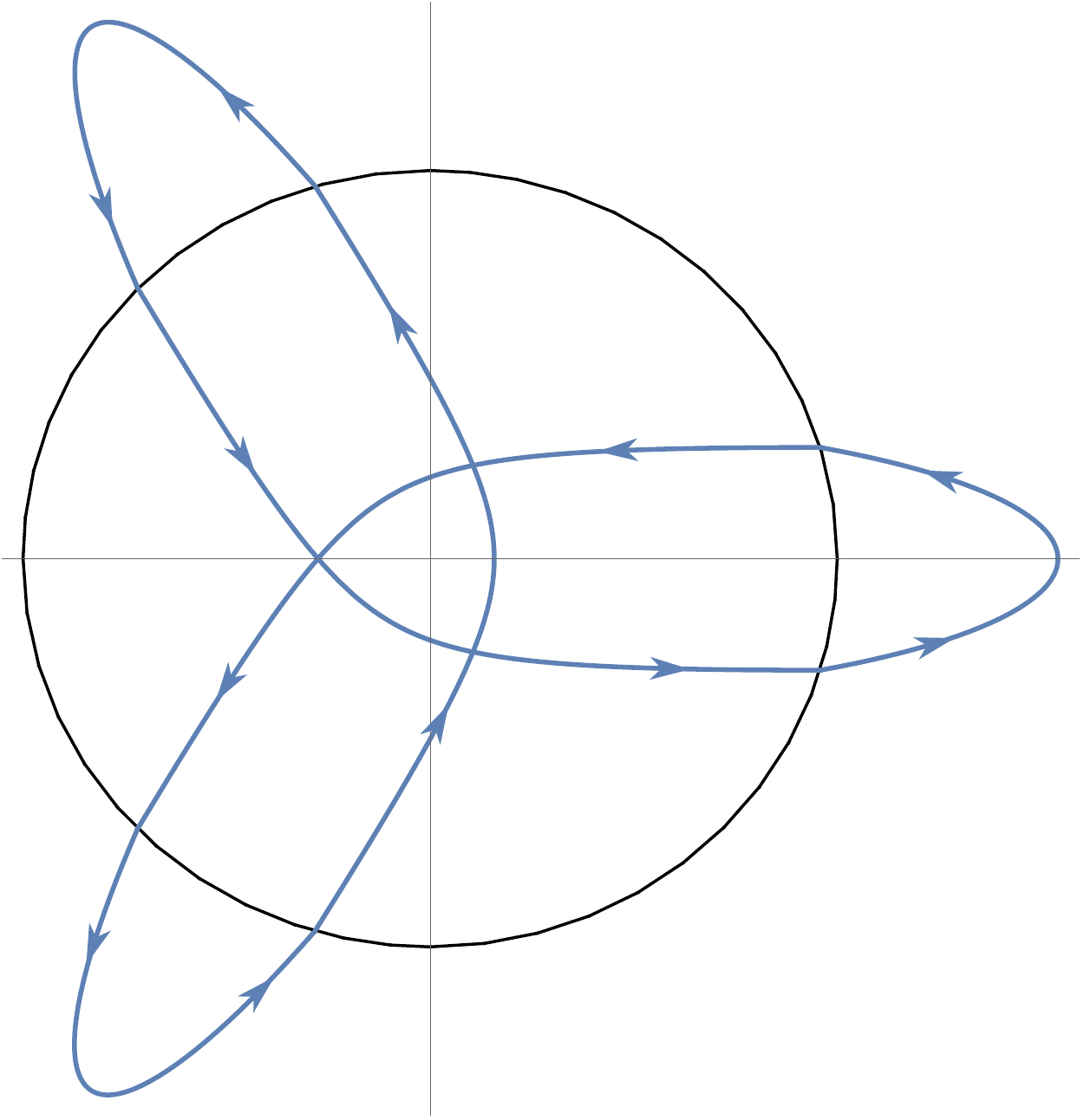}
	\caption{Left: trajectory for the general case. The inner and outer arcs are connected by a refraction Snell's law. Right: a period three orbit for an elliptic domain with eccentricity $e=0.3$ and physical parameters $\E=2.5,$ $\omega=\sqrt{2},$ $h=0.1$ and $\mu=1$. }
	\label{fig:intro-generale}
\end{figure}

The aim of this work is to study the trajectories of zero energy of the system whose potential is defined as in (\ref{potential}), in relation with the geometry of the boundary $\partial D$, taking $\mathcal E,h,\mu$ and $\omega$ as parameters. Though usually $\partial D$ will be elliptical shaped, most of our results just involve its geometrical features, namely its tangent and curvature. More precisely we \emph{will \textcolor{black}{usually} assume that $\partial D$ intersects orthogonally both coordinate axes}. In this way, there will be two collision homotetic periodic solutions in the horizontal and vertical directions. Taking advantage of Levi-Civita regularisation (\cite{Levi-Civita}), we may indeed assume motions to be extended after a collision with the gravity center by complete reflection.
We are concerned with the stability (dynamical and structural) of such periodic trajectories and their bifurcations in dependence of the system parameters. We shall focus in particular on bifurcations of free-fall period-two trajectories. In the case of the ellipse we will be able to describe the full picture, in dependence of the physical parameters.

Although the presence of periodic orbits depends in general on the global geometry of $D$, as well as on the physical parameters $\E, h, \mu, \omega$, there is a class of them whose existence is guaranteed by particular local conditions on $\partial D$: this is the case with the \emph{homotetic orbits}, which keep a fixed configuration during motion. Let us suppose that $\partial D$ is a curve of class $C^2$, and take $\mathbf{p}\in\partial D$, satisfying the two conditions 
\begin{equation}\label{homcond}
	\begin{aligned}
		&(i)\quad&&\mathbf{p}\perp\partial D, \\
		&(ii)\quad&&\text{defined }s=t\mathbf{p},\text{ } t\in[0,\infty),\text{ } supp(s)\cap\partial D=\{\mathbf{p}\}. 
	\end{aligned}
\end{equation}
In this case, the system admits a collision homotetic orbit in the direction of $\mathbf{p}$, which we denote by $\bar{z}_p(t)$. While condition $(i)$ is necessary to assure that the orbit is not deflected when crossing the interface $\partial D$ by Snell's law, condition $(ii)$, along with the regularity of the boundary, guarantees that the rays starting from the origin intersect it only once. As a consequence, conditions (\ref{homcond}) not only imply the existence of the homotetic orbits, but also the existence and uniqueness of inner and outer arcs in some neighbourhoods, as well as the good definition of the refraction law in its vicinity. Under the hypotheses (\ref{homcond}), it makes then sense to study the linear stability of $\bar{z}_p$ under the regularised flow:  indeed,  the following Theorem provides a full characterisation of stability in terms of the physical parameters and the local properties of $\partial D$ in $\mathbf{p}$. 
\begin{thmv}\label{intro thm 1}
	Let us suppose $\partial D=supp(\gamma)$, with $\gamma\in C^2([0,2\pi])$. Let $\mathbf{p}=\gamma(\bx)\in\partial D$ satisfying (\ref{homcond}), denote $k(\bar{\xi})$ the curvature at $\gamma(\bar\xi)$, and denote with $\bar{z}_p$ the homotetic orbit in the direction of $\mathbf{p}$. Let the inner and outer potentials be defined as in \eqref{potential}. Then, we have

\begin{itemize}
	\item if $\Delta(\bx)>0$, then $\bar{z}_p$ is linearly unstable; 
	\item if $\Delta(\bx)<0$, then $\bar{z}_p$ is linearly stable,
\end{itemize}
where we have denoted:

	\begin{equation*}
		\begin{aligned}
			&\Delta(\bx)(\E,h,\mu,\omega;\gamma)=A B C D\\
			&A=\frac{16}{\mathcal{E}^2\mu^2}\left(\sqrt{V_I(\gamma(\bar{\xi})}-\sqrt{V_E(\gamma(\bar{\xi}))}\right)\left(|\gamma(\bx)|k(\bar{\xi})-1\right), \\
			&B=\mathcal{E}-\left(|\gamma(\bx)|k(\bar{\xi})-1\right)\left(\sqrt{V_I(\gamma(\bar{\xi}))}-\sqrt{V_E(\gamma(\bar{\xi}))}\right)\sqrt{V_E(\gamma(\bar{\xi}))},\\ 
			&C=-\mu\sqrt{V_E(\gamma({\bar{\xi}}))}+2|\gamma(	\bx)|B\sqrt{V_I(\gamma(\bx))}, \\
			&D=\mu+2|\gamma(\bx)|\left(|\gamma(\bx)|k(\bar{\xi})-1\right)\sqrt{V_I(\gamma(\bar{\xi}))}\left(\sqrt{V_I(\gamma(\bar{\xi}))}-\sqrt{V_E(\gamma(\bar{\xi}))}\right).
		\end{aligned}
	\end{equation*}
\end{thmv}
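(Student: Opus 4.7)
My plan is to compute the monodromy matrix $M$ of the transverse linearization along $\bar z_p$ and apply the standard symplectic stability criterion for a $2\times 2$ symplectic matrix: $|\mathrm{tr}(M)|<2$ yields linear stability while $|\mathrm{tr}(M)|>2$ yields linear instability. Accordingly I expect $\Delta(\bx)$ to equal $\mathrm{tr}(M)^2-4$ (or the analogous discriminant of the half-period map) up to a strictly positive prefactor, and the product structure $\Delta=ABCD$ to mirror the natural decomposition of one full period of $\bar z_p$ into four elementary pieces: an inner Kepler arc from the collision to the interface, a refraction at $\mathbf p$, an outer harmonic arc from $\mathbf p$ to its apex, and the time-reversed copies.

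First I would introduce Jacobi-type coordinates $(\eta,p_\eta)$ transverse to the radial direction of $\bar z_p$, using the radial symmetry of both $V_I$ and $V_E$ to decouple radial and transverse motions in each region. In the inner region I regularize the Kepler collision via Levi-Civita and write down the explicit $2\times 2$ fundamental matrix of the transverse variational equation along the regularized homotetic arc from the origin to $\mathbf p$. In the outer region the same is done directly, since the harmonic flow is already linear. Evaluating both fundamental matrices at the crossing radius $|\gamma(\bx)|$ and using energy conservation along $\bar z_p$ expresses each block entirely in terms of $\sqrt{V_I(\gamma(\bx))}$ and $\sqrt{V_E(\gamma(\bx))}$, precisely the quantities appearing in the statement.

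The key geometric step is the linearization of Snell's law at the interface. By condition (\ref{homcond}), the homotetic orbit meets $\partial D$ orthogonally at $\mathbf p$, so a nearby orbit hits $\partial D$ at a point $\gamma(\bx+\delta\xi)$ whose outward normal has tilted by an amount controlled, to leading order, by the curvature $k(\bx)$. Expanding the refraction relation $\sqrt{V_I}\,\sin\theta_I=\sqrt{V_E}\,\sin\theta_E$ to first order about normal incidence and combining this with the geometric tilt of the normal produces a $2\times 2$ refraction block depending only on $|\gamma(\bx)|$, $k(\bx)$ and the speed ratio $\sqrt{V_I}/\sqrt{V_E}$. I expect the combination $|\gamma(\bx)|k(\bx)-1$, which measures the deviation of $\partial D$ from the osculating circle centered at the origin, to emerge as the dominant geometric factor, as indeed it does throughout $A$, $B$ and $D$.

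To assemble the monodromy I would exploit the time-reversal symmetry of $\bar z_p$, which allows one to write $M=M_{1/2}^2$ and to reduce the stability discriminant to a polynomial in $\mathrm{tr}(M_{1/2})$. Multiplying the inner Kepler block, the refraction block and the outer harmonic block and then computing the resulting trace should yield, after algebraic simplification, exactly the product $\Delta(\bx)=ABCD$ in the statement. The principal obstacle is not conceptual but algebraic bookkeeping: tracking signs and normalizations through the Levi-Civita regularization, through the refraction/tilt at $\partial D$, and through the final trace formula so that the discriminant factorizes \emph{exactly} into the four announced terms. Once the factorization is established, the stability dichotomy follows from the equivalence $\mathrm{sgn}(\mathrm{tr}(M)^2-4)=\mathrm{sgn}(\Delta(\bx))$.
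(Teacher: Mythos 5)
Your plan is sound and would reach the theorem, but it follows a genuinely different formalism from the paper's. The paper does not compose fundamental matrices of the variational equations with a refraction block; it works with the broken-geodesic \emph{generating functions} $S_E(\xi_0,\tilde\xi)$ and $S_I(\tilde\xi,\xi_1)$ (Jacobi distances of the outer and inner arcs), encodes the concatenation and Snell's law as the stationarity system $\Phi=0$ of Section \ref{ssec: Jac 1}, and extracts $DF(\bar\xi,0)$ by the implicit function theorem from the second derivatives of $S_E$ and $S_I$. The actual analytic content is the same in both routes: one must solve the transverse Jacobi equations along the homotetic arcs ($f_0''=-\omega^2 f_0$ outside, $g_0''=\Omega^2 g_0$ in the Levi--Civita plane inside), and the curvature enters through $\ddot\gamma(\bx)$ exactly as your ``tilt of the normal,'' producing the perturbation terms $\varepsilon_E,\varepsilon_I\propto|\gamma(\bx)|k(\bx)-1$; the stability criterion is likewise identical, since with $\det DF=1$ the paper's discriminant $\Delta$ of the characteristic polynomial is precisely $\mathrm{tr}(DF)^2-4$. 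What the generating-function route buys is that symplecticity ($\det DF=1$) and the symmetric structure of the linearization come for free from the Lagrangian setup, whereas in your transfer-matrix approach you must separately verify that the linearized refraction block is area-preserving in the correct conjugate variables (position along $\partial D$ versus tangential momentum $\sqrt{V}\sin\alpha$, not the naive angle) before the trace criterion applies. Two details in your outline need care: the relation $M=M_{1/2}^2$ is not correct as stated for a reversible brake orbit --- the full monodromy is $\sigma M_{1/2}^{-1}\sigma M_{1/2}$ with $\sigma$ the reversal involution, although the trace can still be expressed through the half-period map --- and note that the paper's return map runs over one outer arc, one refraction, one inner arc and a second refraction, so your four ``elementary pieces'' must be assembled in that order rather than symmetrically about the collision. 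Neither point is an obstruction, only bookkeeping, and the factorization $\Delta=ABCD$ would emerge identically since both methods compute the same $2\times2$ Jacobian.
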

When $\partial D$ is an ellipse, the stability of the two homotetic orbits, which are parallel to the coordinate axes, can be studied explicitely in terms of the physical parameters of the problem and the eccentricity $0\leq e<1$ of the ellipse. By symmetry, only the homotetic orbits intersecting the positive directions of the axes, which we denote with $\bar{z}_0$ and $\bar{z}_{\pi/2}$, are considered. 
\begin{cor}
	If $\partial D$ is an ellipse, explicit expressions for $\Delta(0)$ and $\Delta(\pi/2)$ are provided in \eqref{delta01}, leading to the complete description, in terms of the physical parameters and of the eccentricity, of all stability regimes. \end{cor}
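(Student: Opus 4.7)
The corollary is essentially a computational specialization of Theorem \ref{intro thm 1} to the elliptic case, together with a sign analysis of the resulting expression. My plan is therefore to reduce the abstract formula for $\Delta(\bar\xi)$ to two explicit polynomial-like expressions in the physical parameters and the eccentricity $e$, and then to organize the resulting sign table.

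First, I would fix a parametrization of the ellipse adapted to the symmetry hypotheses already exploited in the paper, e.g.\ $\gamma(\xi)=(a\cos\xi,b\sin\xi)$ with $a\geq b>0$ and $e=\sqrt{1-b^2/a^2}$; by the scaling conventions of the paper one may normalize $a=1$, so that $b=\sqrt{1-e^2}$. The two homotetic directions correspond to $\bar\xi=0$ and $\bar\xi=\pi/2$, and one checks immediately that both satisfy \eqref{homcond}: each semiaxis is perpendicular to the curve at its endpoint, and the radial ray from the origin through such an endpoint meets $\partial D$ only at that point (for $a\geq b$ this is a straightforward convexity check). Hence Theorem \ref{intro thm 1} applies at both points.

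Next, I would feed into the formula for $\Delta(\bar\xi)$ the elementary geometric data
\begin{equation*}
|\gamma(0)|=1,\quad k(0)=\frac{1}{1-e^2},\quad |\gamma(\pi/2)|=\sqrt{1-e^2},\quad k(\pi/2)=\frac{\sqrt{1-e^2}}{1}\cdot\frac{1}{1}=(1-e^2)^{1/2}\cdot 1,
\end{equation*}
so that the key quantities in Theorem \ref{intro thm 1} become
\begin{equation*}
|\gamma(0)|k(0)-1=\frac{e^2}{1-e^2}\geq 0,\qquad |\gamma(\pi/2)|k(\pi/2)-1=-e^2\leq 0 .
\end{equation*}
Substituting these, together with the values of $V_I(\gamma(0))=\mathcal{E}+h+\mu$, $V_E(\gamma(0))=\mathcal{E}-\omega^2/2$, $V_I(\gamma(\pi/2))=\mathcal{E}+h+\mu/\sqrt{1-e^2}$, and $V_E(\gamma(\pi/2))=\mathcal{E}-\omega^2(1-e^2)/2$, produces the closed-form expressions for $\Delta(0)$ and $\Delta(\pi/2)$ referred to in \eqref{delta01}. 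This is the explicit formula promised by the corollary.

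The genuinely interesting part is the sign analysis, since $\Delta=ABCD$ is a product of four factors that may individually vanish or change sign as the parameters vary. The factor $A$ carries the geometric sign $|\gamma|k-1$, which is always nonnegative at $\bar\xi=0$ and always nonpositive at $\bar\xi=\pi/2$; the difference $\sqrt{V_I}-\sqrt{V_E}$ in $A$ is strictly positive under the standing assumptions $h,\mu>0$ and $2\E>\om$, so $A$ has a definite sign in each case. The factors $B$, $C$, $D$ are each polynomials in the fundamental variables $\sqrt{V_I}$, $\sqrt{V_E}$ (linear or quadratic), and my plan is to identify for each of them the threshold curves in the parameter space $(\E,h,\mu,\om,e)$ on which they vanish, and to tabulate the resulting sign combinations. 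For $\bar\xi=0$ the sign of $A$ is fixed, so the stability transitions are driven solely by $B,C,D$; for $\bar\xi=\pi/2$ one additionally has the vanishing of $A$ at $e=0$, recovering the well-known integrable (circular) marginal case. Assembling these sign tables gives the complete description of the stability regimes as a function of the physical parameters and of $e$, which is the content of the corollary.

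The main obstacle I expect is not conceptual but bookkeeping: the factor $C$ and especially $D$ are quadratic in $\sqrt{V_I}$ and couple $\mu$ with the curvature term, so isolating clean threshold conditions requires care to avoid extraneous branches. I would systematically reduce everything to the two dimensionless quantities $\sqrt{V_I/V_E}$ and $|\gamma|\sqrt{V_I}/\mu$ before drawing the bifurcation diagram.
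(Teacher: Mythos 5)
Your proposal is correct and follows essentially the same route as the paper: the geometric data $|\gamma(k\pi/2)|$, $k(k\pi/2)$ and the potential values you list are exactly those underlying \eqref{delta01} (the paper re-derives the second derivatives of $S_E,S_I$ in the trigonometric parametrization via \eqref{ellisse0}--\eqref{ellisse90} rather than substituting into the arc-length formula of Theorem \ref{intro thm 1}, but the two computations coincide since $\Delta$ depends only on $|\gamma|$ and the curvature). For the sign analysis, the paper likewise factors $\Delta^{(0)},\Delta^{(1)}$ and identifies threshold curves, carrying out the analysis explicitly only in the asymptotic regimes $e\to0$, $h,\mu\to\infty$, $\E\to\infty$ (Proposition \ref{eccpiccola}) and numerically otherwise, which is consistent with the programme you outline.
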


	As the expressions of the $\Delta(0)$ and $\Delta(\pi/2)$, though explicit, include the many different parameters in a rather intricated formula,  the  general study of their sign can be ardous and we shall perform it numerically in general and analytically in some specific regimes; indeed, an asymptotic analysis for $e\to0$ can be done, leading to a rather simple stability criterion for small eccentricities. In particular, we have the following result for small eccentricities:
	
	\begin{cor}
If $\frac{\sqrt{\E+h+\mu}}{\mu}<\frac{\sqrt{2\E-\om}}{2\sqrt{2}\E}$, then, for small eccentricities, 
 $\bar{z}_0$ is stable and $\bar{z}_{\pi/2}$ is unstable.  Symmetrically, if $\frac{\sqrt{\E+h+\mu}}{\mu}>\frac{\sqrt{2\E-\om}}{2\sqrt{2}\E}$,  then, for small eccentricities. 
$\bar{z}_0$ is unstable and $\bar{z}_{\pi/2}$ is stable.  
\end{cor}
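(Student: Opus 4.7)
The plan is to apply Theorem \ref{intro thm 1} directly to the ellipse and carry out an asymptotic expansion of the discriminant $\Delta(\bar\xi)$ in the eccentricity $e$. Parametrize $\partial D$ by $\gamma(\xi)=(\cos\xi,\sqrt{1-e^2}\sin\xi)$, so that $\gamma(0)$ and $\gamma(\pi/2)$ lie on the positive coordinate axes and satisfy the orthogonality condition \eqref{homcond}(i); condition \eqref{homcond}(ii) also holds since the ellipse is convex. Hence the theorem applies at both points and stability of $\bar z_0,\bar z_{\pi/2}$ is governed by the sign of $\Delta(0)$ and $\Delta(\pi/2)$.

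The first step is to collect the purely geometric quantities: at $\bar\xi=0$ one has $|\gamma|=1$ and $k=1/(1-e^2)$, so $|\gamma(0)|k(0)-1=e^2/(1-e^2)$; at $\bar\xi=\pi/2$ one has $|\gamma|=\sqrt{1-e^2}$ and $k=\sqrt{1-e^2}$, so $|\gamma(\pi/2)|k(\pi/2)-1=-e^2$. Thus the factor $|\gamma|k-1$ appearing in $A$ (and in the $e^2$ corrections to $B,D$) vanishes to order $e^2$ and, crucially, carries opposite signs at the two homotetic orbits. Simultaneously, $V_I(\gamma(\bar\xi))\to \E+h+\mu$ and $V_E(\gamma(\bar\xi))\to \E-\omega^2/2$ as $e\to 0$, at both $\bar\xi=0$ and $\bar\xi=\pi/2$, and the convergence is smooth in $e$.

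The second step is to expand each factor $A,B,C,D$ to leading order. Writing $V_I^0=\E+h+\mu$ and $V_E^0=\E-\omega^2/2$, one obtains as $e\to 0$:
\begin{equation*}
A=\frac{16(\sqrt{V_I^0}-\sqrt{V_E^0})}{\E^2\mu^2}\bigl(|\gamma|k-1\bigr)+O(e^4),\quad B=\E+O(e^2),\quad D=\mu+O(e^2),
\end{equation*}
\begin{equation*}
C=2\E\sqrt{V_I^0}-\mu\sqrt{V_E^0}+O(e^2).
\end{equation*}
Since $\sqrt{V_I^0}>\sqrt{V_E^0}$, $B\to\E>0$ and $D\to\mu>0$ are bounded away from zero, the sign of $\Delta(\bar\xi)=ABCD$ for small $e>0$ is that of $(|\gamma|k-1)\cdot C$, i.e.\ opposite at $\bar\xi=0$ and $\bar\xi=\pi/2$, and the common sign is dictated by $C$ evaluated at $e=0$.

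The third step is the algebraic identification of the threshold. The inequality $C|_{e=0}>0$ reads $2\E\sqrt{\E+h+\mu}>\mu\sqrt{\E-\omega^2/2}$; using $\sqrt{\E-\omega^2/2}=\sqrt{2\E-\omega^2}/\sqrt{2}$, this is exactly $\sqrt{\E+h+\mu}/\mu>\sqrt{2\E-\omega^2}/(2\sqrt{2}\E)$. Under this inequality, $\Delta(0)>0$ (so $\bar z_0$ is linearly unstable) and $\Delta(\pi/2)<0$ (so $\bar z_{\pi/2}$ is linearly stable) for all sufficiently small $e$; the reverse inequality exchanges the conclusions. The argument is essentially a sign bookkeeping once the two geometric asymptotics $|\gamma(0)|k(0)-1\sim +e^2$ and $|\gamma(\pi/2)|k(\pi/2)-1\sim -e^2$ are in hand; the only point requiring slight care is verifying that $B,D$ stay positive in the regime considered, which follows immediately since both tend to strictly positive constants as $e\to 0$. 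No genuine obstacle is expected: the result is a leading-order consequence of Theorem \ref{intro thm 1} combined with the elementary $e\to 0$ expansion of the ellipse's curvature.
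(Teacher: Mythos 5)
Your argument is correct and follows essentially the same route as the paper's proof (Section \ref{ssec: ecc piccola}, leading to Proposition \ref{eccpiccola}(I)): a Taylor expansion of the discriminant in $e$, whose leading $e^2$ coefficient is proportional to $\left(\sqrt{V_I^0}-\sqrt{V_E^0}\right)\left(2\E\sqrt{V_I^0}-\mu\sqrt{V_E^0}\right)$ with opposite signs at the two axes, so that the threshold is exactly $2\E\sqrt{\E+h+\mu}=\mu\sqrt{\E-\om/2}$. Your factor-by-factor bookkeeping via the general formula of Theorem \ref{intro thm 1} (with $|\gamma(0)|k(0)-1=e^2/(1-e^2)$ and $|\gamma(\pi/2)|k(\pi/2)-1=-e^2$) reproduces the paper's coefficients $f_2=-g_2$ up to an irrelevant positive constant, and all sign determinations agree.
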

	
A similar asymptotic analysis, which holds for arbitrary eccentricities, can be performed for high values of $h$ or $\mu$ and $\E$ (see Proposition \ref{eccpiccola}): fixing all the parameters but $h$ (resp. $\mu$), if $h$ (resp. $\mu$) is large enough, both $\bar{z}_0$ and $\bar{z}_{\pi/2}$ are unstable homotetic orbits. In such cases, with the additional hypothesis of a good definition of the dynamics on the whole ellipse, we can infer the existence of an intermediate non-homotetic  stable periodic orbit with exactly two distinct crossings of $\partial D$. Furthermore, if $\E$ is large enough, one has that $\bar{z}_{\pi/2}$ is unstable, while the stability of $\bar{z_{0}}$ is determined by the value of $\mu$, in the sense that there is a theshold value $\bar{\mu}(\omega, h, e)$ such that if $\mu<\bar{\mu}$ $\bar{z}_{0}$ is unstable, while if $\mu>\bar{\mu}$ its stability is reversed. 

The differentiable dependence of the stabilty with respect to the physical parameters leads naturally to bifurcation phenomena whenever a variation of any of $\E, h, \mu$ or $\omega$ determines a change in the sign of $\Delta$: Section \ref{ssec: numerics} provides concrete examples of such transitions. \\
 The second class of periodic orbits on which this work is focused is represented by the free-fall two-periodic orbits, where two homotetic outer arcs are connected by an inner Keplerian hyperbola: Theorem \ref{intro thm 2} provides a sufficient condition for their existence in the elliptic case. 
 \begin{thmv}\label{intro thm 2}
 	Suppose that $\partial D$ is an ellipse with eccentricity $e\in(0,1/\sqrt{2})$, and consider $\E>0,\omega>0$ such that $2\E>\om$. Then there are $\bar{\mu}=\bar{\mu}(\E,\omega, e)$ and $\bar{h}=\bar{h}(\E,\omega,e,\mu)$ such that, if $\mu>\bar{\mu}$ and $h>\bar{h}$, then the dynamics admits at least two nontrivial free-fall brake orbits of period two. 
 \end{thmv}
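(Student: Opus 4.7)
\emph{Symmetric ansatz.}
The strategy is a shooting argument exploiting the two reflection symmetries of the ellipse and of the radial potentials $V_I,V_E$. The refraction Hamiltonian flow commutes with both coordinate reflections and is time-reversible, so any inner Keplerian arc that crosses one of the coordinate axes (say, the $y$-axis) orthogonally at an interior point of $D$ extends, by reflection and time-reversal, to a closed trajectory meeting $\partial D$ at two mirror points $\mathbf p_1=\gamma(\xi)$ and $\mathbf p_2=(-p_{1,x},p_{1,y})$. If moreover Snell's refraction at $\mathbf p_1$ of the outer (radial inward) velocity matches the Cauchy datum of that inner arc, the construction yields a closed brake orbit whose two outer segments are homotetic radial arcs running from $\partial D$ to the Hill boundary and back, i.e.\ precisely a free-fall brake orbit of period two. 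The symmetric ansatz makes the matching condition at $\mathbf p_2$ automatic, so the problem reduces to finding $\xi\in(0,\pi/2)$ for which the inner Kepler trajectory emanating from $\gamma(\xi)$ (with Snell-refracted initial velocity from the outer radial) crosses the $y$-axis orthogonally.

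\emph{Shooting function and IVT.}
For $\xi\in(0,\pi/2)$ the prescribed inward outer velocity at $\gamma(\xi)$ is $\mathbf v_E(\xi)=-\sqrt{2V_E(\gamma(\xi))}\,\gamma(\xi)/|\gamma(\xi)|$, and Snell's law fixes the refracted inner datum $\mathbf v_I(\xi)$ via preservation of the tangential component along $\partial D$ and $|\mathbf v_I|^2=2V_I(\gamma(\xi))$. The Cauchy problem for the Kepler equation in $D$, regularised \`a la Levi-Civita near collision, has a unique solution $\zeta_\xi$; setting $\tau(\xi)=\min\{t>0:\zeta_\xi^{(x)}(t)=0\}$, define
\[
F(\xi)\;:=\;\dot\zeta_\xi^{(y)}(\tau(\xi)),
\]
so that $F(\xi^*)=0$ is equivalent to the construction of a $y$-axis-symmetric brake orbit crossing $\partial D$ at $\gamma(\xi^*)$ and its mirror. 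The map $F$ is continuous on $(0,\pi/2)$ by continuous dependence of the regularised Kepler flow on initial data, and the bound $e<1/\sqrt 2$ guarantees that $\tau(\xi)$ is finite and that $\zeta_\xi$ does not exit $D$ before meeting the $y$-axis. As $\xi\to 0^+$ (respectively $\xi\to (\pi/2)^-$), $\zeta_\xi$ degenerates onto the homotetic arc $\bar z_0$ (respectively $\bar z_{\pi/2}$); expanding $F$ near these two endpoints in terms of the Kepler invariants, namely the angular momentum $L(\xi)=\gamma(\xi)\wedge\mathbf v_I(\xi)$, the energy $\E+h$, and the hyperbola's eccentricity $e_K(\xi)^2=1+2(\E+h)L(\xi)^2/\mu^2$, one computes the boundary values $F(0^+)$ and $F((\pi/2)^-)$ explicitly in $\E,h,\mu,\omega,e$. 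The quantities $\bar\mu(\E,\omega,e)$ and $\bar h(\E,\omega,e,\mu)$ are then defined as the thresholds past which the two limits acquire opposite signs, so that the intermediate value theorem delivers a $\xi^*\in(0,\pi/2)$ with $F(\xi^*)=0$, yielding a first nontrivial brake orbit.

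\emph{Second orbit and main obstacle.}
An identical argument with the $x$-axis as symmetry axis produces a second shooting function $G$ and, under the same parameter inequalities $\mu>\bar\mu$ and $h>\bar h$, a zero $\eta^*\in(0,\pi/2)$ giving a period-two brake orbit invariant under $(x,y)\mapsto(x,-y)$. The two orbits belong to distinct $\mathbb Z_2$-symmetry classes of the ellipse and are therefore geometrically distinct, proving ``at least two''. The technical heart of the proof is the asymptotic analysis of $F$ (and $G$) at their degenerate endpoints, where $\zeta_\xi$ approaches a collision arc along one of the coordinate axes: matching the first-order Taylor expansion of the Snell-refracted velocity against the Kepler deflection angle $\theta_d(\xi)=2\arcsin(1/e_K(\xi))$ is what yields the explicit expressions for $\bar\mu$ and $\bar h$. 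The hypothesis $e<1/\sqrt 2$ enters precisely in this matching, as it bounds the angle between the radial direction and the outward normal to $\partial D$ uniformly in $\xi\in(0,\pi/2)$, thereby preventing degenerate (total-reflection or non-transversal) refraction behaviour along the shooting family.
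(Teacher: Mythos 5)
Your route is genuinely different from the paper's. The paper does not impose a reflection symmetry on the unknown orbit: it introduces a \emph{free-fall map} $\delta(\theta)$, the angle between the refracted outgoing velocity and the radius at the \emph{exit point on} $\partial D$ after one homotetic outer arc and one inner crossing. Then $\delta(0)=\delta(\pi/2)=0$ (the homotetic directions), the map is shown to be globally defined and continuous on the ellipse, $\delta'(0)$ and $\delta'(\pi/2)$ are computed in closed form by the implicit function theorem from the Levi--Civita-regularised second derivatives of the inner Jacobi distance, and $\bar\mu,\bar h$ are chosen so that both derivatives are strictly positive; since $\delta$ vanishes at both endpoints of $(0,\pi/2)$, this forces an interior zero, i.e.\ a nontrivial brake orbit. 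Your symmetric closing-up idea (an inner arc meeting a coordinate axis orthogonally extends by reflection and time reversal to a period-two brake orbit, and orbits symmetric about distinct axes coincide only in the homotetic case) is sound in principle, but your reduction has two concrete gaps.

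First, the step you yourself call the technical heart --- evaluating $F(0^+)$ and $F((\pi/2)^-)$ and arranging opposite signs --- is not only left undone but is problematic as posed. As $\xi\to0^+$ the Snell-refracted inner datum carries angular momentum $O(\xi)$, the Kepler arc collapses onto the ejection--collision segment along the positive $x$-axis, and its first intersection with the $y$-axis (the tip of a hairpin of pericenter distance $O(\xi^2)$ wrapping the focus) converges to the origin, where the physical velocity diverges; symmetrically, as $\xi\to(\pi/2)^-$ the limiting arc lies entirely on the $y$-axis. Hence $F(\xi)=\dot\zeta^{(y)}_\xi(\tau(\xi))$ has no evident finite limit at either endpoint, and an intermediate value argument cannot be run on boundary values that are not shown to exist. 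This is precisely the degeneracy the paper's formulation avoids: $\delta$ is evaluated on $\partial D$, away from the singularity, vanishes identically at the homotetic directions, and the sign information is extracted from its finite first derivatives there, the collision having been dealt with once and for all through the Levi--Civita computation of $\partial^2 S_I$.

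Second, the intermediate value theorem needs $F$ defined and continuous on all of $(0,\pi/2)$, hence $\tau(\xi)<\infty$ with the crossing occurring before the arc returns to $\partial D$; a Keplerian hyperbola entering at $\gamma(\xi)$ can perfectly well enter and exit on the same side of the $y$-axis when the refracted angular momentum is not small, and nothing in your argument excludes this for intermediate $\xi$. Relatedly, you misattribute the role of $e<1/\sqrt2$: the centred ellipse is transversal to every ray for all $e<1$ (indeed $\gamma\wedge\dot\gamma\equiv b$), so no bound on incidence angles comes from this hypothesis. In the paper it is a curvature comparison guaranteeing that $D$ is convex for hyperbol\ae, i.e.\ every Keplerian hyperbola of the given energy meets $\partial D$ in at most two points, which is what makes the inner dynamics --- and hence the free-fall map --- globally well defined. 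Without a globally defined, nondegenerate shooting function and an actual asymptotic computation at its endpoints, the existence claim is not established.
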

 
 Nontrivial here stands for non homotetic. The existence of this type of periodic orbits on the ellipse is a significant fact, which distinguishes the stricly elliptic case, namely, with $e\neq0$ from the circular case: we have indeed that, while in the latter there are infinitely many homotetic orbits, there is no possibility to have a nontrivial free-fall two periodic trajectory. \\
 As in the case of Theorem \ref{intro thm 1}, also Theorem \ref{intro thm 2} admits an extension for general curves which share with the ellipse a common behaviour near to the homotetic orbits up to the second order and a particular type of global convexity property with respect to the hyperbol\ae. In particular, we shall define a class of boundaries $\gamma$ for which the inner arcs are globally well defined. 
 \begin{defn}
 	We say that the domain $D$ is \textbf{convex for hyperbol\ae for fixed } $\boldsymbol{h}, \boldsymbol{\E}$\textbf{ and} $\boldsymbol{\mu}$ if every Keplerian hyperbola with energy $\Eh$ and central mass $\mu$ intersects $\partial D$ at most in two points. \\
 	The domain $D$ is \textbf{convex for hyperbol\ae} if the previous condition holds for every positive $\E, h$ and $\mu$. 
 \end{defn}

Let us observe that Theorem \ref{intro thm 2} remains true when $D$ is convex for hperbol\ae, \textcolor{black}{is everywhere transverse with the radial direction}  and $\partial D=supp(\gamma)$, $\gamma\in C^2([a,b])$ and $\gamma(\xi)=\gamma_0(\xi)+\gamma_1(\xi)$, where $\gamma_0$ parametrises the ellipse and $\gamma_1$ is such that has the same symmetry of the ellipse and 
	\begin{equation*}
		\gamma_1(k\pi/2)=\dot{\gamma}_1(k\pi/2)=\ddot{\gamma}_1(k\pi/2)=(0,0)
	\end{equation*} 
for $k=0,1,2,3$. \\
In the case of the ellipse, our analytical study is enriched by a numerical investigation, presented in Section \ref{ssec: numerics}, where the behaviour of the dynamics in different cases of interest is described. Of special interest is the evidence, in particular conditions, of diffusive \textcolor{black}{orbits}, even for very small eccentricities (i. e., near to the circular case, which is integrable), which is a strong sign of chaotic behaviour. \\
This work is organized as follows: \S \ref{sec:variational} recalls the basics for the variational approach to the problem and states Snell's law. In \S \ref{sec:existence} we analyse existence of outer an inner arcs, while \S \ref{sec:first_return} is devoted to the construction of a local first return map, close to homotetic ejection-collision trajectories. The stability of such orbits is the object of  \S 
\ref{sec:general_stab} for general domains. In \S \ref{sec:alliptic_domains} we deepen and complete the elliptic case, with an special emphasis on the existence of period one and period two (brake) orbits. \S \ref{ssec: numerics} presents many numerical results for the elliptic case.

\section{Preliminaries and notations}\label{sec:variational}
 
Most of the analytical techniques used to investigate the dynamics described in \S \ref{sec: model} rely on the variational structure. Consider a fixed-ends problem of the type 
\begin{equation}\label{general prob}
	\begin{cases}
		z''(s)=\nabla V(z(s))\quad&s\in[0,T]\\
		\frac{1}{2}|z'(s)|-V(z(s))=0&s\in[0,T]\\
		z(0)=z_0, z(T)=z_1
	\end{cases}
\end{equation}
with $z_0,z_1$ in a suitable subset of $\R^2$, and $V(z)$ a generic potential such that $V(z)>0$ a.e. As we will take advantage of regularisation techniques to deal with the inner potential, we can suppose that $V(z)$ is regular, as well as the solution of (\ref{general prob}). \\
In particular, the variational approach will be crucial for the analysis of the linearized problem around the homotetic solutions and for the determination of a suitable refraction law which could connect inner and outer arcs. \\
This Section is devoted to recollect the main definitions we shall use, along with the principal results. An extensive discussion of the topic and the following results can be found in Appendix \ref{sec:appA} and in \cite{ST2012}. 

\subsection{Maupertuis functional and Jacobi length}

\begin{defn}
	Given $z_0, z_1\in \R^2$, we denote with $M([0,1],z(t))$ the Maupertuis functional
	
	\begin{equation*}
		M(z)=M([0,1],z(t))=\int_0^1|\dot{z}(t)|^2V(z(t))dt
	\end{equation*}
which is defined on the set
	\begin{equation*}
		H_{z_0,z_1}=\{z(t)\in H([0,1],\R^2)\text{ }|\text{ }z(0)=z_0, z(1)=z_1\}.  
	\end{equation*}
Furthermore, the Jacobi length is given by 
\begin{equation*}
	L(z)=L([0,1],z(t))=  \int_0^1|\dot{z}(t)|\sqrt{V(z(t))}dt, 
\end{equation*}
and is defined on the closure of $H_0=\{z(t)\in H_{z_0,z_1}\text{ }|\text{ }\forall t\in[0,1] \text{ }|\dot{z}(t)|>0, V((z(t)))>0 \}$ in the weak topology of $H^1([0,1],\R^2)$. 

\end{defn}
If $\alpha(t)\in H_0$, $L(\alpha)$ represents the length of $\gamma$ in the Jacobi metric defined by $g_{ij}=V(x)\delta_{ij}$, while the functional $M$ is differentiable in $H^1([0,1],\R^2)$, and, as Proposition \ref{prop Mau} shows, its critical points at positive levels are reparametrisations of solutions of (\ref{general prob}). \\
The relation between $L$ and $M$ is given as follows: if $z(t)$ is a solution of a suitable reparametrised problem of (\ref{general prob}), given explicitely in Proposition \ref{prop Mau}, then $L^2(z)=2M(z)$, and then finding critical points of $M(z)$ at a positive level is equivalent to finding critical points of $L(z)$ (see Remark \ref{L^2=2M}).\\
By comparing problem (\ref{general prob}) with the definition of $H_{z_0,z_1}$, it is clear that in order to apply the properties of the Jacobi length to find the solutions $z(s)$ of (\ref{general prob}), one needs a suitable reparametrisation $t=t(s)$ such that $t(0)=0$ and $t(T)=1$. 
\begin{defn}
	We define: 
	\begin{itemize}
		\item the \textbf{geodesic time} $t\in[0,1]$ as the time parameter to be used in $M(z(t))$ and $L(z(t))$; 
		\item the \textbf{cinetic time} $s\in [0,T]$ (later on, without loss of generality, we will consider $s\in[-T,T]$) as the physical time parameter through which $z(s)=z(t(s))$ solves (\ref{general prob}).  
	\end{itemize}
We denote with $\dot{ }=d/dt$ and $'=d/ds$ respectively the derivatives with respect to the geodesic and the cinetic time.

\end{defn}  
The relation between the geodesic and the cinetic time is proved in Remark \ref{oss tempi} and is given by 
\begin{equation*}
	\frac{d}{dt}=\frac{L}{\sqrt{2}V(z(t(s)))}\frac{d}{ds}, 
\end{equation*}
where $L=|\dot{z}(t)|\sqrt{V(z(t))}$ is constant along the solutions of (\ref{general prob}). 

\subsection{Generalized Snell's law}\label{subs:snell}
One of the crucial steps of the broken geodesics method goes by the determination of a junction law between the different arcs. In our case, a local minimization argument in a neighborhood of the interface between the inner and outer region, represented by the domain's boundary $\partial D$, demonstrates the validity of a refraction law which turns out to be a generalization for curved interfaces and geodesics of the classical Snell's law. \\
In particular, let us consider $z_E(t)$ the solution of problem (\ref{general prob}) with $V=V_E$ and fixed ends $z_0^{(E)}$ and $z_1^{(E)}$, and $z_I(t)$ the analogous for $V=V_I$ and fixed ends $z_0^{(I)}$ and $z_1^{(I)}$, with $z_1^{(E)}=z_0^{(I)}=\bar{z}$, where both the arcs are parametrised by the geodesic time.
Then, denoting with $e$ the unit vector tangent to $\partial D$ in $\bar{z}$, one has (see Section \ref{ssec: appA2} for details) 
\begin{equation}\label{legge snell}
	\sqrt{V_E(\bar{z})}\frac{\dot{z_E}(1)}{|\dot{z_E}(1)|}\cdot e=\sqrt{V_I(\bar{z})}\frac{\dot{z_I}(0)}{|\dot{z_I}(0)|}\cdot e. 
\end{equation}
The refraction rule governing the transition from the inner to the outer region of $D$ is completely analogous. \\
The geometric interpretation of (\ref{legge snell}) can be found as follows: denoting with $\alpha_E$ and $\alpha_I$ respectively the angles of $\dot{z}_E(1)$ and $\dot{z}_I(0)$ (or, equivalently, $\dot{z}_I(1)$ and $\dot{z}_E(0)$) with the external normal unit vector to $\partial D$ in $\bar{z}$, we have 
\begin{equation}\label{snell law def}
	\sqrt{V_I({\bar{z}})}\sin{\alpha_I}=\sqrt{V_E({\bar{z}})}\sin{\alpha_E}.
\end{equation}
Relation (\ref{snell law def}) can be rephrased as the conservation of the tangent component of the velocity vector through the interface. 
\begin{rem}\label{rifl totale}
	As $\forall\bar{z}\in\mathbb{R}^2$ $V_E(\bar{z})<V_I(\bar{z})$,  the equation 
	\begin{equation*}
		\alpha_I=\arcsin{\left(\sqrt{\frac{V_E(\bar{z})}{V_I(\bar{z})}}\sin{\alpha_E}\right)}
	\end{equation*}
is always solvable in the domain $[-\frac{\pi}{2},\frac{\pi}{2}]$. Viceversa, the crossing from the interior to the exterior of the domain may encounter an obstruction: indeed, the equation 
	\begin{equation*}
		\alpha_E=\arcsin{\left(\sqrt{\frac{V_I(\bar{z})}{V_E(\bar{z})}}\sin{\alpha_I}\right)}
	\end{equation*}
	admits a solution if and only if $\Big\vert\sqrt{\frac{V_I(\bar{z})}{V_E(\bar{z})}}\sin{\alpha_I}\Big\vert\leq1$: in order to guarantee 
the solvability, we  define a \textit{critical angle}, depending on $\bar{z}$ and, as parameters, on the proper quantities of the problem $\mathcal{E}, h, \mu, \omega$, that is
	\begin{equation*}
		\alpha_{I,crit}=\arcsin{\left(\sqrt{\frac{V_E(\bar{z})}{V_I(\bar{z})}}\right)}.
	\end{equation*}
In this way, the passage from the inside to the outside of the domain $D$ takes place as long as  $\alpha_I\in[-\alpha_{I,crit},\alpha_{I,crit}]$.   
\textcolor{black}{We stress that, for $|\alpha_I|=\alpha_{I,crit}$, the refracted outer arc turns out to be tangent to $\partial D$. }
\end{rem}


\section{Local existence of inner and outer arcs: a transversality approach}\label{sec:existence}


This section is devoted to state existence of outer and inner arcs, close to the homotetic ones, which will be next used to apply a broken geodesics technique. We shall use a classical transversality  approach, reminiscent to the one in \cite{ST2012}, to which we refer for a more detailed exposition. It is worthwhile stressing that, when dealing with the inner dynamics, we shall take advantage of Levi-Civita regularising transformation. \\
For the sake of brevity, in the present Section only the main results, whose application is crucial for the construction of a suitable first return map, see Section \ref{sec:first_return}, are presented; a more extensive discussion is postponed to Appendix \ref{appB}. \\
From now on, we will always suppose that $D$ is contained in the Hill's region 
\begin{equation*}
	\mathcal{H}=\left\{p\in\R^2\text{ }\Bigg|\text{ }\sqrt{\E-\frac{\om}{2}|p|^2}>0\right\}, 
\end{equation*}
and assume that its boundary $\partial D$ is parametrised as the trace of a curve $\gamma: I= [0,2\pi]\mapsto\mathbb{R}^2 $, with $\gamma\in C^2$. We focus on the points of $\gamma$ which satisfy a local  transversality property, as well as a local star-convexity, namely:
\begin{equation}\label{cond Hom}
	\begin{aligned}
	\bx\in I \text{ such that}:\text{ } &(i)\text{ }\gamma{(\bx)}\nparallel\dot{\gamma}{(\bx)}\\
	 &(ii)\text{ }\text{defined }s=t\gamma{(\bx)},\text{ } t\in[0,\infty),\text{ } supp(s)\cap\partial D=\{\gamma{(\bx)}\}. 
	\end{aligned}
\end{equation}
As our main interest lies in the local study of the trajectories around the homotetic solutions, \textcolor{black}{whose directions are in a subset of the ones identified by (\ref{cond Hom})}, we restrict our analysis to a neighborhood of $\bx$: condition (\ref{cond Hom}), along with the regularity of $\gamma$, assures indeed the existence of an open interval $I'\subset I$ such that $\bx\in I'$ and
\begin{equation}\label{local trans}
\forall \xi\in I'\text{ }	\gamma(\xi)\nparallel\dot{\gamma}{(\xi)}.
\end{equation} 
 Furthermore, possibly taking a smaller $I'$, we can suppose that condition (\ref{cond Hom}(ii)) holds for every $\xi\in I'$. The local transversality property of $supp(\gamma(I'))$ with respect to the radial directions and its star-convexity with reference to the origin will be the main ingredients to guarantee the existence of the inner and outer arcs in a neighborhood of an homotetic solution.

\begin{thmv}\label{thm esistenza ext}
	Suppose that the domain's boundary $\partial D$ is a regular  curve parametrised by $\gamma:I\rightarrow\mathbb{R}^2$ and suppose that $\bx\in I$ satisfies (\ref{cond Hom}). Then  there are $\epsilon_\alpha(\bar {\xi})>0$ and $\epsilon_{\x0}(\bar{\xi})>0$ such that for every $\x0\in I$, $\alpha\in[-\pi/2, \pi/2]$ with $\vert\bx-\x0\vert<\epsilon_{\x0}(\bx)$ and  $\vert\alpha\vert<\epsilon_\alpha(\bar{\xi})$, there exist $T>0, \xi_1\in I$ such that the problem (in complex notation $\gamma(\bar{\xi})=\vert\gamma(\bar{\xi})\vert e^{i\bar{\theta}}$)
	\begin{equation*}
		\begin{cases} 
			y''(s)=-\omega^2 y(s)\\
			\frac{1}{2}\vert y'(s)\vert^2-\mathcal{E}+\frac{\omega^2}{2}\vert y(s)\vert^2=0 \\ 
			y(0)=\gamma(\x0), y'(0)=v_0 e^{i(\bar{\theta}+\alpha)}, 
		\end{cases}
	\end{equation*}
	with $v_0=\sqrt{2\mathcal{E}-\omega^2\vert\gamma(\x0)\vert^2}$, admits the unique solution $y(s; \x0, \alpha)$ and $y(T; \bar{\xi}, \alpha)=\gamma(\xi_1)\in\partial D$.\\
	Moreover, for every $s\in(0,T)$ $y(s;\x0,\alpha)\notin\bar{D}$. 
\end{thmv}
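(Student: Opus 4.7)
The plan is to exploit the explicit form of the outer harmonic dynamics and reduce the question to an application of the implicit function theorem around the base case $\x0=\bx$, $\alpha=0$. Since the ODE $y''=-\omega^2 y$ is linear, the Cauchy problem admits the unique closed-form solution
\begin{equation*}
y(s;\x0,\alpha) \;=\; \gamma(\x0)\cos(\omega s) \;+\; \frac{v_0}{\omega}\, e^{i(\bar\theta+\alpha)}\sin(\omega s),
\end{equation*}
where $\gbx = |\gbx|e^{i\bar\theta}$. Well-posedness and the energy constraint are then automatic. What remains is to produce a first positive return time $T=T(\x0,\alpha)$ and a landing parameter $\xi_1=\xi_1(\x0,\alpha)$ with $y(T;\x0,\alpha)=\gamma(\xi_1)$, and to verify that the arc does not re-enter $\bar D$ in between.

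I would first settle the base case. The trajectory $y(s;\bx,0)$ is a pure radial oscillation along the direction $e^{i\bar\theta}$, returning to $\gbx$ at the explicit time $T_0 = (2/\omega)\arctan(v_0/(\omega|\gbx|))>0$ with inward radial velocity $-v_0 e^{i\bar\theta}$. Condition \eqref{cond Hom}(ii) forces the ray $\{t\gbx:t>0\}$ to meet $\partial D$ only at $\gbx$, so $y(s;\bx,0)\notin\bar D$ for every $s\in(0,T_0)$. Next I would apply the implicit function theorem to
\begin{equation*}
F(s,\xi,\x0,\alpha) \;:=\; y(s;\x0,\alpha) \;-\; \gamma(\xi)
\end{equation*}
at its zero $(T_0,\bx,\bx,0)$. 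The $(s,\xi)$-Jacobian there is the $2\times 2$ matrix whose columns are $y'(T_0;\bx,0)=-v_0 e^{i\bar\theta}$ and $-\dot\gamma(\bx)$; by the local transversality \eqref{cond Hom}(i) these two vectors are linearly independent, so the Jacobian is invertible. The IFT then yields smooth functions $T(\x0,\alpha)$ and $\xi_1(\x0,\alpha)$ on a neighbourhood of $(\bx,0)$, from which the constants $\epsilon_{\x0}(\bx)$ and $\epsilon_{\alpha}(\bx)$ are read off.

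The delicate step, and the one I expect to be the main obstacle, is upgrading the IFT output — which only guarantees one zero of $F$ close to $(T_0,\bx)$ — to the exterior condition $y(s;\x0,\alpha)\notin\bar D$ on the entire open interval $(0,T(\x0,\alpha))$. I would handle this by splitting $(0,T_0)$ into a compact interior window $[\delta,T_0-\delta]$, where the base arc has strictly positive distance from $\bar D$ by star-convexity and compactness, and two short endpoint windows $(0,\delta)$ and $(T_0-\delta,T_0)$, where the base trajectory crosses $\partial D$ transversally (since by \eqref{cond Hom}(i) the radial direction is not tangent to $\partial D$, the initial and final normal velocities are nonzero). Uniform continuity of $y$ in $(\x0,\alpha)$ preserves the positive-distance estimate on the interior window, while persistence of transversal entry/exit secures the endpoint windows, in both cases after possibly shrinking $\epsilon_{\x0}$ and $\epsilon_\alpha$. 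Combining the three pieces gives the full claim.
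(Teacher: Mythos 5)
Your proposal is correct and follows essentially the same route as the paper: the paper also establishes the local Poincar\'e section by a transversality/implicit-function-theorem argument at the radial brake solution (Lemma \ref{lemma esistenza outer app}, using the linear independence of the radial velocity and $\dot\gamma(\bx)$ guaranteed by \eqref{cond Hom}(i)), handles the non-re-entry into $\bar{D}$ by continuous dependence on initial data together with \eqref{cond Hom}(ii) (Remark \ref{rem appB}), and then converts the smallness condition on the transverse velocity component into one on the angle $\alpha$. Your explicit treatment of the two endpoint windows via persistence of transversal crossing is in fact slightly more careful than the paper's appeal to $C^0$-closeness alone, but it is the same argument in substance.
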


In the case of the inner arcs, we turn to the problem 
\begin{equation}\label{inprob}
	\begin{cases}
		z''(s)=-\frac{\mu}{|z(s)|^3}z(s), \quad &s\in[0,S], \\
		\frac{1}{2}|z'(s)|^2-\mathcal{E}-h-\frac{\mu}{|z(s)|}=0, &s\in[0,S],\\
		z(0)=z_0, z'(0)=\mathbf{v}_0; 
	\end{cases}
\end{equation}
for some $S>0$ and some initial conditions $z_0\in\partial D$ and $\mathbf{v}_0$ pointing inward the domain $D$, and denote with $z(s;z_0,\mathbf{v}_0)$ its solution (with an abuse of notation, in the following the initial velocity will be defined either by its angle with the radial direction or its orthogonal component to the latter). As the Keplerian \textcolor{black}{orbits} with positive energy are unbounded and $D$ is bounded, for every initial condition for which the arc enters in $D$ there is $\tilde{S}>0$ such that the it encounters $\partial D$ again in a point which we call $z_1$. We search for constraints for $z_0,\mathbf{v}_0$ such that $\dot{z}(\tilde{S}; z_0, \mathbf{v}_0)$ is transverse to $\partial D$. \\
The singularity at the origin of the inner potential can be treated by means of the Levi-Civita regularisation technique (see \cite{Levi-Civita}), which consists in a change both in the temporal parameter and the spatial coordinates, in order to remove the singularity of Kepler-type potentials. In particular, the following Proposition, whose proof is discussed in Appendix \ref{appB}, holds.  
\begin{prop}\label{lem levi civita}
	Problem (\ref{inprob}) is conjugated, via a suitable set of transformations called the \textit{ Levi-Civita transformations}, to the problem 
	\begin{equation}\label{LCprob}
		\begin{cases}
			w''(\tau)=\Omega^2w(\tau), \quad &\tau\in[0,T], \\
			\frac{1}{2}|w'(\tau)|^2-E-\frac{\Omega^2}{2}|w(\tau)|^2=0, &\tau\in[0,T],\\
			w(0)=w_0, w'(0)=\dot{w}_0
		\end{cases}
	\end{equation}
	for suitable $w_0, \dot{w}_0, T$, $\Omega=\sqrt{2\mathcal{E}+2h}$, $E=\mu$. 
\end{prop}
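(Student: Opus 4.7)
The plan is to carry out the classical Levi-Civita regularisation in complex notation. Identify $\mathbb{R}^{2}\simeq\mathbb{C}$ so that $z(s)\in\mathbb{C}$, and introduce the conformal spatial map together with a Sundman-type time reparametrisation
\begin{equation*}
z=w^{2},\qquad \frac{ds}{d\tau}=2|w|^{2}.
\end{equation*}
Throughout, I denote ${}^{\displaystyle\cdot}=d/d\tau$ and ${}'=d/ds$, and work on an interval of $s$ on which the inner arc does not pass through the origin, choosing a consistent branch $w=\sqrt{z}$ along the arc.

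First, the chain rule gives $z'=\dot w/\bar w$ and hence $|z'|^{2}=|\dot w|^{2}/|w|^{2}$, while $|z|=|w|^{2}$. Substituting into the energy relation $\tfrac12|z'|^{2}-\mu/|z|=\mathcal{E}+h$ and multiplying by $|w|^{2}$ yields
\begin{equation*}
\tfrac12|\dot w|^{2}-(\mathcal{E}+h)|w|^{2}-\mu=0,
\end{equation*}
which is exactly the energy equation in (\ref{LCprob}) with $\Omega^{2}=2(\mathcal{E}+h)$ and $E=\mu$.

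Next, I would verify the equivalence of the two equations of motion. It is cleaner to check the reverse implication: assuming $\ddot w=\Omega^{2}w$ together with the transformed energy identity, differentiate $z'=\dot w/\bar w$ once more via $d/ds=(2|w|^{2})^{-1}d/d\tau$, obtaining
\begin{equation*}
z''=\frac{1}{2\bar w^{2}}\left(\Omega^{2}-\frac{|\dot w|^{2}}{|w|^{2}}\right).
\end{equation*}
Inserting $|\dot w|^{2}=\Omega^{2}|w|^{2}+2E$ collapses the parenthesis to $-2E/|w|^{2}$, and the elementary identity $z/|z|^{3}=1/(|w|^{2}\bar w^{2})$ (which follows from $z=w^{2}$ and $|z|^{3}=|w|^{6}=(w\bar w)^{3}$) yields $z''=-\mu z/|z|^{3}$. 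Hence the two flows are conjugated.

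Finally, I fix Cauchy data by choosing $w_{0}$ to be one of the two square roots of $z_{0}$ and taking $\dot w_{0}=\bar w_{0}\,\mathbf{v}_{0}$, and I define $T$ implicitly through $ds=2|w|^{2}d\tau$ as the value of $\tau$ at which $s=S$. The structural payoff of the argument, which I would stress explicitly, is that whereas the original vector field blows up at $z=0$, the point $w=0$ is a regular point of the smooth linear flow $\ddot w=\Omega^{2}w$; this is precisely what allows collision trajectories to be extended through the origin by reflection in the regularised variables, as already invoked in the introduction. The only delicate aspect of the bookkeeping is the two-to-one nature of $z\mapsto w^{2}$, which requires a consistent branch choice along each arc, together with care in handling complex conjugates in the chain rule; neither presents a genuine obstacle.
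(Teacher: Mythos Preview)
Your argument is correct and follows essentially the same route as the paper's own proof: the standard Levi-Civita regularisation via $z=w^{2}$ together with a Sundman time change. The only differences are presentational---you combine the paper's two successive time rescalings into the single relation $ds/d\tau=2|w|^{2}$, exploit the compact complex identity $z'=\dot w/\bar w$, and verify the equivalence of the equations of motion in the reverse direction (linear $\Rightarrow$ Kepler) rather than forward; none of this changes the substance.
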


We will refer to the time variable $\tau$ as the Levi-Civita time, and to the new reference system as the Levi-Civita plane. \\ 
By means of this regularisation, which is of indipendent interest and  will be used again in Section \ref{sec:general_stab}, one can infer the local existence and transversality of solutions of Problem (\ref{inprob}) in the vicinity of the homotetic brake orbits, in terms of both initial positions and velocities.

\begin{thmv}\label{thm esistenza int}
	Let us suppose that $\partial D$ is a closed curve of class $C^2$, $\boldsymbol{0}\in D$, parametrised by $\gamma(\xi):I\rightarrow \mathbb{R}^2$, and suppose that there is $\bx\in I$ such that condition \ref{cond Hom} is satisfied. Then there exist $\lambda_{\x0}(\bx)>0$ and  $0<\lambda_{\alpha}(\bar{\xi})<\pi/2$ such that for every $\x0\in [\bx-\lambda_{\x0}, \bx+\lambda_{\x0}]$ and  $\alpha\in[-\lambda_{\alpha},\lambda_{\alpha}]$ there are $T>0$ and $\xi_1\in I$ such that the problem 
	\begin{equation}\label{prob}
		\begin{cases}
			z''(s)=-\frac{\mu}{|z(s)|^3}z(s), \quad \frac{1}{2}|z'(s)|^2=\mathcal{E}+h+\frac{\mu}{|z(s)|}, \quad s\in[0,S]\\
			z(0)=\gamma(\x0)=\rho(\x0) e^{i\theta(\x0)}, z'(0)=\sqrt{2}\sqrt{\mathcal{E}+h+\frac{\mu}{\rho(\x0)}}e^{i(\theta(\bar{\xi})+\alpha)}
		\end{cases}
	\end{equation}
	admits the unique solution $z(s; \x0, \alpha)$. Moreover, $z(T;\x0,\alpha)=\gamma(\xi_1)\in\partial D$, and $z'(T;\x0,\alpha)\equiv  z'(T)$ is not tangent to $\partial D$. More precisely, there exists $\sigma>0$, depending on $\bar{\xi}$, such that, if we define $\beta$ such that $z'(T)=\sqrt{2}\sqrt{\mathcal{E}+h+\frac{\mu}{\rho(\xi_1)}}e^{i(\theta(\xi_1)+\beta)}$, we have $\beta\in[-\sigma,\sigma]$.
\end{thmv}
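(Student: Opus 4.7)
The strategy is to lift the singular Keplerian problem \eqref{prob} to its Levi--Civita regularization \eqref{LCprob} and then apply the implicit function theorem around the homotetic collision trajectory. Under the transformations of Proposition \ref{lem levi civita}, the boundary $\gamma(\xi)$ lifts locally (choosing a branch of the square root) to a $C^2$ curve $\tilde\gamma$ in the $w$-plane with $\tilde\gamma(\xi)^2=\gamma(\xi)$, and the Cauchy data of \eqref{prob} translate into initial data for the regular linear problem \eqref{LCprob} depending smoothly on $(\xi_0,\alpha)$. Let $w(\tau;\xi_0,\alpha)$ denote the unique solution.

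At $(\xi_0,\alpha)=(\bar\xi,0)$ the lifted trajectory $w_{\mathrm{hom}}(\tau)$ lies on a line through the origin of the $w$-plane and passes smoothly through $w=0$ (the regularised collision) in finite Levi--Civita time. Projecting back via $z=w^2$ reproduces the physical homotetic collision arc: it enters $D$ at $\gamma(\bar\xi)$, reaches the origin, and exits $\partial D$ at the first intersection $\gamma(\bar\xi_1)$ of the antipodal ray with $\partial D$, after a Levi--Civita time $T_0$.

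To handle the perturbation I apply the implicit function theorem, in the $z$-plane, to
\[
F(\xi_0,\alpha;\xi,\tau) = w(\tau;\xi_0,\alpha)^2 - \gamma(\xi),
\]
at the base point $(\bar\xi,0;\bar\xi_1,T_0)$. Since $w$ is smooth in $(\tau,\xi_0,\alpha)$ by standard theory applied to the regular linear ODE \eqref{LCprob}, $F$ is $C^1$ in all arguments. The partial Jacobian in $(\xi,\tau)$ has columns $-\dot\gamma(\bar\xi_1)$ and $(d/d\tau)(w_{\mathrm{hom}}^2)(T_0)$; the latter is a positive multiple of the physical exit velocity of the homotetic orbit and is therefore radial along $\gamma(\bar\xi_1)$, while transversality of the antipodal ray to $\partial D$ at $\bar\xi_1$ ensures it is not parallel to $\dot\gamma(\bar\xi_1)$. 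Invertibility of the Jacobian produces $C^1$ functions $\xi_1(\xi_0,\alpha)$ and $T(\xi_0,\alpha)$ on a neighbourhood $[\bar\xi-\lambda_{\xi_0},\bar\xi+\lambda_{\xi_0}]\times[-\lambda_\alpha,\lambda_\alpha]$ of $(\bar\xi,0)$; substituting back via $z=w^2$ delivers the required solution and endpoint data.

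The bound $\beta\in[-\sigma,\sigma]$ on the exit angle follows by continuity from the base-point value $\beta=0$, and local uniqueness of the first return together with star-convexity (\ref{cond Hom})(ii) prevents earlier intersections with $\partial D$. The principal obstacle is the collision passage, where the Kepler flow is singular and classical ODE continuous-dependence arguments break down; Levi--Civita regularization in Proposition \ref{lem levi civita} resolves this, allowing the entire perturbation analysis to be carried out in the smooth setting of \eqref{LCprob}, at the cost of keeping track of the two-to-one squaring map and of the correct branch of $\tilde\gamma$ at the entry and exit points.
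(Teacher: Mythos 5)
Your overall strategy coincides with the paper's: regularise via Levi--Civita (Proposition \ref{lem levi civita}), lift the boundary to the two branches of the square root, run a transversality/implicit-function argument around the regularised homotetic trajectory, and transfer the angle bounds back through the conformal squaring map. However, there is one genuine error in your geometric setup.

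You assert that the homotetic arc, after the regularised collision, ``exits $\partial D$ at the first intersection $\gamma(\bar\xi_1)$ of the \emph{antipodal} ray with $\partial D$,'' and you base the implicit function theorem at $(\bar\xi,0;\bar\xi_1,T_0)$ with $\bar\xi_1$ on the opposite side of the origin. This is not what Levi--Civita regularisation does for the Kepler collision. The regularised homotetic solution is $w_0(\tau)=\tfrac{\sqrt{2E}}{\Omega}\sinh(\Omega\tau)(1,0)$, a line through the origin of the $w$-plane running from $\phi_-(\bar\xi)=-\sqrt{|\gamma(\bar\xi)|}\,e^{i\theta(\bar\xi)/2}$ to $\phi_+(\bar\xi)=+\sqrt{|\gamma(\bar\xi)|}\,e^{i\theta(\bar\xi)/2}$; these two points are antipodal \emph{in the $w$-plane}, but both square to the same physical point $\gamma(\bar\xi)$. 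Physically the collision is extended by \emph{reflection}: the particle falls radially to the origin and comes back out along the same ray, so the inner homotetic arc is the ejection--collision solution with $z(0)=z(T_I)=\gamma(\bar\xi)$, exactly as the paper states. This is also consistent with the limit of near-radial Keplerian hyperbol\ae, whose deflection angle tends to $\pi$ as the angular momentum vanishes. The error is not cosmetic: the hypotheses (\ref{cond Hom}) control only the ray $t\gamma(\bar\xi)$, $t\ge 0$ --- nothing is assumed about transversality or uniqueness of intersection of the antipodal ray with $\partial D$, so at your proposed base point the invertibility of the Jacobian and the exclusion of earlier crossings would both be unjustified. Placed at the correct base point $(\bar\xi,0;\bar\xi,T_0)$, your Jacobian argument goes through: the exit velocity of the homotetic arc is radial at $\gamma(\bar\xi)$ and condition (\ref{cond Hom})(i) gives $\gamma(\bar\xi)\nparallel\dot\gamma(\bar\xi)$, hence invertibility; the continuity argument for $\beta$ and the use of star-convexity to rule out intermediate intersections then work as you describe, matching Proposition \ref{prop1 app} and Proposition \ref{InizCondTras app} of the paper.
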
 
\begin{rem}
	As in the case of the outer dynamics, one can assure that $z((0,T);\x0,\alpha)\in D$, namely, that $z(s; \x0,\alpha)$ does not intersect $\partial D$ for $s\in(0,T)$. This is in fact guaranteed by the validity of condition (\ref{cond Hom}(ii)), the continous dependence of problem (\ref{prob}) on the inital conditions and the fact that $0\in\dot{D}$. 
\end{rem}
\begin{notaz}\label{notazione} In the following, we will refer to the outer differential equation along with the energy conservation law with $(\Prob_E)$, while $(\Prob_I)$ and $(\Prob_{LC})$ will denote the inner differential problem respectively in the physical and in the Levi-Civita variables, with their own energy conservation conditions. More precisely, we will write $(\Prob_E)[z]$ if $z$ satisfies the outer differential equation with zero energy, and use the analogous notation for $(\Prob_I)$ and $(\Prob_{LC})$. 
\end{notaz}
\section{First return map}\label{sec:first_return}
\textcolor{black}{As in the previous Sections, }let us suppose that the boundary of the regular domain $D$ defined in Section \ref{sec: model} can be parametrised by a regular closed curve $\gamma:I\rightarrow\mathbb{R}^2$. Given some initial conditions $z_0^{(I)}, v_0^{(I)}, z_0^{(E)}$ and $v_0^{(E)}$, let us consider the solutions  $z_I(s)$ and $z_E(s)$ of the two systems 
\begin{equation}\label{EI problem}
	\begin{cases}
		(\Prob_I)[z(s)] &s\in[0,T_I]\\
		z_I(0)=z_0^{(I)}, z'_I(0)=v_0^{(I)}
	\end{cases}
	\begin{cases}
	(\Prob_E)[z(s)] &s\in[0,T_E]\\
		z_E(0)=z_0^{(E)}, z'_E(0)=v_0^{(E)}
	\end{cases}
\end{equation}
for some $T_I$, $T_E>0$.
Fixed $z_0\in\partial D$, $v_0\in\mathbb{R}^2$ such that it points towards the exterior of $D$, we want to describe (supposing that it exists) the trajectory obtained by the juxtaposition of an outer arc $z_E$ and the subsequent inner arc $z_I$, namely $z_{EI}(s)$ defined by
\begin{equation}\label{zEI def}
	z_{EI}(s)=
	\begin{cases}
		z_E(s)\quad &s\in[0,T_E)\\
		z_I(s) &s\in[T_E,T_E+T_I)\\
		z_E^{(1)}(s) &s=T_E+T_I,
	\end{cases}
\end{equation}
where the branches $z_E$, $z_I$ and $z_E^{(1)}$ are solution either of the outer or the inner problem and are connected by following the Snell's rule (see Section \ref{subs:snell}). In particular, we require $z_E(T_E)=z_I(T_E)$ and $z_I(T_E+T_I)=z_E^{(1)}(T_E+T_I)$, and, using the notation  
\begin{gather*}	
		z_E(T_E)=z_I(T_E)=z_1, \quad z_I(T_E+T_I)=z_E^{(1)}(T_E+T_I)=z_2\\
		v_1=\frac{ z'_E(T_E)}{| z'_E(T_E)|}	, \quad v_1'=\frac{ z'_I(T_E)}{| z'_I(T_E)|},\quad v_2=\frac{ z'_I(T_E+T_I)}{| z'_I(T_E+T_I)|}\quad v_2'=\frac{ z'_I(T_E+T_I)}{| {z'_E}^{(1)}(T_E+T_I)|}
\end{gather*}
we demand
\begin{equation}\label{extprob}
	\begin{cases}
		(\Prob_E)[z_E(s)] &s\in[0,T_E]\\
		z_E(s)\notin D, z_E(T_E)\in\partial D &s\in(0,T_E)\\
		z_E(0)=z_0, z'_E(0)=v_0 
	\end{cases}
\end{equation}
\begin{equation}\label{intprob}
	\begin{cases}
		(\Prob_I)[z_I(s)] &s\in[T_E,T_E+T_I]\\
		z_I(s)\in D, z_I(T_E+T_I)\in\partial D&s\in(T_E,T_E+T_I)\\
		\sqrt{V_E(z_1)}v_1\cdot e_1= 
		\sqrt{V_I(z_1)}v_1'\cdot e_1
	\end{cases}
\end{equation}
\begin{equation*}
	\begin{cases}
		(\Prob_E)[z_E^{(1)}(s)] &s\in[T_E+T_I,T_E+T_I+\tilde{T}]\\
		
		z_I(s)\notin D,  &s\in(T_E+T_I,T_E+T_I+\tilde{T}]\\
		\sqrt{V_I(z_2)}v_2\cdot e_2= 
		\sqrt{V_E(z_2)}v_2'\cdot e_2,  
	\end{cases}
\end{equation*}
for some $T_E,T_I,\tilde{T}>0$ and where $e_1$ and $e_2$ are the unit vectors tangent to $\partial D$ respectively in $z_1$ and $z_2$. \\

\subsection{Local  first return map}

We wish to construct the iteration map which expresses $(z_1,v_1)=(z_{EI}(T_E+T_I),\dot{z}_{EI}(T_E+T_I))$ as a function of $(z_0,v_0)$ in a suitable set of coordinates.\\
Let us suppose that the point $z_{EI}(0)=\gamma(\xi)\in\partial D$ is the starting point of the outer branch of $z_{EI}(t)$: then, denoting with $t(\xi)$ and $n(\xi)$ respectively the tangent and the outward-pointing normal unit vectors of $\gamma$ in $\xi$, the initial velocity $v$ can be expressed as $v=\sqrt{2V_E(\gamma(\xi))}(\cos{\alpha}~ n(\xi)+\sin{\alpha}~ t(\xi))$, where $\alpha\in[{-\pi\textcolor{black}{/2},\pi\textcolor{black}{/2}}]$ is the angle between $v$ and $n(\xi)$, positive if $v\cdot t(\xi)\geq0$ and negative otherwise. Then, once $\xi$ is fixed, the vector $v$ is completely determined by $\alpha$. We can then consider the map 
\begin{gather*}
	F:B\subset\left([0,2\pi]\times[{-\pi\textcolor{black}{/2},\pi\textcolor{black}{/2}}]\right)\rightarrow[0,2\pi]\times[{-\pi\textcolor{black}{/2},\pi\textcolor{black}{/2}}],\\ (\xi_0,\alpha_0)\mapsto(\xi_1,\alpha_1)=(\xi_1(\xi_0,\alpha_0),\alpha_1(\xi_0,\alpha_0)),
\end{gather*} 
where the pair $(\xi_1,\alpha_1)$ completely determines $(z_{EI}(T_E+T_I),z'_{EI}(T_E+T_I))$. \textcolor{black}{The determination of the domain of $F$, denoted with $B$, is a nontrivial problem, whose main issues are discussed in Remark \ref{oss buona definizione}. }

Although $F$ is not explicitely defined, taking together the properties of the solutions of Problem (\ref{EI problem}) and Snell's law (\ref{snell law def}), under some suitable hypotheses on $\partial D$, one can characterize one particular class of fixed points of $F$, deriving from one-periodic homotetic solutions of (\ref{EI problem}): 

\begin{rem}\label{oss fixed points}
	Initial conditions $z_{EI}(0)=\gamma(\bar{\xi})$, $z'_{EI}(0)=\sqrt{\rule{0pt}{2ex}2V_E(\bar{\xi})}\gamma(\bar{\xi})/|\gamma(\bar{\xi})|$ correspond to an homotetic solution of Problem (\ref{EI problem}) if and only if 
	\begin{equation}\label{fixedcond}
		\begin{split}
		&\gamma(\bar{\xi})\perp\dot{\gamma}(\bar{\xi})\\
		\text{and the segment }t\gamma(\bar{\xi}), \text{ }&t\in[0,\infty), \text{ does not intersect }\partial D\text{ for }t\neq1. 
		\end{split}
	\end{equation}

	Therefore, if condition (\ref{fixedcond}) holds, the pair $(\bar{\xi}, 0)$ is a fixed point for $F$, which we call \textit{homotetic}. 
\end{rem}
\begin{rem}\label{oss buona definizione}
	The conditions for $F$ to be globally defined on $[0,2\pi]\times[-\pi/2,\pi/2]$ are essentially two: 
	\begin{itemize}
		\item[(i)] the existence and uniqueness of the outer and inner arcs for any inital conditions; 
		\item[(ii)] the good definition of the refraction rule for every incoming arc: according to Remark \ref{rifl totale}, it is equivalent to require that for every inner arc, \textcolor{black}{if we denote with $\beta_1$ the angle between $z'_I(T_I+T_E)$ and the inward-pointing normal vector to $\partial D$ in $\gamma(\x1)$, we shall have } $|\beta_1|<\beta_{crit}=\arcsin(\sqrt{V_E(\gamma(\x1))/V_I(\gamma(\x1))})$.  
	\end{itemize}
Theorems \ref{thm esistenza ext} and \ref{thm esistenza int} provide sufficient conditions for $(i)$ to be satisfied, as well as proving the existence of a small neighborhood of the homotetic initial conditions for which the inner arc is arbitrarily transverse to $\partial D$. As a consequence, even though the global definition of the first return map $F$ can not be assured without additional requirement on $\gamma$, the hypotheses of the existence theorems guarantee that \textit{the map is locally well defined near to the homotetic solutions.}
\end{rem}
As we will see in some specific cases, there are particular conditions on which the homotetic solutions are not the only one-periodic solutions of Problem (\ref{EI problem}). On the other hand, the study of the stability of this particular class of points allows us to derive important informations on the behaviour of $F$.

\section{Stability analysis of the homotetic fixed points of $F$}\label{sec:general_stab}

\subsection{The Jacobian matix of $F$}\label{ssec: Jac 1}
Without loss of generality, let us assume that, in complex notation, $\bar{\xi}\in[0,2\pi]$ is such that $\gamma(\bar{\xi})=|\gamma(\bar{\xi})|e^{i\bar{\xi}}$ and $\dot{\gamma}(\bx)=|\dot{\gamma}(\bx)|ie^{i\bx}$. Then the point $\bar{p}=(\bx,0)$ is a fixed point for $F$, whose stability properties can be deduced from the spectral properties of the Jacobian matrix 
\begin{equation}\label{jac}
DF((\bar{\xi},0))=
\begin{pmatrix}
	\frac{\partial\xi_1}{\partial\xi_0}_{\vert_{\bar{p}}} & \frac{\partial\xi_1}{\partial\alpha_0}_{\vert_{\bar{p}}} \\ 
	\frac{\partial\alpha_1}{\partial\xi_0}_{\vert_{\bar{p}}} & \frac{\partial\alpha_1}{\partial\alpha_0}_{\vert_{\bar{p}}},
\end{pmatrix} 
\end{equation} 
which can be derived through the implicit function theorem, even though $F$ is not explicitely determined. \\
Let us consider a generic potential $V(z)$ and, once fixed $z_0,z_1\in\mathbb{R}^2$, consider the function $z(s)=z(s;z_0,z_1)$ which solves the fixed end problem (\ref{general prob}).
As already seen in Section \ref{subs:snell} and, in mor details, in Appendix \ref{sec:appA}, $z(t;z_0,z_1)=z(s(t);z_0,z_1)$ is a critical point for the Jacobi length  $L(y(t)$
with endpoints $y(0)=z_0$  $y(1)=z_1$. We denote $L(z(t;z_0,z_1))$ the value of this length. Note that neither the outer nor the inner arcs are global minimizers of the Jacobi length with fixed ends. Indeed, it can be proved (though is not relevant in this paper) that  the inner arc is a local minimizer while the outer one has Morse index one (cfr \cite{MR4061719,MR4058157}).
We recall that: 
\begin{itemize}
\item $t$ is the geodesic time, whose relation with the cinetic time $s$ is discussed in Section \ref{sec:variational} and Remark \ref{oss tempi};
\item $L(y(t))=\int_0^1\vert \dot{y}(t)\vert\sqrt{V(y(t))}dt$; 
\item $L=L(z(t;z_0,z_1))=\vert \dot{z}(t)\vert\sqrt{V(z(t))}=const$.
\end{itemize}
If we consider a generic unit vector $e$, recalling and generalizing (\ref{dirder}) the directional derivatives of $d(z_0,z_1)$ with respect to the first or second variable, denoted respectively with $v$ and $w$, can be written as
\begin{equation}\label{def der}
\begin{split}
	\partial_{e,v}d(z_0,z_1)=\nabla_{z_0}d(z_0,z_1)\cdot e, \quad &\partial_{e,w}d(z_0,z_1)=\nabla_{z_1}d(z_0,z_1)\cdot e\\
	\nabla_{z_0}d(z_0,z_1)=-\sqrt{V(z(0))}\frac{\dot{z}(0)}{\vert\dot{z}(0)\vert},\quad &\nabla_{z_1}d(z_0,z_1)=\sqrt{V(z(1))}\frac{\dot{z}(1)}{\vert\dot{z}(1)\vert}. 
\end{split}
\end{equation}
Let us now define the generating function 
\begin{equation*}
S(\xi_0,\xi_1)=d(\gamma(\xi_0),\gamma(\xi_1)),  
\end{equation*}
and define the tangent unit vectors $e_0=\dot{\gamma}(\x0)/|\dot{\gamma}(\x0)|$ and $e_1=\dot{\gamma}(\x1)/|\dot{\gamma}(\x1)|$. Hence, we have that the partial derivatives of $S$ with respect to $\x0$ and $\x1$ can be expressed as 
\begin{equation}\label{derS}
\begin{split}
	&\partial_{\xi_0}S(\xi_0,\xi_1)=\frac{d}{d\epsilon}d(\gamma(\xi_0+\epsilon \xi), \gamma(\xi_1))_{\vert_{\epsilon=0}}=\partial_{e_0,v}d(\gamma(\xi_0), \gamma(\xi_1))=\nabla_{z_0}d(\gamma({\xi_0}),\gamma(\xi_1))\cdot e_0, \\
	&\partial_{\xi_1}S(\xi_0,\xi_1)=\partial_{e_1,w}d(\gamma(\xi_0), \gamma(\xi_1))=\nabla_{z_1}d(\gamma({\xi_0}),\gamma(\xi_1))\cdot e_1. 
\end{split}
\end{equation}
Turning to the trajectory $z_{EI}(s)$ which describes a complete cycle exterior-interior, we can use the previous formulas to describe some geometric properties of the latter. Referring to (\ref{EI problem}) and further equations, define: 
\begin{itemize}
\item $\xi_0,\tilde{\xi},\xi_1\in[a,b]$ such that $\gamma({\xi_0})=z_0$, $\gamma({\tilde{\xi}})=z_1$, $\gamma({\xi_1})=z_2$; 
\item  $\alpha_0$ the angle between $v_0$ with $n(\xi_0)$; 
\item $\beta_0,\beta_1$ respectively the angles of $\dot{z}_E(T_E)$ and $\dot{z}_I(T_E)$ with $n(\tilde{\xi})$;
\item $\alpha_1',\alpha_1$ respectively the angles of $\dot{z}_I(T_E+T_I)$ and $\dot{z}_E^{(1)}(T_E+T_I)$ with $n(\xi_1)$. 
\end{itemize}
Then, from (\ref{def der}), (\ref{derS}) and (\ref{snell law def}), one finds the relations
\begin{equation}\label{6 eq}
\begin{split}
	&-\sqrt{V_E(\gamma({\xi_0}))}\sin{\alpha_0}=\partial_{\xi_0}S_E(\xi_0,\tilde{\xi}), \\
	&\sqrt{V_E(\gamma({\tilde{\xi}}))}\sin{\beta_0}=\partial_{\xi_1}S_E(\xi_0,\tilde{\xi}),\\
	&-\sqrt{V_I(\gamma({\tilde{\xi}}))}\sin{\beta_1}=\partial_{\xi_0}S_I(\tilde{\xi},\xi_1),\\
	&\sqrt{V_I(\gamma({\xi_1}))}\sin{\alpha_1'}=\partial_{\xi_1}S(\tilde{\xi},\xi_1),\\
	&\sqrt{V_E(\gamma({\tilde{\xi}}))}\sin{\alpha_0}=\sqrt{V_I(\gamma(\tilde{\xi}))}\sin{\beta_0}\\
	&\sqrt{V_I(\gamma({\xi_1}))}\sin{\alpha_1'}=\sqrt{V_E(\gamma(\xi_1))}\sin{\alpha_1},
\end{split}
\end{equation}
where $S_E$ and $S_I$ refer respectively to $d_E$ and $d_I$.
Removing $\beta_0, \beta_1$ and $\alpha_1'$ from (\ref{6 eq}), one obtains
\begin{equation}\label{3 eq}
\begin{split}
	&\partial_{\xi_0}S_E(\xi_0,\tilde{\xi})+\sqrt{V_E(\gamma(\xi_0))}\sin{\alpha_0}=0,\\
	&\partial_{\xi_1}S_E(\xi_0,\tilde{\xi})+\partial_{\xi_0}S_I(\tilde{\xi},\xi_1)=0,\\
	&\partial_{\xi_1}S_I(\tilde{\xi},\xi_1)-\sqrt{V_E(\gamma(\xi_1))}\sin{\alpha_1}=0.
\end{split}
\end{equation}
We can then define the function 
\begin{equation*}
\begin{split}
	&\Phi=\textcolor{black}{[\eta_\xi^-, \eta_\xi^+]\times[\eta_\alpha^-, \eta_\alpha^+]\times[\eta_\xi^-, \eta_\xi^+]\times[\eta_\xi^-, \eta_\xi^+]\times[\eta_\alpha^-, \eta_\alpha^+]}\rightarrow\mathbb{R}^3, \\
	&(\xi_0,\alpha_0,\tilde{\xi},\xi_1,\alpha_1)\mapsto
	\begin{pmatrix}
		\Phi_1(\xi_0,\alpha_0,\tilde{\xi},\xi_1,\alpha_1)\\
		\Phi_2(\xi_0,\alpha_0,\tilde{\xi},\xi_1,\alpha_1)\\
		\Phi_3(\xi_0,\alpha_0,\tilde{\xi},\xi_1,\alpha_1)
	\end{pmatrix}
	=
	\begin{pmatrix}
		\frac{\partial_{\xi_0}S_E(\xi_0,\tilde{\xi})}{\sqrt{V_E(\gamma(\xi_0))}}+\sin{\alpha_0}\\
		\partial_{\xi_1}S_E(\xi_0,\tilde{\xi})+\partial_{\xi_0}S_I(\tilde{\xi},\xi_1)\\
		\sin{\alpha_1}-\frac{\partial_{\xi_1}S_I(\tilde{\xi},\xi_1)}{\sqrt{V_E(\gamma(\xi_1))}}
	\end{pmatrix},
\end{split}
\end{equation*}
\textcolor{black}{where $[\eta_\xi^-,\eta_\xi^+]$ and $[\eta_\alpha^-,\eta_\alpha^+]$ are neighborhoods respectively of $\bx$ and $0$ such that the inner and outer dynamics are well defined (we remark that the existence of such neighborhoods is assured by Theorems \ref{thm esistenza ext} and \ref{thm esistenza int}). }\\
If $\xi_0, \tilde{\xi}$ and $\xi_1$ define respectively the inital, junction and final point of $z_{EI}(s)$, and $\alpha_0,\alpha_1$ are the angles of the initial and final velocity vectors of $z_{EI}(s)$ with the direction normal to $\partial D$ in the initial and final points, then, from (\ref{3 eq}), $\Phi((\xi_0,\alpha_0, \tilde{\xi}, \xi_1, \alpha_1))=0$. The point $\bar{q}$ which describes the homotetic solution defined in Remark \ref{oss fixed points}, which we call $\hat{z}_0(s)$,  is given by $\bar{q}=(\bar{\xi},0,\bar{\xi},\bar{\xi},0)$: clearly, $\Phi(\bar{q})=0$. \\
Under the hypothesis of nonsingularity of the matrix 
\begin{equation}\label{DyPhi}
D_{(\tilde{\xi},\xi_1,\alpha_1)}\Phi(\bar{q})=
\begin{pmatrix}
	\frac{\partial \Phi_1}{\partial\tilde{\xi}}_{\vert_{\bar{q}}}&\frac{\partial \Phi_1}{\partial\xi_1}_{\vert_{\bar{q}}}&\frac{\partial \Phi_1}{\partial\alpha_1}_{\vert_{\bar{q}}}\\
	\frac{\partial \Phi_2}{\partial\tilde{\xi}}_{\vert_{\bar{q}}}&\frac{\partial \Phi_2}{\partial\xi_1}_{\vert_{\bar{q}}}&\frac{\partial \Phi_2}{\partial\alpha_1}_{\vert_{\bar{q}}}\\
	\frac{\partial \Phi_3}{\partial\tilde{\xi}}_{\vert_{\bar{q}}}&\frac{\partial \Phi_3}{\partial\xi_1}_{\vert_{\bar{q}}}&\frac{\partial \Phi_3}{\partial\alpha_1}_{\vert_{\bar{q}}}
\end{pmatrix},
\end{equation}
which we will prove in Section \ref{ssec stab}, the implicit function theorem guarantees the existence of a function $\Psi:I_1\times J_1\rightarrow I_2\times I_3\times J_2$, $(\xi_0,\alpha_0)\mapsto(\tilde{\xi}(\xi_0,\alpha_0),\xi_1(\xi_0,\alpha_0),\alpha_1(\xi_0,\alpha_0))$, where $I_1,I_2,I_3$ and $J_1,J_2$ are suitable neighborhoods respectively of $\bar{\xi}$ and $0$, such that
\begin{equation*}
\forall (\xi_0,\alpha_0)\in I_1\times J_1\quad \Phi((\xi_0,\alpha_0,\Psi((\xi_0,\alpha_0))))=0.
\end{equation*} 
Moreover, defined 
\begin{equation*}
D_{(\xi_0,\alpha_0)}\Phi(\bar{q})=
\begin{pmatrix}
	\frac{\partial \Phi_1}{\partial\xi_0}_{\vert_{\bar{q}}}&\frac{\partial \Phi_1}{\partial\alpha_0}_{\vert_{\bar{q}}}\\
	\frac{\partial \Phi_2}{\partial\xi_0}_{\vert_{\bar{q}}}&\frac{\partial \Phi_2}{\partial\alpha_0}_{\vert_{\bar{q}}}\\
	\frac{\partial \Phi_3}{\partial\xi_0}_{\vert_{\bar{q}}}&\frac{\partial \Phi_3}{\partial\alpha_0}_{\vert_{\bar{q}}}
\end{pmatrix},\quad
D_{(\xi_0,\alpha_0)}\Psi(\bar p)=
\begin{pmatrix}
	\frac{\partial \tilde{\xi}}{\partial\xi_0}_{\vert_{(\bar{\xi},0)}}&\frac{\partial \tilde{\xi}}{\partial\alpha_0}_{\vert_{(\bar{\xi},0)}}\\
	\frac{\partial \xi_1}{\partial\xi_0}_{\vert_{(\bar{\xi},0)}}&\frac{\partial \xi_1}{\partial\alpha_0}_{\vert_{(\bar{\xi},0)}}\\
	\frac{\partial \alpha_1}{\partial\xi_0}_{\vert_{(\bar{\xi},0)}}&\frac{\partial \alpha_1}{\partial\alpha_0}_{\vert_{(\bar{\xi},0)}}
\end{pmatrix},
\end{equation*}
one has that 
\begin{equation*}
D_{(\xi_0,\alpha_0)}\Psi(\bar p)=-(D_{(\tilde{\xi},\xi_1,\alpha_1)}\Phi(\bar{q}))^{-1}D_{(\xi_0,\alpha_0)}\Phi(\bar{q}).
\end{equation*}
Recalling (\ref{jac}), we see that $DF(\bar p)$ is composed by the last two rows of $D_{(\xi_0,\alpha_0)}\Psi(\bar p)$. \\
To compute (\ref{DyPhi}), the second derivative of $S_E(\xi_0,\tilde{\xi})$ and $S_I(\tilde{\xi},\xi_1)$ computed in $\bar p$ are needed.

\subsection{Outer dynamics: computation of the derivatives of $\boldsymbol{S_E(\xi_0,\tilde{\xi})}$}\label{ssec: outer der}

Let us define $z_E^0(s)=z_E(s;\gamma(\bar{\xi}),\gamma(\bar{\xi}))$ the homotetic solution of problem 
(\ref{extprob}) (without loss of generality, suppose that it is defined in $[-T,T]$ for some $T>0$ to be determined). Recalling that, from the initial assumptions on $\bar{\xi}$, $\gamma({\bar{\xi}})=|\gamma(\bx)|e^{i\bx}$ and $\dot{\gamma}(\bar{\xi})=|\dot{\gamma}(\bx)|ie^{i\bx}$, we have that $z_0^E(s)=x_0^E(s)e^{i\bx}$, where $x_0^E(s):[-T,T]\rightarrow\mathbb{R}$ is a solution of the one-dimensional fixed-end problem
\begin{equation*}
\begin{cases}
	x_0^{E''}(s)=-\omega^2x_0^E(s), &s\in[-T,T]\\
	\frac{1}{2}\vert x_0^{E'}(s)\vert^2+\frac{\omega^2}{2}\vert x_0^E(s)\vert^2-\mathcal{E}=0, &s\in[-T,T]\\
	x_0^E(-T)=x_0^E(T)=|\gamma(\bx)|. 
\end{cases}
\end{equation*}
Then we have
\begin{equation}\label{omotetica esterna}
\begin{split}
	z_0^E(s)=\frac{\sqrt{2\mathcal{E}}}{\omega}\cos{(\omega s)}e^{i\bx}, \quad T=\frac{1}{\omega}\arccos{\left(\frac{\omega|\gamma(\bx)|}{\sqrt{2\mathcal{E}}}\right)},\\
	z_0^{E'}(-T)=-z_0^{E'}(T)=\sqrt{2\mathcal{E}-\omega^2|\gamma(\bx)|^2}e^{i\bx}; 
\end{split}
\end{equation}
taking into account (\ref{def der}) and the relations 
\begin{equation*}
\frac{d}{dt}=\frac{L}{\sqrt{2}V(z(t(s)))}\frac{d}{ds}\Rightarrow
\begin{cases}
	\sqrt{V_E(z_0^E(0))}\frac{\dot{z}(0)}{\vert\dot{z}(0)\vert}=\frac{1}{\sqrt{2}}z'(-T)\\
	\sqrt{V_E(z_0^E(1))}\frac{\dot{z}(1)}{\vert\dot{z}(1)\vert}=\frac{1}{\sqrt{2}}z'(T)
\end{cases}
\end{equation*}

one has
\begin{equation}\label{ext prime der}
\begin{split}
	&\partial_{\xi_0}S_E(\bar{\xi},\bar{\xi})=\nabla_{z_0}d_E(\gamma(\bar{\xi}),\gamma(\bar{\xi}))\cdot\dot{\gamma}(\bx)=-\frac{\sqrt{\rule{0pt}{2ex}2\E-\om|\gamma(\bx)|^2}}{\sqrt{2}}e^{i\bx}\cdot \dot{\gamma}(\bx)=0,\\
	&\partial_{\xi_1}S_E(\bar{\xi},\bar{\xi})=0. 
\end{split}
\end{equation}
As for the second derivatives, we have
\begin{equation}\label{second ext}
\begin{split}
	&\partial_{\xi_0}^2S_E(\bar{\xi},\bar{\xi})=\nabla_{z_0}^2d_E(\gamma({\bar{\xi}}),\gamma(\bar{\xi}))\dot{\gamma}(\bar{\xi})\cdot\dot{\gamma}(\bar{\xi})+\nabla_{z_0}d_E(\gamma(\bar{\xi}),\gamma(\bar{\xi}))\cdot\ddot{\gamma}(\bar{\xi})\\
	&\partial_{\xi_1}^2S_E(\bar{\xi},\bar{\xi})=\nabla_{z_1}^2d_E(\gamma({\bar{\xi}}),\gamma(\bar{\xi}))\dot{\gamma}(\bar{\xi})\cdot\dot{\gamma}(\bar{\xi})+\nabla_{z_1}d_E(\gamma(\bar{\xi}),\gamma(\bar{\xi}))\cdot\ddot{\gamma}(\bar{\xi})\\
	&\partial_{\xi_0,\xi_1}^2S_E(\bar{\xi},\bar{\xi})=\nabla_{z_0,z_1}^2d_E(\gamma({\bar{\xi}}),\gamma(\bar{\xi}))\dot{\gamma}(\bar{\xi})\cdot\dot{\gamma}(\bar{\xi})\\ &\partial_{\xi_1,\xi_0}^2S_E(\bar{\xi},\bar{\xi})=\nabla_{z_1,z_0}^2d_E(\gamma({\bar{\xi}}),\gamma(\bar{\xi}))\dot{\gamma}(\bar{\xi})\cdot\dot{\gamma}(\bar{\xi}), 
\end{split}
\end{equation} 
where, defining $\bar{e}=ie^{i\bx}$, 
\begin{equation}\label{secon ext 2}
\begin{split}
	\nabla_{z_0}^2d_E(\gamma(\bar{\xi}),\gamma(\bar{\xi}))\dot{\gamma}(\bar{\xi})&=|\dot{\gamma}(\bx)|\partial_{\bar{e},v}\left(\partial_{\bar{e},v}d_E\right)(\gamma({\bar{\xi})},\gamma(\bar{\xi}))=|\dot{\gamma}(\bx)|\partial_{\bar{e},v}\left(-\frac{1}{\sqrt{2}}z_0^{E'}(-T)\right)=\\
	&=-\frac{|\dot{\gamma}(\bx)|}{\sqrt{2}}\frac{d}{ds}\left(\partial_{\bar{e},v}z_0^E\right)(-T),
\end{split}
\end{equation}
and, similarly, 
\begin{equation}\label{second ext 3}
\begin{split}
	&\nabla_{z_1}^2d_E(\gamma(\bar{\xi}),\gamma(\bar{\xi}))\dot{\gamma}(\bar{\xi})=\frac{|\dot{\gamma}(\bx)|}{\sqrt{2}}\frac{d}{ds}\left(\partial_{\bar{e},w}z_0^E\right)(T), \\
	&\nabla_{z_0,z_1}^2d_E(\gamma(\bar{\xi}),\gamma(\bar{\xi}))\dot{\gamma}(\bar{\xi})=-\frac{|\dot{\gamma}(\bx)|}{\sqrt{2}}\frac{d}{ds}\left(\partial_{\bar{e},w}z_0^E\right)(-T),\\
	&\nabla_{z_1,z_0}^2d_E(\gamma(\bar{\xi}),\gamma(\bar{\xi}))\dot{\gamma}(\bar{\xi})=\frac{|\dot{\gamma}(\bx)|}{\sqrt{2}}\frac{d}{ds}\left(\partial_{\bar{e},v}z_0^E\right)(T). 
\end{split}
\end{equation}
The functions $\partial_{\bar{e},v}z_0^E(s)$ and $\partial_{\bar{e},w}z_0^E(s)$ are the first-order variations of $z_0^E(s)$ with respect to the variation respectively of its first and second endpoint along the unit vector $\bar{e}$, which is orthogonal to $z_0^E(s)$. If we define $\tilde{f}_0(s)=\partial_{\bar{e},v}z_0^E(s)$ and $\tilde{f}_1(s)=\partial_{\bar{e},w}z_0^E(s)$, we have that 
$\tilde{f}_0(s)=f_0(s)\bar{e}$ and $\tilde{f}_1(s)=f_1(s)\bar{e}$, with $f_0, f_1:[-T,T]\rightarrow\mathbb{R}$ to be determined. Consider $z(t)=z_0^E(t)+\tilde{f}_0(t)$ the geodesics obtained by varying the first endpoint of $z_0^E(t)$ in the direction of $\dot{\gamma}(\bar{\xi})$ expressed with respect to the geodesic time $t$: it solves the Euler-Lagrange equation with $\mathcal{L}=\vert\dot{z}(t)\vert^2V_E(z(t))$, namely, 
\begin{equation*}
\begin{split}
	0&=-\frac{d}{dt}\left(2\dot{z}(t)V(z(t))\right)+\vert\dot{z}(t)\vert^2\nabla V_E(z(t))=\\
	&=-\frac{d}{dt}\left(2(\dot{z}_0^E(t)+\dot{\tilde{f}}_0(t))V(z_0^E(t)+\tilde{f}_0(t))\right)+\vert\dot{z}_0^E(t)+\dot{\tilde{f}}_0(t)\vert^2\nabla V_E(z_0^E(t)+\tilde{f}_0(t))=\\
	&=-2\frac{d}{dt}\left(\dot{\tilde{f}}_0(t)V(z_0^E(t))\right)+\vert\dot{z}_0(t)\vert^2\nabla^2 V_E(z_0^E(t))\tilde f_0(t)=\\
	&=-2 \frac{\vert\dot{z}_0^E(t(s))\vert}{\sqrt{2V_E(z_0^E(t(s)))}}\frac{d}{ds}\left(V(z_0^E(t(s))) \frac{\vert\dot{z}_0^E(t(s))\vert}{\sqrt{2V_E(z_0^E(t(s)))}}\frac{d}{ds}\tilde{f}_0(t(s))\right)+\\
	&\quad+\vert\dot{z}_0(t(s))\vert^2\nabla^2 V_E(z_0^E(t(s)))\tilde f_0(t(s))\\
	&\Rightarrow \tilde{f}_0''(s)-\nabla^2V_E(z_0^E(s))\tilde{f}_0(s)=0, 
\end{split}
\end{equation*}
where we took only the first-order terms and used the transformation rules between $d/dt$ and $d/ds$, the conservation of $L=\vert\dot{z}_0^E(t(s))\vert\sqrt{V_E(z_0^E(t(s)))}$ and the Euler-Lagrange equation for $z_0^E(t)$.\\
Since $\tilde{f}_0(-T)=\bar{e}$ and $\tilde{f}_0(T)=\boldsymbol{0}$, $f_0(s)$ solves the one-dimensional system 
\begin{equation*}
\begin{cases}
	f_0''(s)=-\omega^2 f_0(s), \quad s\in[-T,T] \\
	f_0(-T)=1, f_0(T)=0,
\end{cases}
\end{equation*}
namely, recalling the definition of $T$ in (\ref{omotetica esterna}), 
\begin{equation*}
\tilde{f}_0(s)=\frac{1}{2}\left(\frac{\sqrt{2\mathcal{E}}}{\omega|\gamma(\bx)|}\cos{(\omega s)}-\frac{\sqrt{2\mathcal{E}}}{\sqrt{2\mathcal{E}-\omega^2|\gamma(\bx)|^2}}\sin{(\omega s)}\right)\bar{e}.
\end{equation*}
With the same reasoning and taking into account  that $\tilde{f}_1(-T)=\boldsymbol{0}$ and $\tilde{f}_1(T)=\bar{e}$, we have that $\tilde{f}_1(s)=\tilde{f}_0(-s)$, and then we can finally find
\begin{equation}\label{variazioni ext}
\begin{split}
	\frac{d}{ds}\partial_{\bar{e},v}z_0^E(-T)=\frac{\mathcal{E}-\om|\gamma(\bx)|^2}{|\gamma(\bx)|\sqrt{2\mathcal{E}-\omega^2|\gamma(\bx)|^2}}\bar{e},\quad &\frac{d}{ds}\partial_{\bar{e},v}z_0^E(T)=-\frac{\mathcal{E}}{|\gamma(\bx)|\sqrt{2\mathcal{E}-\omega^2|\gamma(\bx)|^2}}\bar{e},\\
	\frac{d}{ds}\partial_{\bar{e},w}z_0^E(-T)=\frac{\mathcal{E}}{|\gamma(\bx)|\sqrt{2\mathcal{E}-\omega^2|\gamma(\bx)|^2}}\bar{e},\quad &\frac{d}{ds}\partial_{\bar{e},w}z_0^E(T)=-\frac{\mathcal{E}-\om|\gamma(\bx)|^2}{|\gamma(\bx)|\sqrt{2\mathcal{E}-\omega^2|\gamma(\bx)|^2}}\bar{e}. 
\end{split}
\end{equation}
Taking together (\ref{second ext}), (\ref{secon ext 2}), (\ref{second ext 3}) and (\ref{variazioni ext}), one can find the analytical expressions of the second derivatives of $S_E(\xi_0,\xi_1)$, computed for $\xi_0=\xi_1=\bar{\xi}$: 
\begin{equation}\label{outer der def}
\begin{split}
	&\partial_{\xi_0}^2S_E(\bar{\xi},\bar{\xi})=\partial_{\xi_1}^2S(\bar{\xi},\bar{\xi})=-\frac{|\dot{\gamma}(\bx)|^2}{2|\gamma(\bx)|}\frac{\E-\om|\gamma(\bx)|^2}{\sqrt{V_E(\gamma(\bx))}}-\sqrt{V_E(\gamma(\bx))}e^{i\bx}\cdot\ddot{\gamma}(\bx),\\
	&\partial_{\xi_0,\xi_1}^2S_E(\bar{\xi},\bar{\xi})=\partial_{\xi_1,\xi_0}^2S(\bar{\xi},\bar{\xi})=-\frac{|\dot{\gamma}|^2}{2|\gamma(\bx)|}\frac{\E}{\sqrt{V_E(\gamma(\bx))}}. 
\end{split}
\end{equation} 
If $\gamma$ is parametrised by arc length, $|\dot \gamma(\bx)|=1$ and $\ddot{\gamma}(\bx)=-k(\bx)n(\bx)=-k(\bx)e^{i\bx}$, where $k(\bx)$ is the curvature of $\gamma$ in $\bx$: Eqs.(\ref{outer der def}) simplify then in 
\begin{equation}\label{outer second der arc}
\begin{split}
	&\partial_{\xi_0}^2S_E(\gamma(\bx),\gamma(\bx))=\partial_{\xi_1}^2S_E(\gamma(\bx),\gamma(\bx))=\frac{\E}{2|\gamma(\bx)|\sqrt{\rule{0pt}{2ex}V_E(\gamma(\bx))}}+\frac{\sqrt{\rule{0pt}{2ex}V_E(\gamma(\bx))}}{|\gamma(\bx)|}\left(|\gamma(\bx)|k(\bx)-1\right), \\
	&\partial_{\xi_0,\xi_1}^2S_E(\gamma(\bx),\gamma(\bx))=\partial_{\xi_1,\xi_0}^2S_E(\gamma(\bx),\gamma(\bx))=-\frac{\E}{2|\gamma(\bx)|\sqrt{\rule{0pt}{2ex}V_E(\gamma(\bx))}}. 
\end{split}
\end{equation}
Eq.(\ref{outer second der arc}) highlights that the second term in $\partial^2_{\xi_0}S(\gamma(\bx),\gamma(\bx))$ represents a perturbation of the homogeneus second derivative with respect to the circular case, where $\left(|\gamma(\bx)|k(\bx)-1\right)=0$ for every $\bx\in[0,2\pi]$. 

\subsection{Inner dynamics: computation of the derivatives of $S_I(\tilde{\xi},\xi_1)$}

With reference to the Notation \ref{notazione}, from Proposition \ref{lem levi civita} we know that the fixed ends inner problem 

\begin{equation}\label{inner LC initial}
\begin{cases}
	(\Prob_I)[z(s)] &s\in[0,T_I], \\
	z_I(0)=z_0^I, z_I(T_I)=z_1^I.
\end{cases}
\end{equation}
is conjugated, by means of the Levi-Civita transformations, to the regularised problem 
\begin{equation*}
\begin{cases}
	(\Prob_{LC})[w(\tau)] &\tau\in[-T,T],\\
	w(-T)=w_0, w(T)=w_1
\end{cases}
\end{equation*}
where $\Omega^2=2(\Eh)$, $E=\mu$, $w_0^2=z_0^I$, $w_1^2=z_1^I$ and $\tau=\tau(s)$ such that $\frac{d\tau}{ds}=\frac{1}{2|z(s)|}$.
In the following, we will work with the Levi-Civita variables, taking respectively for $w_0$ the negative determination of the square root of $z_0^I$ and for $w_1$ the positive determination of the square root of $z_1^I$, namely, in polar coordinates, 
\begin{equation}\label{determinazioni}
z_0^I=\vert z_0^I\vert e^{i\theta_0}\Rightarrow w_0=-\sqrt{\vert z_0^I\vert}e^{i\frac{\theta_0}{2}},\quad z_1^I=\vert z_1^I\vert e^{i\theta_1}\Rightarrow w_1=\sqrt{\vert z_1^I\vert}e^{i\frac{\theta_1}{2}}. 
\end{equation}
To compute the derivatives of $S_I(\gamma(\xi_0),\gamma(\xi_1))$, define then 
\begin{equation*}
L_I(z(t))=\int_0^1\vert\dot{z}(t)\vert\sqrt{V_I(z(t))}dt=\int_0^1\vert\dot{z}(t)\vert\sqrt{\mathcal{E}+h+\frac{\mu}{\vert z(t)\vert}}dt,
\end{equation*}
where $t\in[0,1]$ is the usual geodesic time. Passing to the Levi-Civita plane: 
\begin{equation*}
\begin{split}
	L_I(z)=2\int_0^1\vert\dot{w}(t)\vert\sqrt{2(\mathcal{E}+h)\frac{\vert w(t)\vert^2}{2}+\mu}dt=2\int_0^1\vert\dot{w}(t)\vert\sqrt{\frac{\Omega^2}{2}\vert w(t)\vert^2+E}=2\tilde{L}_I(w)
\end{split}
\end{equation*}
According to the choice for the initial and final point of $\omega(\tau)$ defined in (\ref{determinazioni}), in the Levi-Civita plane the function $S(\xi_0,\xi_1)$ can be written as 
\begin{equation*}
S_I(\xi_0,\xi_1)=d_I(\gamma(\xi_0),\gamma(\xi_1))=2\tilde{d}_I(\phi_-(\xi_0),\phi_+(\xi_1))=2\tilde{S}_I(\xi_0,\xi_1),
\end{equation*}
where $\tilde{d}_I$ is the distance associated to $\tilde{L}_I$ and $\phi_-(\xi), \phi_+(\xi)$ are defined in two neighborhoods respectively of $\xi_0$ and $\xi_1$ as follows: given $\epsilon>0$ and expressing $\gamma(\xi)$ in polar coordinates, namely, $\gamma(\xi)=\rho_{\gamma}(\xi) e^{i\theta_{\gamma}(\xi)}$: 
\begin{equation}\label{def phi}
\begin{split}
	\phi_-(\xi):[\xi_0-\epsilon,\xi_0+\epsilon]\rightarrow\mathbb{C}\simeq\mathbb{R}^2,\quad \phi_-(\xi)=-\sqrt{\rho_{\gamma}(\xi)}e^{i\frac{\theta_{\gamma}(\xi)}{2}} \\
	\phi_+(\xi):[\xi_1-\epsilon,\xi_1+\epsilon]\rightarrow\mathbb{C}\simeq\mathbb{R}^2,\quad \phi_+(\xi)=\sqrt{\rho_{\gamma}(\xi)}e^{i\frac{\theta_{\gamma}(\xi)}{2}} 
\end{split}
\end{equation}
Equations (\ref{def phi}) allow us to compute the transformed of $\gamma(\bar{\xi})$, $\dot{\gamma}(\bar{\xi})$, $\ddot{\gamma}(\bar{\xi})$, seen both as initial and ending point of our arc. Without loss of generalization, let us suppose that $\gamma(\bx)=|\gamma(\bx)|(1,0)$ and $\dot{\gamma}(\bx)=|\dot{\gamma}(\bx)|(0,1)$:  using the relation $\phi_{\pm}(\xi)^2=\gamma(\xi)$, one has
\begin{equation*}
\begin{aligned}
	&\phi_-(\bar{\xi})=\sqrt{|\gamma(\bx)|}(-1,0),\quad\phi_+(\bar{\xi})=\sqrt{|\gamma(\bx)|}(1,0)\\
	&\dot{\phi}_-(\bar{\xi})=\frac{|\dot{\gamma}(\bx)|}{2\sqrt{|\gamma(\bx)|}}(0,-1)=\frac{|\dot{\gamma}(\bx)|}{2\sqrt{|\gamma(\bx)|}}t_-(\bar{\xi}),\\
	&\dot{\phi}_+(\bar{\xi})=\frac{|\dot{\gamma}(\bx)|}{2\sqrt{|\gamma(\bx)|}}(0,1)=\frac{|\dot{\gamma}(\bx)|}{2\sqrt{|\gamma(\bx)|}}t_+(\bar{\xi})\\ 
\end{aligned}
\end{equation*}
where $t_-(\bx)=(0,-1)$ and $t_+(\bx)=(0,1)$, 
and $\ddot{\phi}_{\pm}(\bx)$ satisfy the equations $\ddot{\gamma}(\bx)=2(\dot{\phi}_{\pm}^2(\bx)+\phi_{\pm}(\bx)\ddot{\phi}_{\pm}(\bx))$. \\
In order to compute the derivatives of $S_I(\bar{\xi},\bar{\xi})$, we can use the same techniques used in Section \ref{ssec: outer der} for the outer dynamics, taking into account that, in the Levi-Civita plane, the starting and final point are different. \\
Let us start with the derivation of the homotetic equilibrium orbit: in the physical plane, it corresponds to the ejection-collision solution $\hat{z}_I(s)$ of the fixed-end problem 
\begin{equation*}
\begin{cases}
	(\Prob_I)[z(s)], &s\in[0,T_I], \\
	z(0)=z(T_I)=\gamma(\bx), 
\end{cases}
\end{equation*}
which corresponds, in the Levi-Civita variables, to the solution $w_0(\tau)$ of the problem
\begin{equation*}
\begin{cases}
	(\Prob_{LC})[w(\tau)], &\tau\in[-T,T], \\
	w(-T)=\phi_-(\bx),w(T)=\phi_+(\bx), 
\end{cases}
\end{equation*}
from which one obtains
\begin{equation*}
\begin{split}
	&w_0(\tau)=\frac{\sqrt{2E}}{\Omega}\sinh{(\Omega \tau)}(1,0), \quad T=\frac{1}{\Omega}\arcsinh{\left(\frac{\Omega\sqrt{|\gamma(\bx)|}}{\sqrt{2E}}\right)}\\
	&\Rightarrow w_0'(-T)=w_0'(T)=\sqrt{2E+\Omega^2|\gamma(\bx)|}(1,0). 
\end{split}
\end{equation*}
Proceeding as in (\ref{ext prime der}), one has then
\begin{equation*}
\partial_{\xi_0}S_I(\bar{\xi},\bar{\xi})=-\frac{2}{\sqrt{2}}w_0'(-T)\cdot \dot{\phi}_-(\bar{\xi})=0, \quad\partial_{\xi_1}S_I(\bar{\xi},\bar{\xi})=\frac{2}{\sqrt{2}}w_0'(T)\cdot \dot{\phi}_+(\bar{\xi})=0.
\end{equation*}
As for the second derivatives, taking into account that $\phi_{\pm}$ are not parametrised by arc length: 
\begin{equation}\label{inner sec der}
\begin{aligned}
	&\partial_{\xi_0}^2S_I(\bar{\xi},\bar{\xi})&&=2\partial_{\xi_0}^2\tilde{S}_I(\bar{\xi},\bar{\xi})=2\nabla_{w_0}^2\tilde{d}_I(\phi_-(\bar{\xi}),\phi_+(\bar{\xi}))\dot{\phi}_-(\bar{\xi})\cdot \dot{\phi}_-(\bar{\xi})+2\nabla_{w_0}\tilde{d}_I(\phi_-(\bar{\xi}),\phi_+(\bar{\xi}))\cdot\ddot{\phi}_-(\bar{\xi})=\\
	& &&=\frac{|\dot{\gamma}(\bx)|^2}{2|\gamma(\bx)|}\nabla_{w_0}^2\tilde{d}_I(\phi_-(\bar{\xi}),\phi_+(\bar{\xi}))t_-(\bar{\xi})\cdot t_-(\bar{\xi})+2\nabla_{w_0}\tilde{d}_I(\phi_-(\bar{\xi}),\phi_+(\bar{\xi}))\cdot\ddot{\phi}_-(\bar{\xi})=\\
	& &&=-\frac{|\dot{\gamma}(\bx)|^2}{2\sqrt{2}|\gamma(\bx)|}\frac{d}{d\tau}\left(\partial_{t_-(\bar{\xi}),v}w_0\right)(-T)\cdot t_-(\bar{\xi})-\sqrt{2}w_0'(-T)\cdot \ddot{\phi_-}(\bx), \\
	&\partial_{\xi_1}^2S_I(\bar{\xi},\bar{\xi})&&=\frac{|\dot{\gamma}(\bx)|^2}{2\sqrt{2}|\gamma(\bx)|}\frac{d}{d\tau}\left(\partial_{t_+(\bar{\xi}),w}w_0\right)(T)\cdot t_+(\bar{\xi})+\sqrt{2}w_0'(T)\cdot \ddot{\phi_+}(\bx)\\
	&\partial_{\xi_0,\xi_1}^2S_I(\bar{\xi},\bar{\xi})&&=-\frac{|\dot{\gamma}(\bx)|^2}{2\sqrt{2}|\gamma(\bx)|}\frac{d}{d\tau}\left(\partial_{t_+(\bar{\xi}),w}w_0\right)(-T)\cdot t_-(\bar{\xi}),\\
	&\partial_{\xi_1,\xi_0}^2S_I(\bar{\xi},\bar{\xi})&&=\frac{|\dot{\gamma}(\bx)|^2}{2\sqrt{2}|\gamma(\bx)|}\frac{d}{d\tau}\left(\partial_{t_-(\bar{\xi}),v}w_0\right)(T)\cdot t_+(\bar{\xi}). 
\end{aligned}
\end{equation}
We can compute the variations $\partial_{t_-(\bar{\xi}),v}w_0(\tau)$, $\partial_{t_+(\bar{\xi}),w}w_0(\tau)$ as in Section \ref{ssec: outer der}: by imposing $\partial_{t_-(\bar{\xi}),v}w_0(\tau)=\tilde{g}_0(\tau)=g_0(\tau)t_-({\bar{\xi}})$ and $\partial_{t_+(\bar{\xi}),w}w_0(\tau)=\tilde{g}_1(\tau)=g_1(\tau)t_+({\bar{\xi}})$ we have that $g_0(\tau)$ and $g_1(\tau)$ are solutions of the two one-dimensional systems
\begin{equation*}
\begin{cases}
	g_0''(\tau)=\Omega^2 g_0(\tau), \tau\in[-T,T]\\
	g_0(-T)=1, g_0(T)=0,
\end{cases}
\begin{cases}
	g_1''(\tau)=\Omega^2 g_1(\tau), \tau\in[-T,T]\\
	g_1(-T)=0, g_1(T)=1,
\end{cases}
\end{equation*}
and then we obtain
\begin{equation*}
\begin{aligned}
	&\frac{d}{d\tau}(\partial_{t_-(\bar{\xi}),v}w_0)(-T)=-\frac{E+\Omega^2|\gamma(\bx)|}{\sqrt{\rule{0pt}{2ex}|\gamma(\bx)|}\sqrt{\rule{0pt}{2ex}2E+\Omega^2|\gamma(\bx)|}}t_-({\bar{\xi}}),\\ &\frac{d}{d\tau}(\partial_{t_-(\bar{\xi}),v}w_0)(T)=-\frac{E}{\sqrt{\rule{0pt}{2ex}|\gamma(\bx)|}\sqrt{\rule{0pt}{2ex}2E+\Omega^2|\gamma(\bx)|}}t_-({\bar{\xi}}),\\
	&\frac{d}{d\tau}(\partial_{t_+(\bar{\xi}),w}w_0)(-T)=\frac{E}{\sqrt{\rule{0pt}{2ex}|\gamma(\bx)|}\sqrt{\rule{0pt}{2ex}2E+\Omega^2|\gamma(\bx)|}}t_+({\bar{\xi}}),\\ &\frac{d}{d\tau}(\partial_{t_+(\bar{\xi}),w}w_0)(T)=\frac{E+\Omega^2|\gamma(\bx)|}{\sqrt{\rule{0pt}{2ex}|\gamma(\bx)|}\rule{0pt}{2ex}\sqrt{\rule{0pt}{2ex}2E+\Omega^2|\gamma(\bx)|}}t_+({\bar{\xi}}). 
\end{aligned}
\end{equation*}
Then, taking into account (\ref{inner sec der}) and recalling that $E=\mu$, $\Omega^2=\mathcal{E}+h$, we finally obtain (recall that we are assuming that $\gamma(\bx)\parallel(1,0)$)
\begin{equation}\label{inner der fin}
\begin{aligned}
	&\partial_{\xi_0}^2S_I(\bar{\xi},\bar{\xi})&&=\frac{|\dot{\gamma}(\bx)|^2}{4|\gamma(\bx)|^2}\frac{\mu+2(\Eh)|\gamma(\bx)|}{\sqrt{\rule{0pt}{2ex}V_I(\gamma(\bx))}}-2\sqrt{|\gamma(\bx)|}\sqrt{V_I(\gamma(\bx))}(1,0)\cdot\ddot{\phi}_-(\bx),\\
	&\partial_{\xi_1}^2S_I(\bar{\xi},\bar{\xi})&&=\frac{|\dot{\gamma}(\bx)|^2}{4|\gamma(\bx)|^2}\frac{\mu+2(\Eh)|\gamma(\bx)|}{\sqrt{\rule{0pt}{2ex}V_I(\gamma(\bx))}}+2\sqrt{|\gamma(\bx)|}\sqrt{V_I(\gamma(\bx))}(1,0)\cdot\ddot{\phi}_+(\bx),\\
	&\partial_{\xi_0,\xi_1}^2S_I(\bar{\xi},\bar{\xi})&&=\partial_{\xi_1,\xi_0}^2S_I(\bar{\xi},\bar{\xi})=\frac{|\dot{\gamma}(\bx)|^2}{4|\gamma(\bx)|^2}\frac{\mu}{\sqrt{\rule{0pt}{2ex}V_I(\gamma(\bar{\xi}))}}.
\end{aligned}
\end{equation}
If $\gamma(\xi)$ is parametrised by arc length, $|\dot{\gamma}(\bx)|=1$ and $\ddot{\gamma}(\bx)k(\bx)=(-1,0)$, then 
\begin{equation*}
\ddot{\phi}_-(\bx)=\frac{1}{2\sqrt{\rule{0pt}{2ex}|\gamma(\bx)|}}\left(k(\bx)-\frac{1}{2|\gamma(\bx)|}\right)(1,0)=-\ddot{\phi}_+,
\end{equation*}
and Eqs.(\ref{inner der fin}) simplify as
\begin{equation}\label{inner der arc}
\begin{aligned}
	&\partial_{\x0}^2S_I(\bx,\bx)=\partial_{\x1}^2S_I(\bx,\bx)=-\frac{\mu}{4|\gamma(\bx)|^2\sqrt{\rule{0pt}{2ex}V_I(\gamma(\bx))}}-\sqrt{V_I(\gamma(\bx))}\left(k(\bx)-\frac{1}{|\gamma(\bx)|}\right),\\
	&\partial_{\x0,\x1}^2S_I(\bx,\bx)=\partial_{\x1,\x0}^2S_I(\bx,\bx)=\frac{\mu}{4|\gamma(\bx)|^2\sqrt{\rule{0pt}{2ex}V_I(\gamma(\bx))}}
\end{aligned}
\end{equation}

\subsection{Stability properties of $(\bar{\xi},0)$}\label{ssec stab}

Let us now suppose that $\gamma(\bx)$ is parametrised by arc length (the general case can be treated in the same way, taking into account the explicit expression of $\ddot{\gamma}(\bar{\xi})$): taking together (\ref{outer second der arc}) and (\ref{inner der arc}), one can see that they can be written in the form
\begin{equation}\label{E0,I0}
\begin{aligned}
	&\partial_{\xi_0}^2S_E(\bar{\xi},\bar{\xi})=\partial_{\xi_1}^2S_E(\bar{\xi},\bar{\xi})=E_0+\varepsilon_E,&&\\
	&\partial_{\xi_0,\xi_1}^2S_E(\bar{\xi},\bar{\xi})=\partial_{\xi_1,\xi_0}^2S_E(\bar{\xi},\bar{\xi})=-E_0,&&\\
	&E_0=\frac{\mathcal{E}}{2|\gamma(\bx)|\sqrt{\rule{0pt}{2ex}V_E( \gamma(\bar{\xi}))}}, &&\varepsilon_E=(|\gamma(\bx)|k(\bar{\xi})-1)\frac{\sqrt{\rule{0pt}{2ex}V_E(\gamma(\bar{\xi}))}}{|\gamma(\bx)|},\\
	&\partial_{\xi_0}^2S_I(\bar{\xi},\bar{\xi})=\partial_{\xi_1}^2S_I(\bar{\xi},\bar{\xi})=I_0+\varepsilon_I,&&\\
	&\partial_{\xi_0,\xi_1}^2S_I(\bar{\xi},\bar{\xi})=\partial_{\xi_1,\xi_0}^2S_I(\bar{\xi},\bar{\xi})=-I_0,&&\\
	&I_0=-\frac{\mu}{4|\gamma(\bx)|^2\sqrt{\rule{0pt}{2ex}V_I( \gamma(\bar{\xi}))}}, &&\varepsilon_I=-\left(k(\bar{\xi})-\frac{1}{|\gamma(\bx)|}\right)\sqrt{\rule{0pt}{2ex}V_I(\gamma(\bar{\xi}))},
\end{aligned}
\end{equation}
The terms $\varepsilon_{E\backslash I}$ can be seen as the perturbations induced to the second derivatives when the domain's boundary $\partial D$ is not a circle. Turning to the matrices defined in Section \ref{ssec: Jac 1}, we have that 
\begin{equation*}
D_{(\tilde{\xi},\xi_1,\alpha_1)}\Phi(\bar{q})=
\begin{pmatrix}
	\frac{\partial_{\xi_0,\xi_1}^2S_E(\bar{\xi},\bar{\xi})}{\sqrt{V_E(\gamma(\bar{\xi}))}} & 0 &0\\
	\partial_{\xi_1}^2S_E(\bar{\xi},\bar{\xi})+\partial_{\xi_0}^2S_I(\bar{\xi},\bar{\xi})
	& \partial_{\xi_0,\xi_1}^2S_I(\bar{\xi},\bar{\xi})&0\\
	-\frac{\partial_{\xi_0,\xi_1}S_I(\bar{\xi},\bar{\xi})}{\sqrt{V_E(\gamma(\bar{\xi}))}}&-\frac{\partial_{\xi_1}^2S_I(\bar{\xi},\bar{\xi})}{\sqrt{V_E(\gamma(\bar{\xi}))}}&1 
\end{pmatrix},
\end{equation*}
whose determinant is given by 
\begin{equation*}
\det{\left(D_{(\tilde{\xi},\xi_1,\alpha_1)}\Phi(\bar{q})\right)}=-\frac{\mathcal{E}\mu}{8|\gamma(\bx)|^3\sqrt{\rule{0pt}{2ex}V_I(\gamma(\bar{\xi}))}V_E(\gamma(\bar{\xi}))}<0
\end{equation*}
in the Hill's region $\mathcal{H}$. The implicit function theorem can be then applied and we have that there exist $I_1,I_2,I_2$, $J_1,J_2$ neighborhoods respectively of $\bar{\xi}$ and $0$ and there is a function $\Psi:I_1\times J_1\rightarrow I_2\times I_3\times J_2$ such that for every $(\xi_0,\alpha_0)\in I_1\times J_1$ one has $\Phi(\xi_0,\alpha_0,\Psi(\xi_0,\alpha_0))=0$. Moreover 
\begin{equation*}
D_{(\xi_0,\alpha_0)}\Psi(\bar p)=-\left(D_{(\tilde{\xi},\xi_1,\alpha_1)}\Phi(\bar{q})\right)^{-1}D_{(\xi_0,\alpha_0)}\Phi(\bar{q}).
\end{equation*}
The function $F: (\xi_0, \alpha_0)\mapsto(\xi_1(\xi_0,\alpha_0),\alpha_1(\xi_0,\alpha_0))$ is given by the last two components of $\Psi$, then $DF(\bar p)$ is composed by the last two rows of $D_{(\xi_0,\alpha_0)}\Psi(\bar p)$. Direct computations show that 
\begin{equation}\label{jac gen}
DF(\bar p)=\begin{pmatrix}
	A_{11}&A_{12}\\
	A_{21}&A_{22}
\end{pmatrix},
\end{equation}
where
\begin{equation*}
\begin{split}
	&A_{11}=1+\frac{2\varepsilon_E+\varepsilon_I}{I_0}+\frac{\varepsilon_E(\varepsilon_E+\varepsilon_I+I_0)}{E_0 I_0},\\
	&A_{12}=\sqrt{V_E(\gamma(\bar{\xi}))}\left(\frac{1}{I_0}+\frac{1}{E_0}\right)+\sqrt{V_E(\gamma(\bar{\xi}))}\frac{\varepsilon_E+\varepsilon_I}{E_0 I_0},\\
	&A_{21}=\frac{2\varepsilon_E(\varepsilon_I+I_0)+\varepsilon_I(\varepsilon_I+2I_0)}{I_0\sqrt{\rule{0pt}{2ex}V_E(\gamma(\bar{\xi}))}}+\frac{\varepsilon_E[\varepsilon_E(\varepsilon_I+I_0)+\varepsilon_I(\varepsilon_I+2I_0)]}{E_0I_0\sqrt{\rule{0pt}{2ex}V_E(\gamma(\bar{\xi}))}}\\
	&A_{22}=1+\frac{\varepsilon_E}{E_0}+\frac{\varepsilon_I(2I_0+\varepsilon_I+E_0+\varepsilon_E)}{E_0 I_0}
\end{split}
\end{equation*}
The stability properties of the equilibrium in $(\bar{\xi},0)$ can be studied looking at the eigenvalues of $DF(\bar p)$, see \cite{MR3293130}: let us denote them with $\lambda_1$ and $\lambda_2$. 
Direct computations show that $\det{(DF(\bar p))}=1$: this is a completely general fact, as the map $F$ describes a conservative system, and, from an algebraic point of view, implies that $\lambda_1\lambda_2=1$. Therefore we can have two cases: 
\begin{itemize}
\item $\lambda_1,\lambda_2\in\mathbb{R}\Rightarrow \lambda_1=1/\lambda_2$: if $\lambda_1\neq1$, then $(\bar{\xi},0)$ is an unstable saddle; 
\item $\lambda_1,\lambda_2\in\mathbb{C}/\mathbb{R}\Rightarrow\lambda_1=\overline{\lambda}_2$ and $\lambda_1,\lambda_2\in S^1$: then $(\bar{\xi},0)$ is a stable center.  
\end{itemize} 
We can distnguish between the two cases by considering the characteristic polynomial of $DF((
\bar p)$. 
\begin{rem}\label{deltastab}
Denoted by $p(\lambda)=a\lambda^2+b\lambda+c$ the characteristic polynomial of $DF(\bar{\xi},0)$, let $\Delta=b^2-4ac$ its discriminant. Then 
\begin{itemize}
	\item if $\Delta>0\Rightarrow(\bar{\xi},0)$ is a saddle for $F$;
	\item if $\Delta<0\Rightarrow(\bar{\xi},0)$ is a center for $F$; 
\end{itemize}
\end{rem}
The value of $\Delta$ with respect to the physical quantities of the problem can be directly computed: it results that 
\begin{equation*}
\Delta=A B C D,
\end{equation*}
where
\begin{equation*}
\begin{split}
	&A=\frac{16}{\mathcal{E}^2\mu^2}\left(\sqrt{V_I(\gamma(\bar{\xi})}-\sqrt{V_E(\gamma(\bar{\xi}))}\right)\left(|\gamma(\bx)|k(\bar{\xi})-1\right), \\
	&B=\mathcal{E}-\left(|\gamma(\bx)|k(\bar{\xi})-1\right)\left(\sqrt{V_I(\gamma(\bar{\xi}))}-\sqrt{V_E(\gamma(\bar{\xi}))}\right)\sqrt{V_E(\gamma(\bar{\xi}))},\\ 
	&C=-\mu\sqrt{V_E(\gamma({\bar{\xi}}))}+2|\gamma(	\bx)|B\sqrt{V_I(\gamma(\bx))}, \\
	&D=\mu+2|\gamma(\bx)|		\left(|\gamma(\bx)|k(\bar{\xi})-1\right)\sqrt{V_I(\gamma(\bar{\xi}))}\left(\sqrt{V_I(\gamma(\bar{\xi}))}-\sqrt{V_E(\gamma(\bar{\xi}))}\right).
\end{split}
\end{equation*}

\section{A direct investigation: elliptic domains}\label{sec:alliptic_domains}
When the expression of $\gamma(\xi)$ is given, the general theory developed in Section \ref{sec:general_stab} can be used to study the effective stability of the fixed points of the map $F$. In this Section we investigate the existence and stability of equilibrium orbits for our dynamical system when $D$ is an elliptic domain. Let us suppose that $\partial D$ is an ellipse with semimajor axis $a=1$ and eccentricity $0\leq e<1$. Denoted by $b=a\sqrt{1-e^2}=\sqrt{1-e^2}$ the semiminor axis, one can parametrise $\partial D$ as 
\begin{equation*}
	\gamma(\xi)=(\cos{\xi},b \sin{\xi}),\text{ }\xi\in[0,2\pi],  
\end{equation*}
which can be written as 
\begin{equation*}
	\begin{aligned}
		&\gamma(\theta)=(1+f(e,\theta))e^{i\theta}, \\
		&f(e,\theta)=\frac{\sqrt{1-e^2}}{(1-e^2\cos^2{\theta})}-1, \quad \theta\in[0,2\pi], \quad e\in[0,1). 
	\end{aligned}
\end{equation*}
From direct computations and from Remark \ref{oss fixed points}, one has that the orbit with initial conditions $z(0)=\gamma(\bx),\text{ }z'(0)=\sqrt{V_E(z(0))}z(0)/|z(0)|$ is an homotetic equilibrium orbit if and only if $\bx=k\pi/2$, $\textcolor{black}{k\in\{0,1,2,3,4\}}$: due to the symmetry of the problem, we can restrict our study to the two cases $\bx_0=0$ and $\bx_1=\pi/2$. 
We have that 
\begin{equation*}
	\begin{aligned}
		&\gamma(0)=(1,0), &&\dot{\gamma}(0)=(0,b), &&\ddot{\gamma}(0)=(-1,0)\\
		&\gamma(\pi/2)=(0,b), &&\dot{\gamma}(\pi/2)=(-1,0), &&\ddot{\gamma}(\pi/2)=(0,-b):
	\end{aligned}
\end{equation*}
The stabilty properties of the $F$-fixed points $(\bx_0,0)$ and $(\bx_1,0)$  can be deduced as in Section \ref{sec:general_stab}: in particular, from Eqs.(\ref{outer der def}) and (\ref{inner sec der}), one obtains
\begin{equation}\label{ellisse0}
	\begin{aligned}
		&\partial_{\xi_0}^2S_I(\bx_0,0)=\partial_{\xi_1}^2S_I(\bx_0,0)=I_0^{(0)}+\varepsilon_I^{(0)}, \quad
		\partial_{\xi_0,\xi_1}^2S_I(\bx_0,0)=\partial_{\xi_1,\x0}^2S_I(\bx_0,0)=-I_0^{(0)}, \\
		&I_0^{(0)}=-\frac{(1-e^2)\mu}{4\sqrt{\rule{0pt}{2ex}V_I(\gamma(\bx_0))}}, \quad \varepsilon_I^{(0)}=-e^2\sqrt{V_I(\gamma(\bx_0))},\\
		&\partial_{\xi_0}^2S_E(\bx_0,0)=\partial_{\xi_1}^2S_E(\bx_0,0)=E_0^{0}+\varepsilon_E^{(0)}, \quad
		\partial_{\xi_0,\xi_1}^2S_E(\bx_0,0)=\partial_{\xi_1}^2S_E(\bx_0,0)=-E_0^{(0)}, \\
		&E_0^{(0)}=\frac{(1-e^2)\mathcal{E}}{2\sqrt{\rule{0pt}{2ex}V_E(\gamma(\bx_0))}}, \quad \varepsilon_E^{(0)}=e^2\sqrt{V_E(\gamma(\bx_0))},\\
	\end{aligned}
\end{equation}

\begin{equation}\label{ellisse90}
	\begin{aligned}
		&\partial_{\xi_0}^2S_I(\bx_1,0)=\partial_{\xi_1}^2S_I(\bx_1,0)=I_0^{(1)}+\varepsilon_I^{(1)}, \quad
		\partial_{\xi_0,\xi_1}^2S_I(\bx_1,0)=\partial_{\xi_1}^2S_I(\bx_1,0)=-I_0^{(1)}, \\
		&I_0^{(1)}=-\frac{\mu}{4(1-e^2)\sqrt{\rule{0pt}{2ex}V_I(\gamma(\bx_1))}}, \quad \varepsilon_I^{(1)}=\frac{e^2}{\sqrt{1-e^2}}\sqrt{\rule{0pt}{2ex}V_I(\gamma(\bx_1))},\\
		&\partial_{\xi_0}^2S_E(\bx_1,0)=\partial_{\xi_1}^2S_E(\bx_1,0)=E_0^{(1)}+\varepsilon_E^{(1)}, \quad \partial_{\xi_0,\xi_1}^2S_E(\bx_1,0)=\partial_{\xi_1}^2S_E(\bx_1,0)=-E_0^{(1)}, \\
		&E_0^{(1)}=\frac{\mathcal{E}}{2\sqrt{1-e^2}\sqrt{\rule{0pt}{2ex}V_E(\gamma(\bx_1))}}, \quad \varepsilon_E^{(1)}=-\frac{e^2}{\sqrt{1-e^2}}\sqrt{V_E(\gamma(\bx_1))},\\
	\end{aligned}
\end{equation}
	and then we have 
	\begin{equation}\label{delta01}
		\begin{aligned}
			&\Delta^{(0)}=A^{(0)}B^{(0)}C^{(0)}D^{(0)}, \qquad\qquad\qquad\qquad\qquad\qquad\qquad \Delta^{(1)}=A^{(1)}B^{(1)}C^{(1)}D^{(1)},\\
			&A^{(0)}=-\frac{16}{\E^2\mu^2}\frac{e^2}{1-e^2}\left(\sqrt{V_E(\gamma(\bar{\xi}_0))}-\sqrt{V_I(\gamma(\bar{\xi}_0))}\right)\\ 
			&A^{(1)}=\frac{16e^2}{\E^2\mu^2}\left(\sqrt{V_E(\gamma(\bar{\xi}_1))}-\sqrt{V_I(\gamma(\bar{\xi}_1))}\right)\\
			&B^{(0)}=\mu+2\frac{e^2}{1-e^2}\sqrt{V_I(\gamma(\bar{\xi}_0))}\left(\sqrt{V_I(\gamma(\bar{\xi}_0))}-\sqrt{V_E(\gamma(\bar{\xi}_0))}\right)\\
			&B^{(1)}=\mu-2e^2\sqrt{1-e^2}\sqrt{V_I(\gamma(\bar{\xi}_1))}\left(\sqrt{V_I(\gamma(\bar{\xi}_1))}-\sqrt{V_E(\gamma(\bar{\xi}_1))}\right)\\
			&C^{(0)}=\E+\frac{e^2}{1-e^2}\sqrt{V_E(\gamma(\bar{\xi}_0))}\left(\sqrt{V_E(\gamma(\bar{\xi}_0))}-\sqrt{V_I(\gamma(\bar{\xi}_0))}\right)\\
			&C^{(1)}=\E-e^2\sqrt{V_E(\gamma(\bar{\xi}_1))}\left(\sqrt{V_E(\gamma(\bar{\xi}_1))}-\sqrt{V_I(\gamma(\bar{\xi}_1))}\right)\\
			&D^{(0)}=-\mu\sqrt{V_E(\gamma(\bar{\xi}_0))}+2\sqrt{V_I(\gamma(\bar{\xi}_0))}C^{(0)}\\
			&D^{(1)}=-\mu\sqrt{V_E(\gamma(\bar{\xi}_1))}+2\sqrt{1-e^2}\sqrt{V_I(\gamma(\bar{\xi}_1))}C^{(1)}
		\end{aligned}
	\end{equation}		
\subsection{Asymptotic behaviours}\label{ssec: ecc piccola}
It is convenient to start the study of the elliptic case by investigating some of the properties of the first return map on a circular domain. When $e=0$, for every $\bar{\xi}\in[0,2\pi]$ the pair $(\bx,0)$ is an homotetic fixed point for $F$, with 
\begin{equation*}
	DF(\bx,0)
=\begin{pmatrix}
	1 & \sqrt{\E-\frac{\om}{2}}\left(\frac{2}{\E}\sqrt{\E-\frac{\om}{2}}-\frac{4}{\mu}\sqrt{\Eh+\mu}\right)\\0&1
\end{pmatrix}.
\end{equation*}
This is consistent with the expression of $F$ for a circular domain: when $D$ is a disk of radius $1$, from the central symmetry of both the domain and the inner and outer potentials one has that $F$ is a rigid translation of the form 
\begin{equation*}
	F_{circ}(\x0,\alpha_0)=(\x0+\theta(\alpha_0), \alpha_0),   
\end{equation*}
and, as a consequence, $\Delta_{circ}=0$ for every homotetic point $(\bx,0)$. The circular case represents then a degenerate case for the study of the linear stability of the homotetic points; nevertheless, the possibility to compute the explicit expression of $F_{circ}$ allows to study directly the map: considering the phase space $(\xi, \alpha)$, one has that the set $[0,2\pi]\times\{0\}$ is the invariant set containing all the homotetic points, and that all the \textcolor{black}{orbits} of $F_{circ}$ lie on the invariant lines $[0,2\pi]\times\{\bar{\alpha}\}$, where the value of $\theta(\bar{\alpha})$ determines their nature. The systematical study of the circular case, in a more convenient variational setting, is one of the subject of a further work \cite{IreneSusNew}. \\
Let us suppose that  $e>0$ and small. Recalling that $b=\sqrt{1-e^2}$, the expression of $\Delta^{(0)}$ and $\Delta^{(1)}$ in Eqs.(\ref{delta01}) can be expanded in Taylor series around $e=0$, obtaining, from direct computations,
\begin{gather}\label{espansione ecc piccole}
	\Delta^{(0)}=f_2e^2+f_4e^4+\mathcal{O}(e^6), \quad\Delta^{(1)}=g_2e^2+g_4e^4+\mathcal{O}(e^6),\\
	f_2=-g_2=-\frac{4\left(\sqrt{\E-\om/2}-\sqrt{\Eh+\mu}\right)\left(2\E\sqrt{\Eh+\mu}-\mu\sqrt{\E-\om/2}\right)}{\mu\E}. 
\end{gather}
Hence, when $e$ is sufficiently small, the sign of $\Delta^{(0)}$ and $\Delta^{(1)}$ is determined by the quantity $\left(2\E\sqrt{\Eh+\mu}-\mu\sqrt{\E-\om/2}\right)$. \\
Let us now suppose to fix the parameters related to te external dynamics, namely, $\E$ and $\omega$, and to let vary $\mu$ and $h$. If $e$ is small enough, $\Delta^{(0)}$ and $\Delta^{(1)}$ have opposite sign; in particular: 
\begin{itemize}
	\item if $\frac{\sqrt{\Eh+\mu}}{\mu}<\frac{\sqrt{2\E-\om}}{2\sqrt{2}\E}$, $\Delta^{(0)}<0$ and $\Delta^{(1)}>0$. Then, from  Remark \ref{deltastab}, one has that $(0,0)$ is a stable center and $(\pi/2,0)$ is an unstable saddle for $F$; 
	\item if $\frac{\sqrt{\Eh+\mu}}{\mu}>\frac{\sqrt{2\E-\om}}{2\sqrt{2}\E}$, for the same reasoning $(0,0)$ is a saddle and $(\pi/2,0)$ is a center.  
\end{itemize}
Fixing $\E$ and $\omega$, one has also: 
\begin{equation*}
\lim_{h\to\infty}\Delta^{(0)}=\lim_{\mu\to\infty}\Delta^{(0)}=\lim_{h\to\infty}\Delta^{(1)}=\lim_{\mu\to\infty}\Delta^{(1)}=\infty. 
\end{equation*}
As a final investigation on the asyntotical behaviour of $\Delta^{(0)}$ and $\Delta^{(1)}$, let us suppose to fix the physical parameters related to the inner dynamics and analyse the sign of the discriminants for $\E\to\infty$. From direct computations, one has 
\begin{equation*}
	\begin{aligned}
	\ell_0=\lim_{\E\to\infty}\Delta^{(0)}=\frac{(b^2-1 ) (2 h + 2 \mu + \om) (2 (b^2-1) h - 2 \mu + (b^2-1 ) \om)}{b^4\mu^2}\\
	\ell_1=\lim_{\E\to\infty}\Delta^{(1)}=\frac{b (b^2-1) (2 b h + 2 \mu + b^3 \om) (2 (b^2-1) h + 
		b (2 \mu + b (b^2-1) \om))}{\mu^2}. 
	\end{aligned}
\end{equation*}
In particular, it results $\ell_0>0$ for every fixed $0<b<1$ and $h,\mu, \omega>0$ and 
\begin{equation*}
	\ell_1>0 \Leftrightarrow 0<b<1 \text{ and }0<\mu<\bar{\bar{\mu}}=\frac{(b^2-1)(2h+b^2\om)}{2b}. 
\end{equation*}

Taking together the above considerations, one can give some general results, which hold for small eccentricity or for high values of $h$, $\mu$ or $\E$. 
\begin{prop}\label{eccpiccola}
	 For every $\E,\omega>0$ with $\om>2\E$ we have:  
	\begin{enumerate}
	\item[I)] for every fixed $h, \mu>0$: 
		\begin{itemize}
			\item[Ia)] if $\frac{\sqrt{\E+h+\mu}}{\mu}<\frac{\sqrt{2\E-\om}}{2\sqrt{2}\E}$, then there is $\bar{e}\in(0,1)$ such that, for every $e\in(o,\bar{e})$: 
 $\bar{z}_0$ is stable and $\bar{z}_{\pi/2}$ is unstable;
 			\item[Ib)] if $\frac{\sqrt{\E+h+\mu}}{\mu}>\frac{\sqrt{2\E-\om}}{2\sqrt{2}\E}$,  then there is $\bar{e}\in(0,1)$ such that, for every $e\in(o,\bar{e})$: 
$\bar{z}_0$ is unstable and $\bar{z}_{\pi/2}$ is stable.;
		\end{itemize}
	\item[II)] for all fixed $e\in(0,1)$, $h>0$, there is $\bar{\mu}>0$ such that for every  $\mu>\bar{\mu}$ the homotetic fixed points $(0,0)$ and $(\pi/2,0)$ are saddles; 
	\item[III)] for all fixed $e\in(0,1)$, $\mu>0$, there is $\bar{h}>0$ such that for every  $h>\bar{h}$ the homotetic fixed points $(0,0)$ and $(\pi/2,0)$ are saddles. 
	\end{enumerate}
For all fixed $e\in(0,1)$, $  h, \omega>0$ there are $\bar{\E}>0$ and $\bar{\bar{\mu}}>0$ such that, if $\E>\bar{\E}$: 
\begin{enumerate}
	\item[IVa)] if $\mu>\bar{\bar{\mu}}$, $(0,0)$ is a saddle and $(\pi/2,0)$ is a center; 
	\item[IVb)] if $0<\mu<\bar{\bar{\mu}}$, $(0,0)$ and $(\pi/2,0)$ are both saddles. 
\end{enumerate}
  
With the additional hypothesis that the $F$ is well defined on the whole ellipse, in cases (II) and (III), as well as (IVb), there must be at least a stable fixed point with $\xi_0\in(0,\pi/2)$; hence, by symmetry, $F$ admits at least $4$ stable period one non-homotetic fixed points. 
\end{prop}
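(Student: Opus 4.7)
The plan is to derive the four analytic statements (I)--(IV) directly from the explicit formulas \eqref{delta01} for $\Delta^{(0)}, \Delta^{(1)}$, together with the Taylor expansion \eqref{espansione ecc piccole} in $e$ and the asymptotic limits for $h, \mu, \E\to\infty$ displayed just before the proposition; the final existence claim for non-homotetic stable fixed points will rest on a Poincar\'e-type index balance for the area-preserving first return map.

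For (I), \eqref{espansione ecc piccole} gives $\Delta^{(0)}(e) = f_2 e^2 + O(e^4)$ and $\Delta^{(1)}(e) = -f_2 e^2 + O(e^4)$, so for sufficiently small $e > 0$ the two discriminants have opposite, nonzero signs controlled by $\mathrm{sgn}(f_2)$. Since $\sqrt{V_I(\gamma(\bar\xi_i))} > \sqrt{V_E(\gamma(\bar\xi_i))}$ at both homotetic points, the factor $\sqrt{\E - \om/2} - \sqrt{\E+h+\mu}$ in $f_2$ is strictly negative, so $\mathrm{sgn}(f_2)$ is the opposite of $\mathrm{sgn}\bigl(2\E\sqrt{\E+h+\mu} - \mu\sqrt{\E-\om/2}\bigr)$. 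Rewriting this last factor as a comparison between $\sqrt{\E+h+\mu}/\mu$ and $\sqrt{2\E-\om}/(2\sqrt{2}\,\E)$ yields exactly the dichotomy of (Ia) versus (Ib), and Remark \ref{deltastab} translates the signs of $\Delta^{(0)}, \Delta^{(1)}$ into the stated stability properties of $\bar z_0$ and $\bar z_{\pi/2}$. The threshold $\bar e$ is then chosen so that the $O(e^4)$ remainder cannot flip the dominant sign.

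Parts (II) and (III) amount to inspecting the leading behaviour of \eqref{delta01} as $\mu\to\infty$ or $h\to\infty$ with all other parameters fixed. Both $\Delta^{(0)}$ and $\Delta^{(1)}$ diverge to $+\infty$ (the dominant terms come from $B^{(i)}D^{(i)}$, which grow like $\mu^2$ or $h\mu$), so by continuity there is a threshold above which both are positive; Remark \ref{deltastab} gives the saddle conclusion. For (IV), I use the finer limits $\ell_0, \ell_1$ computed just before the proposition: since $\ell_0 > 0$ for all admissible $(b, h, \mu, \omega)$, the point $(0,0)$ is a saddle for $\E$ large; the sign of $\ell_1$ changes exactly at the value $\bar{\bar\mu} = (b^2-1)(2h + b^2\om)/(2b)$ read off from its factorization, producing (IVa) (center at $(\pi/2, 0)$) when $\mu > \bar{\bar\mu}$ and (IVb) (saddle) when $\mu < \bar{\bar\mu}$.

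The last assertion is the most delicate step. Assuming $F$ is globally defined on $\partial D$, the generating-function construction of Section \ref{ssec: Jac 1} makes $F$ area preserving on its natural cylindrical phase space, and the two reflection symmetries of the ellipse induce involutions commuting with $F$. In cases (II), (III), and (IVb) the four homotetic fixed points at $\xi = 0, \pi/2, \pi, 3\pi/2$ are all saddles, contributing total Poincar\'e index $-4$; a Lefschetz/Euler-characteristic balance on the (compactified) cylinder then forces additional fixed points of positive index, and these must be centers in the nondegenerate case. The $\mathbb{Z}/2 \times \mathbb{Z}/2$ symmetry distributes them one per open quadrant, so at least one stable fixed point with $\xi_0 \in (0, \pi/2)$ exists, and the four-fold symmetric orbit accounts for the four stable non-homotetic points claimed. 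The main obstacle, which is exactly why the statement isolates the hypothesis of global good-definition, is to set up the index argument rigorously: one must control the behaviour of $F$ near the locus where refraction becomes critical ($|\alpha| = \alpha_{I,\mathrm{crit}}$ of Remark \ref{rifl totale}) so that a well-defined compactification exists and the counted extra fixed points are genuine centers, not degenerate.
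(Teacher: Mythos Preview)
Your proposal is correct and follows essentially the same route as the paper: the proposition is a summary of the computations in \S\ref{ssec: ecc piccola}, namely the small-$e$ expansion \eqref{espansione ecc piccole}, the limits of $\Delta^{(0)},\Delta^{(1)}$ as $h,\mu\to\infty$, and the explicit limits $\ell_0,\ell_1$ as $\E\to\infty$, combined with Remark \ref{deltastab}. For the final existence claim the paper likewise appeals (without details, and with the same caveat on global good-definition) to a topological-degree/index balance for area-preserving maps; your sketch of the Poincar\'e--Lefschetz argument with the $\mathbb{Z}/2\times\mathbb{Z}/2$ symmetry is in fact more explicit than what the paper provides.
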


Proposition \ref{eccpiccola}(I) provides an approximated relation between $h$ and $\mu$ through which one can find two regimes in the parameters such that, for $e$ sufficiently small, the stability of the homotetic fixed points can be easily deduced. In particular, there is a curve which, for $e$ sufficiently small, divides the two cases (Ia) and (Ib). As all the involved quantities are positive, one has
\begin{equation*}
	\frac{\sqrt{\Eh+\mu}}{\mu}>\frac{\sqrt{2\E-\om}}{2\sqrt{2}\E} \Longleftrightarrow h>\frac{2\E-\om}{8\E^2}\mu^2-\mu-\E=p(\mu). 
\end{equation*}
When $\E$ and $\omega$ are fixed, as well as $e$ small enough, the 2-degree polynomial $p(\mu)$ describes then a parabola on the plane $(h,\mu)$ such that, for fixed $\mu$, if $h>>p(\mu)$, then we are in case (Ib); on the contrary, if $h<<p(\mu)$, the case (Ia) is verified. \\
We stress that this behavour holds only for $e$ small enough for $f_2$ and $g_2$ to be the dominant terms in the expansions of $\Delta^{(0)}$ and $\Delta^{(1)}$. Moreover, one can verify that $f_4,g_4=\mathcal{O}(h^2\sqrt{\mu})$, and that all the further terms of the Taylor expansion are of the order of positive powers of $\mu$ and $h$: as a consequence, for every $e>0$, eventually the two parameters would be too large to use the above approximation.\\

\begin{figure}
	\centering
	\includegraphics[width=0.8\linewidth]{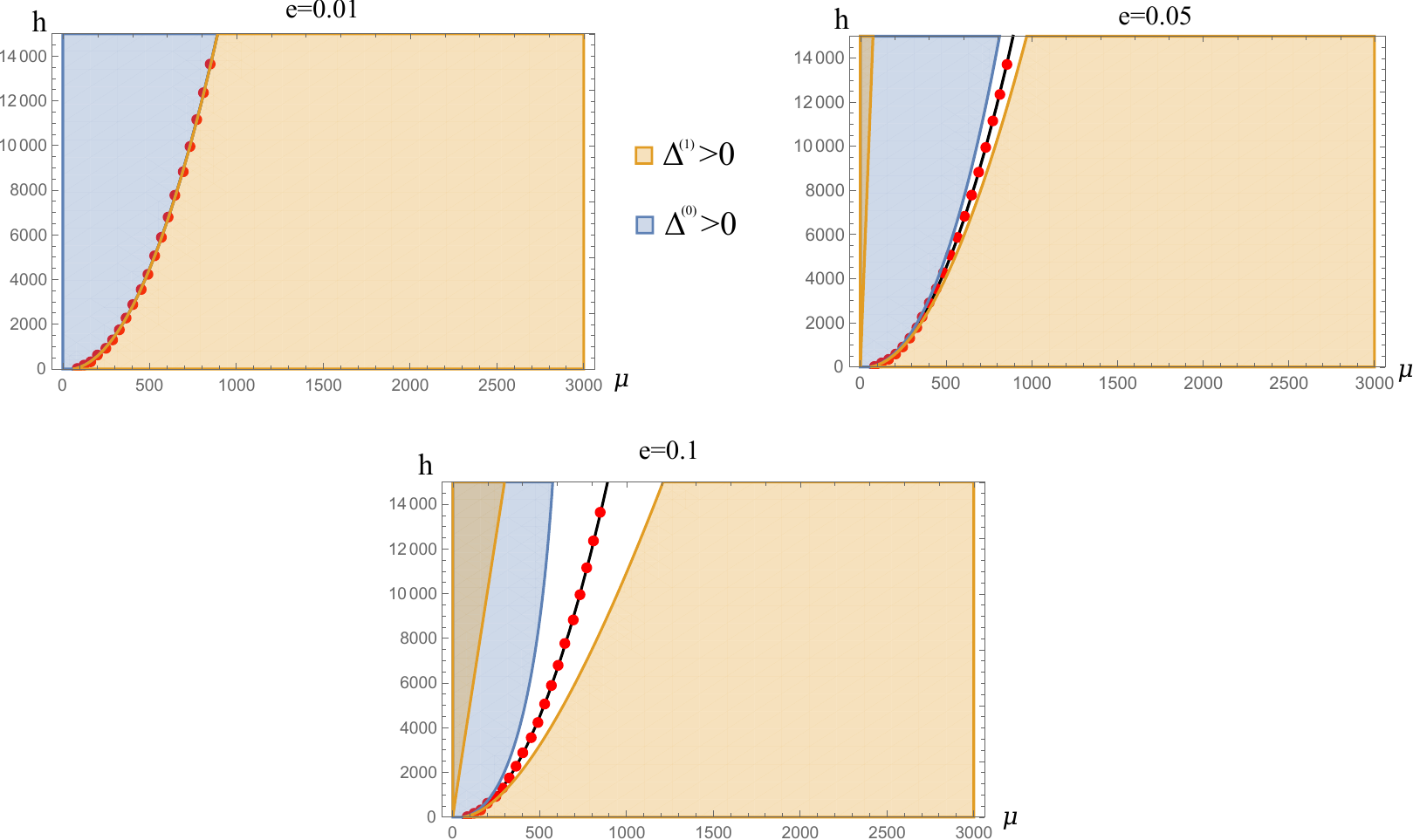}
	\caption{Sign of $\Delta^{(0)}$ and $\Delta^{(1)}$ in the $(\mu,h)-$plane for $\E=10$ and $\omega=2$. The red dotted curve represents the parabola $h=p(\mu)$. }
	\label{fig:parabole}
\end{figure}

Figure \ref{fig:parabole} gives a comparison between the parabola $p(\mu)$ (red dots) and the effective curves of change of sign for $\Delta^{(0)}$ and $\Delta^{(1)}$ in the $(\mu,h)$ plane for $\E=10$, $\omega=2$ and increasing eccentricities. As one can see, for very small eccentricities the approxmation fiven by $p(\mu)$ is very good even for extremely high values of $\mu$ and $h$; on the other hand, the increase of the eccentricity and of the two inner parameters made this approximation worse. \\
Moving to moderate and high eccentricities, the behaviour of the signs of $\Delta^{(0)}$ and $\Delta^{(1)}$ becomes more complex: to give an example of this, Figure  \ref{fig:h-mu-ecc-alte} shows the sign of both the discriinants as functions of $h$ and $\mu$ and for fixed $\E$, $\omega$ and eccentricity of the ellipse. It is present a reminiscence of the original parabola $p(\mu)$, which tends to widen for increasing eccentricity, while other sign-changing curves, deriving by the influence of the higher order terms in (\ref{espansione ecc piccole}), are present.

\begin{figure}
	\centering
	\includegraphics[width=0.7\linewidth]{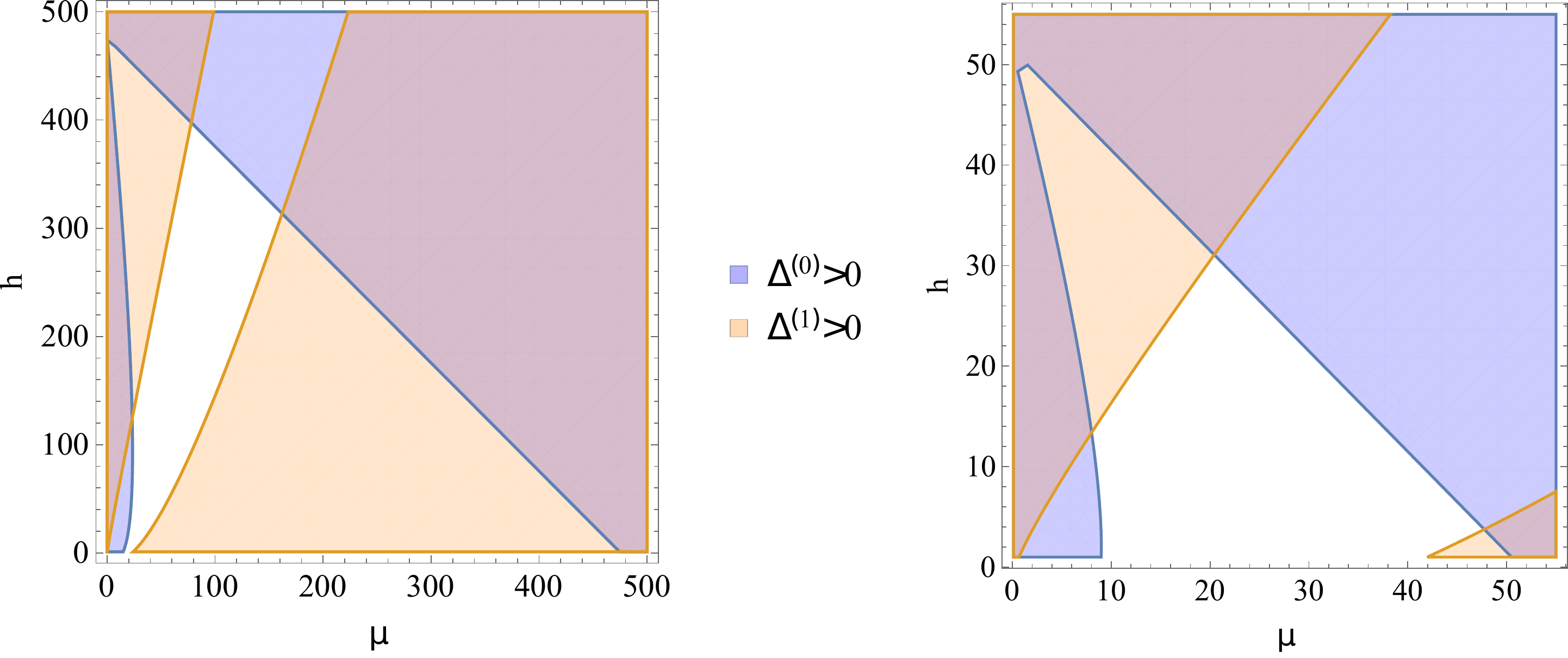}
	\caption{Sign of $\Delta^{(0)}$ and $\Delta^{(1)}$ in the $(\mu,h)-$plane for $\E=2.5$, $\omega=\sqrt{2}$ and $e=0.3$ (left) or $e=0.5$ (right).}
	\label{fig:h-mu-ecc-alte}
\end{figure}

\subsection{Arising of 2-periodic brake orbits}\label{ssec: brake}

As already seen in Section \ref{ssec: ecc piccola}, the existence of non homotetic $1-$ periodic points of $F$ can be deduced by the signs of $\Delta^{(0)}$ and $\Delta^{(1)}$, namely, by the stability properties of the homotetic points. On the other hand, other analytical techniques can be used to assure the existence of particular periodic orbits with period greater than $1$. It is the case of the so-called non homotetic\textit{brake orbits}, namely, 2-periodic orbits with homotetic outer arcs (see Figure \ref{fig:brake}), whose existence can be proved for suitable regimes of the parameters through an application of the shooting method. 
\begin{figure}[h!]
	\centering
	\includegraphics[width=0.5\linewidth]{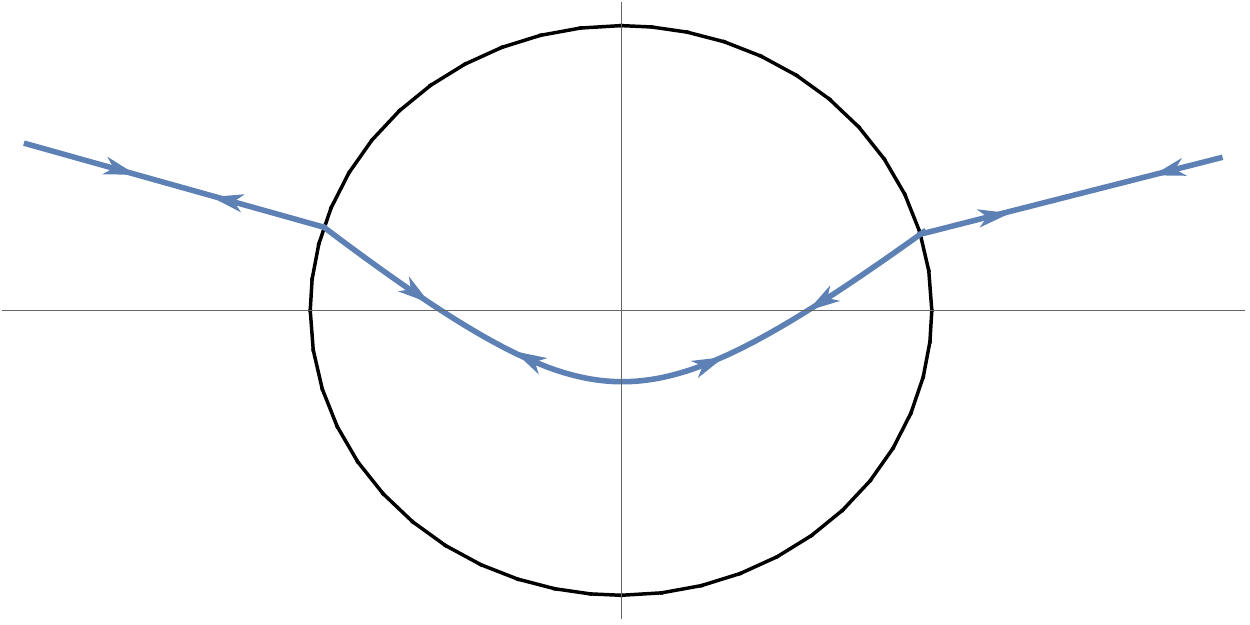}
	\caption{Example of 2-periodic brake orbit: the homotetic outer arcs are connected by an inner hyperbola. }
	\label{fig:brake}
\end{figure}
The existence of brake orbits is equivalent to the existence of non-homotetic zeros for the \textit{Free Fall} map, which quantifies the scattering with respect to the radial direction of the trajectory after entering the domain. Given $\theta\in[0,2\pi] $, consider the homotetic outer arc with initial points $(p_0,v_0)$, where $p_0$ is the intersection beween the ellipse and the radial straight line of inclination $\theta$, while $v_0$ is the outward-pointing radial vector in the direction of $\theta$ and such that $|v_0|^2/2-V_E(p_0)=0$: if we denote, as in the previous Sections, with $(p_1,v_1)$ the position and velocity vectors after two consecutive outer and inner crossings (with the respective refractions), the free fall map $\theta\mapsto\delta(\theta)$ returns the angle $\delta$ between $v_1$ and $p_1$ (see Figure \ref{fig:free-fall}).\\
\begin{figure}
	\centering
	\includegraphics[width=.5\linewidth]{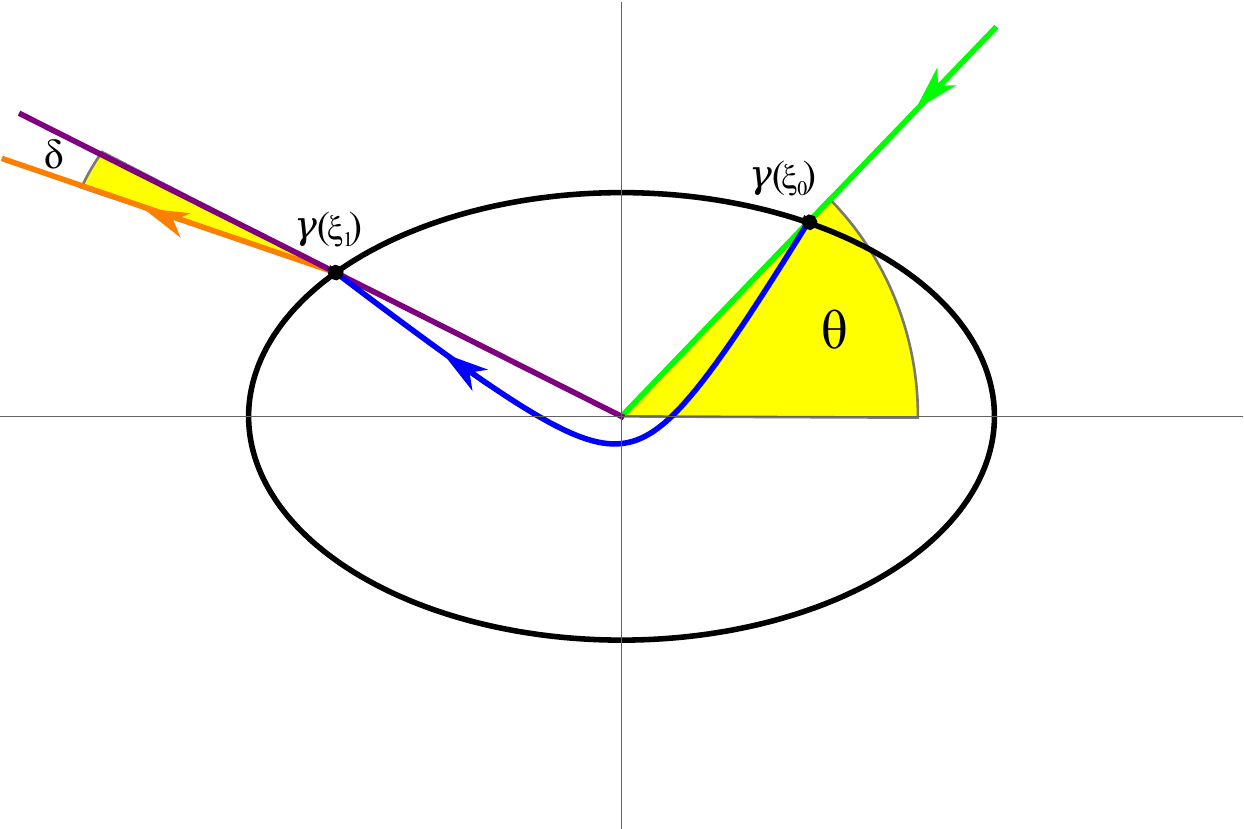}
	\caption{Free fall map on the ellipse.}
	\label{fig:free-fall}
\end{figure}
If we consider general domains whose boundary intersects orthogonally the axes, as in the case of the ellipse, Theorems \ref{thm esistenza ext} and \ref{thm esistenza int} guarantee that the Free Fall map is well defined in suitable neighborhoods of the homotetic orbits in the horizontal and vertical directions. Nevertheless, as the construction of $\delta(\theta)$ only involves the refraction rule and the inner dynamics, under suitable hypotheses on $\partial D$ one can assure that it is well defined globally on $[0,2\pi]$; in particular, it is sufficient to require: 
\begin{equation}\label{FF cond}
	\begin{aligned}
	&(I)\text{ the well definition of the inner dynamics globally on }\partial D; \\
	&(II)\text{ a global transversality property of }\partial D\text{ with respect to the radial directions, namely}\\
		&\qquad\qquad\qquad\qquad\qquad\qquad\qquad\qquad\forall\xi\in I\quad \gamma(\xi)\nparallel\dot{\gamma(\xi)}. 
	\end{aligned}
\end{equation}
When $\partial D$ satisfies the above properties, one can continously extend  $\delta(\theta)$ even in the case that the first return map $F$ is not well defined (see Remark \ref{rifl totale}): it suffices to impose $\delta(\theta)=\pi/2$ whenever the inner angle $\beta_1$ is greater or equal to $\beta_{crit}=\arcsin(\sqrt{V_E(p_1)/V_I(p_1)})$ and $\delta(\theta)=-\pi/2$ when $\beta_1\leq-\beta_{crit}$. As a consequence, the function $\delta$ results to be a \textcolor{black}{continous} function of $\theta\in[0,2\pi]$, \textcolor{black}{differentiable whenever $|\beta_1|<\beta_{crit}$, as in neighborhoods of homotetic solutions}. Moreover, condition (II) assures that whenever $\delta(\theta)=0$ the Free Fall map is well-defined, since the refracted outer arc is not tangent to $\partial D$. \\
While the geometrical implications of condition (II) are rather immediate, condition (I) is implied by takig a particular class of domains characterized by a convexity property with respect to the hyperbol\ae. In particular, we shall give the following definition. 
\begin{defn}
	We say that the domain $D$ is \textbf{convex for hyperbol\ae~ for fixed } $\boldsymbol{h},\boldsymbol{\E}$\textbf{ and} $\boldsymbol{\mu}$ if every Keplerian hyperbola with energy $\Eh$ and central mass $\mu$ intersects $\partial D$ at most in two points. \\
	The domain $D$ is \textbf{convex for hyperbol\ae} if the previous condition holds for every positive $\E, h$ and $\mu$. 
\end{defn}
The connection between the Free Fall map and the brake orbits is straightforward: $(\cos\bar{\theta}, \sin\bar{\theta})$ is the direction of a 2-periodic brake orbit if and only if $\delta(\bar{\theta})=0$ and, denoting with $\xi_ {\bar{\theta}}$ the parameter in $I$ such that $\gamma(\xi_ {\bar{\theta}})$ has polar angle $\bar\theta$, $\gamma(\xi_ {\bar{\theta}})\not\perp\dot{\gamma}(\xi_ {\bar{\theta}})$. \\ 
	Let us remark that, by the properties of the ellipse, one has that $\delta(k\pi/2)=0$ for $k=0,1,2,3$, and that condition (II) is trivially true. The following Proposition shows that, when the eccentricity is small enough the elliptical domains are also convex by hyperbol\ae, leading to the conclusion that, in these cases, the Free Fall map is globally well denfined.  
\begin{prop}
	If $D$ is an ellipse parametrised by $(\cos{(\xi)}, b\sin(\xi))$ with eccentricity $e\in[0,1/\sqrt{2})$, then it is convex by hyperbol\ae.
\end{prop}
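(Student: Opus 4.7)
The plan is to reduce the intersection condition to a low-degree algebraic equation and then exploit the bound $e<1/\sqrt{2}$ to count real roots on the Keplerian branch. I would parametrize the ellipse as $\gamma(\xi)=(\cos\xi,b\sin\xi)$ with $b=\sqrt{1-e^2}$, and write a generic Keplerian hyperbola with focus at the origin in focal form $|z|+e_h(z\cdot\hat u)=p$, where $\hat u=(\cos\theta_0,\sin\theta_0)$, $e_h>1$ and $p>0$. A point $\gamma(\xi)$ lies on the physical branch if and only if
$$\sqrt{1-e^2\sin^2\xi}=p-e_h(\cos\theta_0\cos\xi+b\sin\theta_0\sin\xi),$$
with the right-hand side nonnegative. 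The substitution $t=\tan(\xi/2)$ and clearing $(1+t^2)^2$ converts this, after squaring, into a quartic equation $P(t)=0$, where
$$P(t)=M(t)^2-\bigl((1+t^2)^2-4e^2t^2\bigr)$$
and $M(t)=(p-e_h\cos\theta_0)-2e_hb\sin\theta_0\,t+(p+e_h\cos\theta_0)t^2$. The Kepler branch is singled out by the extra sign constraint $M(t)\ge 0$, which is needed because squaring also brings in the far branch of the full hyperbolic conic.

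The next step is a useful factorization. Since $e<1$, one has $(1+t^2)^2-4e^2t^2=(1-2et+t^2)(1+2et+t^2)$, with both factors strictly positive. Using also $M^2-(1+t^2)^2=R_-(t)R_+(t)$ with $R_\pm(t):=M(t)\pm(1+t^2)$, the intersection equation rewrites as $R_-(t)R_+(t)=-4e^2t^2$. At any Kepler-branch intersection $R_+(t)>0$ (since $M\ge 0$ implies $R_+\ge 1+t^2>0$) and $R_-(t)\le 0$ (since $M^2\le(1+t^2)^2$). Hence all Kepler-branch intersections correspond to real roots of the quartic $R_-(t)R_+(t)+4e^2t^2=0$ at which $R_-$ and $R_+$ have opposite signs.

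The core of the argument, and the main obstacle, is to pass from the naive B\'ezout bound of four (intersections of two conics) to two under the hypothesis $e<1/\sqrt{2}$. The key observation is that the coefficient of $t^2$ in $(1+t^2)^2-4e^2t^2$ is $2(1-2e^2)$, strictly positive exactly when $e<1/\sqrt{2}$. I expect this positivity to propagate through the real-root count for the quartic $P(t)$. Concretely, I would carry out a discriminant analysis on the two quadratics $R_\pm(t)$, whose discriminants are $4\bigl(e_h^2(1-e^2\sin^2\theta_0)-(p\pm1)^2\bigr)$, and perform a case analysis depending on whether each factor changes sign on $\mathbb{R}$. Combining the opposite-sign requirement from the previous paragraph with the constraint $M(t)\ge 0$ and the positivity of $2(1-2e^2)$ should in each case leave at most two admissible real roots. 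An alternative and more geometric route is a continuity argument starting from the circular case $e=0$, where the intersection reduces to a line cutting the unit circle (at most two points), and showing that the first possible new tangency, i.e.\ the first eccentricity at which a pair of additional real roots could bifurcate from complex ones, occurs exactly at $e=1/\sqrt{2}$, the critical value at which the coefficient $2(1-2e^2)$ vanishes.
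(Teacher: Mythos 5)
Your reduction is sound as far as it goes: the focal--form equation, the substitution $t=\tan(\xi/2)$, the identification of the physical branch via the sign condition $M(t)\ge 0$, and the observation that admissible roots satisfy $R_-(t)\le 0\le R_+(t)$ are all correct (modulo the point $\xi=\pi$ excluded by the substitution and a possible degree drop of the quartic when $p+e_h\cos\theta_0=\pm1$, both fixable). But the proof stops exactly where the proposition begins. The whole content of the statement is that $P(t)=0$ has at most two roots in $\{M\ge0\}$ when $1-2e^2>0$, and for this you offer only the expectation that the positivity of the $t^2$--coefficient of $(1+t^2)^2-4e^2t^2$ ``should propagate through the real-root count'' via an unspecified case analysis on the discriminants of $R_\pm$. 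That is not a proof, and it is not evident it can be turned into one along the lines sketched: knowing whether each of $R_\pm$ changes sign localizes the admissible roots to $\{R_-\le0\}$, but by itself does not prevent the quartic $R_-R_++4e^2t^2$ from having three or four roots there. (It is true that $N(t)=\sqrt{(1+t^2)^2-4e^2t^2}$ is strictly convex precisely when $e<1/\sqrt2$ --- one computes $2QQ''-(Q')^2=8\bigl(t^6+3(1-2e^2)t^4+3t^2+(1-2e^2)\bigr)$ with $Q=N^2$ --- but convexity of $N$ alone does not bound its intersections with a parabola by two; any completion must also use the constraints $e_h>1$, $p>0$ hidden in the coefficients of $M$.) Your alternative continuity route has the same status, and the claim that the first tangency bifurcation occurs \emph{exactly} at $e=1/\sqrt2$ is unsubstantiated: $1/\sqrt{2}$ is a threshold valid uniformly over the whole family of hyperbol\ae, not the precise bifurcation value for any given one.

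For comparison, the paper closes the count by a short geometric argument that avoids root-counting altogether: ordering the hyperbol\ae\ with a fixed transverse axis by angular momentum gives a nested family, so a four-point intersection forces some member to be internally tangent to $\partial D$ at a point $p_T$, where necessarily $k_{hyp}(p_T)\ge k_{ell}(p_T)$; the ellipse's curvature is bounded below by $b$, while the Kepler hyperbola's curvature at distance $r_T\ge b$ from the focus is at most $\mu/\bigl(2r_T(r_T(\E+h)+\mu)\bigr)\le 1/(2b)$, and $1/(2b)<b$ exactly when $2b^2>1$, i.e.\ $e<1/\sqrt2$. If you wish to pursue the algebraic route, this curvature inequality is essentially the fact your discriminant analysis would have to rediscover, and the cleanest repair is probably to import it directly rather than to fight the quartic.
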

\begin{proof}
	
	Let us start by fixing the ellipse's eccentricity $e$ and the parameters $\E, h$ and $\mu$ and by taking the associated family $\mathcal{F}$ of hyperbol\ae, which is continous with respect to variations of the angular momentum and rotations of the axes. Denoting with $\ell$ the absolute value of the angular momentum of a Keplerian hyperbola $\mathcal{K}$ in such a family and with $r_p$ its minimal distance from the origin, one has that (see e.g. \cite{celletti2010stability})
	\begin{equation*}
		r_p=\frac{\ell^2}{\mu\left(1+\sqrt{1+\frac{2(\Eh)\ell^2}{\mu^2}}\right)}, 
	\end{equation*} 
	which is continuous and strictly increasing for $\ell\geq0$. The distance at the pericenter is then $0$ when $\ell=0$ (homotetic orbit) and varies continuosly with $\ell$. Moreover, since for the ellipse Theorem \ref{thm esistenza int} is true for every $\xi\in[0,2\pi]$, for $\ell$ small enough the hyperbol\ae~  of $\mathcal{F}$ intersect $\partial D$ exactly twice.
	Let us now fix a direction in $\R^2$, and consider only the hyperbol\ae~ in $\mathcal{F}$ whose  transverse axis is in the chosen direction, denoting them with $\mathcal F'$. 
	\begin{figure}
		\centering
		\includegraphics[width=0.5\linewidth]{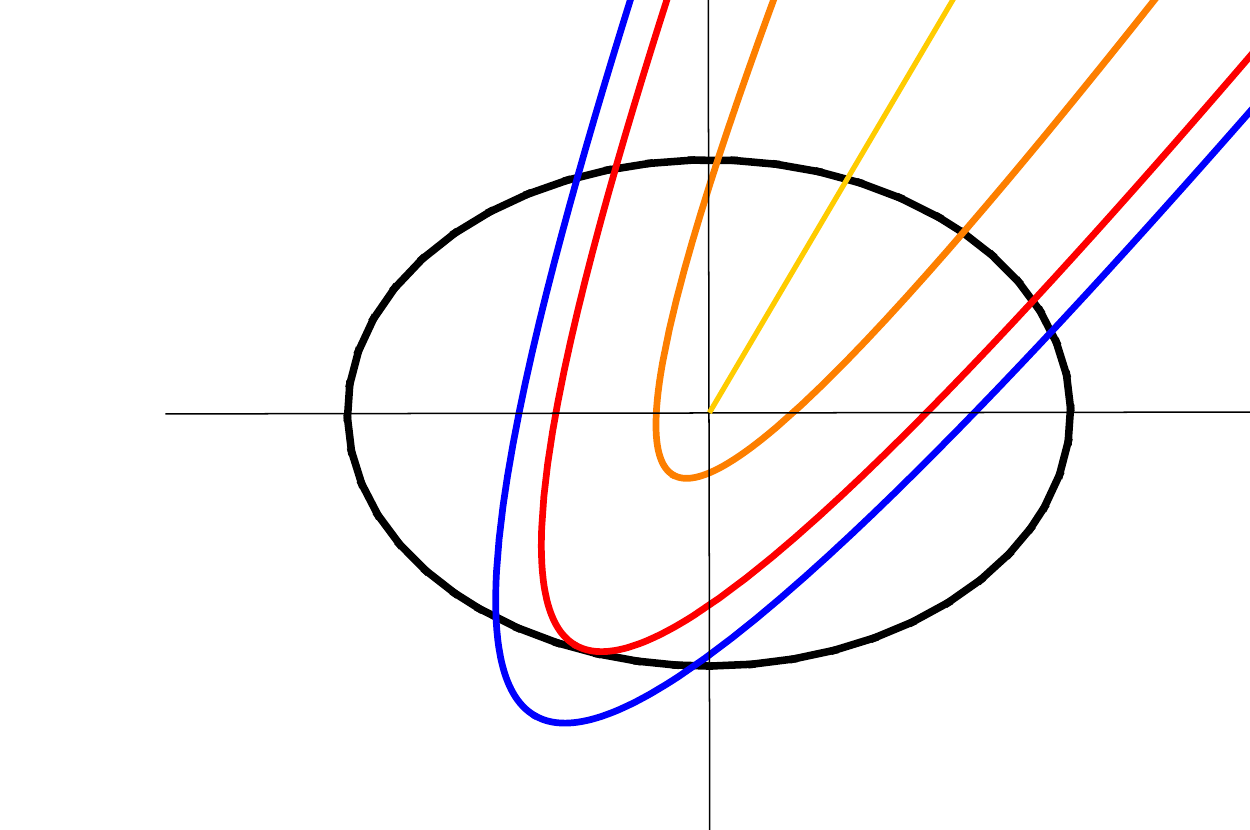}
		\caption{Nested family of Keplerian hyperbol\ae~ $\mathcal F'$ for fixed $\E, h, \mu$ and fixed tranverse axis. As $\ell$ increases, the hyperbol\ae~ move from the inner ones (orange) to the outer ones (blue). }
		\label{fig:iperboli-tg}
	\end{figure}	
		As the eccentricity of a Keplerian hyperbol\ae, whose expression is 
	\begin{equation*}
		e_{hyp}=\sqrt{1+\frac{2(\Eh)\ell^2}{\mu^2}},
	\end{equation*}
	is strictly increasing in $\ell$, such hyperbol\ae ~ are nested as in Figure \ref{fig:iperboli-tg}. Let us suppose that there exists a Keplerian hyperbola in $\mathcal F'$ which intersects $\partial D$ in four points $p_A, p_B, p_C$ and $p_D$. For the previous considerations on the continous dependence and monotonicity of $r_p$ and $e_{hyp}$ on $\ell$, there exists $\bar \ell$ such that the corresponding hyperbola $\bar{\mathcal K}$ in $\mathcal F'$ is tangent from inside to $\partial D$ in a point $p_T$; define $r_T=|p_T|$.  This implies that, denoted with $k_{hyp}(p)$ and $k_{ell}(p)$  respectively the curvatures with respect to the inward-pointing normal vector of $\bar{\mathcal{K}}$ and of $\partial D$ in a point $p$, we obtain
	\begin{equation*}\label{cond 4 int}
		k_{hyp}(p_T)\geq k_{ell}(p_T), 
	\end{equation*}
	which is a necessary condition for the family $\mathcal F'$ to admit an hyperbola which intersects $\partial D$ four times. 
	The ellipse's curvature is always bounded from below by $b$, while one can compute $k_{hyp}(p_T)$ by parametrising $\bar{\mathcal K}(s)$ by the cinetic time and recalling that, for a generic curve  $r(t)$, the curvature is given by 
	\begin{equation*}
		k_r(t)=\frac{\vert\vert r'(t)\wedge r''(t)\vert\vert}{\vert\vert r'(t)\vert\vert^3}.  
	\end{equation*}
	Observing that $p_T\in\partial D$, which implies $r_T\geq b$, one obtains that, if $\bar s$ is such that $\bar{\mathcal{K}}(\bar s)=p_T$, 
		\begin{equation*}
			k_{hyp}(p_T)=\frac{||\bar{\mathcal K}'(\bar s)\wedge\bar{\mathcal K}''(\bar s)||}{||\bar{\mathcal K}'(\bar s)||^3}\leq\frac{||\bar{\mathcal K}''(\bar s)||}{||\bar{\mathcal K}'(\bar s)||^2}=\frac{\mu}{2r_T(r_T(\Eh)+\mu)}\leq\frac{\mu}{2b(b(\Eh)+\mu)}
		\end{equation*}
	Taking together the bounds obtained for $k_{ell}(p_T)$ and $k_{hyp}(p_T)$, one can find a necessary condition for the family $\mathcal{F}'$ to admit hyerbol\ae~ with four intersection points with $\partial D$, given by
	\begin{equation*}
		\frac{\mu}{2b(b(\Eh)+\mu)}\geq b. 
	\end{equation*}
It is then sufficient to require
	\begin{equation}\label{cond conv hyp}
	\frac{\mu}{2b(b(\Eh)+\mu)}< b \Longleftrightarrow   2b^2\left(\frac{\Eh}{\mu}b+1\right)>1
	\end{equation}
	to ensure that $\mathcal F'$ does not admit hyperbol\ae~ of such kind. It is straightforward to observe that (\ref{cond conv hyp}) is trivially satisfied for every $\Eh>0$ ad $\mu>0$ whenever $2b^2>1$, namely $e\in[0,1/\sqrt{2})$. This reasoning can be repeated for every fixed direction for the axis and for every $\E, h, \mu>0$; in particular, it holds also when two of the four points of the original hyperbola (the blue one in Figure \ref{fig:iperboli-tg}) coincide: it is in fact trivially true when $p_C=p_D$, and, if $p_A=p_D$ or $p_B=p_C$, one can take a lower $\ell$ to retrieve the original case. Then the convexity for hyperbol\ae~ is proved whenever $e\in[0,1/\sqrt{2})$.

\end{proof}

Although deriving the explicit expression of $\delta(\theta)$ goes beyond the extent of this study, the values of its derivatives computed at the homotetic points, which can be found by making use of Eqs.(\ref{ellisse0}, \ref{ellisse90}), along with the global good definition of the Free Fall map, provide a sufficient condition for the existence of the brake orbits in the elliptic case.
\begin{thm}\label{teorema brake}
	For every $\E>0$, $\omega>0$ such that $\om>2\E$, $e\in(0,1/\sqrt{2})$ there are $\bar{h}>0$, $\bar{\mu}>0$ such that, if $h>\bar{h}$ and $\mu>\bar{\mu}$, the first return map $F$ admits at least four $2-$periodic brake orbits. 
\end{thm}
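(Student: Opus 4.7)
\medskip
\noindent\textbf{Proof plan for Theorem \ref{teorema brake}.}  The plan is to work entirely with the \emph{Free Fall map} $\delta:[0,2\pi]\to[-\pi/2,\pi/2]$ introduced in \S\ref{ssec: brake}, since by its very definition its non\nobreakdash-homotetic zeros are exactly the polar directions of the starting points of $2$--periodic brake orbits. The hypothesis $e\in(0,1/\sqrt{2})$ lets us invoke the preceding proposition: the ellipse is convex for hyperbol\ae, hence condition $(I)$ of (\ref{FF cond}) holds, and the transversality condition $(II)$ is trivial for an ellipse whose axes are coordinate axes. Consequently $\delta$ is well defined and continuous on the whole of $[0,2\pi]$ and, as noted in \S\ref{ssec: brake}, differentiable in a neighbourhood of each homotetic direction $\theta=k\pi/2$, $k=0,1,2,3$, where it vanishes.

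The first step is to exploit the $\mathbb{Z}_2\times\mathbb{Z}_2$ symmetry of the ellipse (reflections across the coordinate axes) to show the equivariance $\delta(\pi-\theta)=-\delta(\theta)$ and $\delta(-\theta)=-\delta(\theta)$. This implies that every non\nobreakdash-homotetic zero $\bar\theta\in(0,\pi/2)$ automatically produces three further distinct zeros $\pi-\bar\theta,\pi+\bar\theta,2\pi-\bar\theta$, so it is enough to exhibit one zero of $\delta$ in the open interval $(0,\pi/2)$.

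The heart of the proof is the computation of $\delta'(0)$ and $\delta'(\pi/2)$. Since the outer arc starts and ends radially at the same point, the Free Fall map is determined only by the refraction at the entry point, one inner Keplerian arc, and the refraction at the exit point. One can therefore set up an implicit system analogous to (\ref{3 eq}) but involving only the inner generating function $S_I$, the radial\nobreakdash-to\nobreakdash-normal angle function $\nu(\xi)$ of the ellipse, and Snell's law at the two endpoints. Linearising this system at $\xi=\bar\xi_k$, $k=0,1$, and plugging in the explicit second derivatives of $S_I$ and of $S_E$ recorded in (\ref{ellisse0}) and (\ref{ellisse90}) yields closed\nobreakdash-form expressions of $\delta'(k\pi/2)$ as rational functions of $\mathcal{E},h,\mu,\omega$ and $e$. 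Having these formulas, I would carry out the same asymptotic analysis as in Proposition \ref{eccpiccola}(II)--(III): as $\mu\to\infty$ (resp.\ $h\to\infty$) with the other parameters fixed, both $\delta'(0)$ and $\delta'(\pi/2)$ are dominated by a common leading term whose sign is unambiguously determined (it is the same sign that forces $\Delta^{(0)}$ and $\Delta^{(1)}$ to become positive, i.e.\ the homotetic orbits to turn into saddles). Hence one can choose thresholds $\bar h=\bar h(\mathcal E,\omega,e),\bar\mu=\bar\mu(\mathcal E,\omega,e)$ so that, for $h>\bar h$ and $\mu>\bar\mu$, the two derivatives $\delta'(0)$ and $\delta'(\pi/2)$ have the same sign.

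Given matching signs of $\delta'(0)$ and $\delta'(\pi/2)$, the intermediate value theorem applied to the continuous function $\delta$ on $[0,\pi/2]$ produces a zero $\bar\theta\in(0,\pi/2)$; non\nobreakdash-homoteticity of the corresponding orbit is automatic because $\gamma(\xi_{\bar\theta})\not\perp\dot\gamma(\xi_{\bar\theta})$ for $\bar\theta\neq k\pi/2$, so the fixed\nobreakdash-point condition (\ref{fixedcond}) fails. Combining with the symmetry of Step 1 yields the four non\nobreakdash-homotetic brake orbits claimed. The main obstacle is the explicit computation of $\delta'(k\pi/2)$: unlike the full return map $F$, whose Jacobian was already tabulated in (\ref{jac gen}), the Free Fall map encodes a \emph{half} return (outer radial oscillation plus one inner arc plus refractions), and so its linearisation at the homotetic directions requires a separate implicit\nobreakdash-function argument tailored to the radial boundary condition, together with careful bookkeeping of the angles $\nu(\xi)$ between the radial and normal directions on the ellipse. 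The ensuing asymptotic analysis as $h,\mu\to\infty$, though parallel to that of $\Delta^{(0)},\Delta^{(1)}$, must be carried out independently because the signs of $\delta'(k\pi/2)$ and of $\Delta^{(k)}$ need not coincide at finite values of the parameters, even if they are controlled by the same dominant term in the limit.
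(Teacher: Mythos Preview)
Your plan is correct and follows essentially the same route as the paper: reduce by symmetry to finding a zero of the Free Fall map $\delta$ in $(0,\pi/2)$, compute $\delta'(0)$ and $\delta'(\pi/2)$ via an implicit-function argument built from $S_I$ and the geometry of the ellipse, and show both derivatives have the same sign (in fact both positive) for large $h$ and $\mu$, then apply the intermediate value theorem. The only notable difference is that the paper carries the computation of $\delta'(0)$ and $\delta'(\pi/2)$ through to explicit closed-form expressions and extracts explicit thresholds $\bar\mu(\mathcal E,\omega,e)$ and $\bar h(\mathcal E,\omega,e,\mu)$ rather than arguing asymptotically; also note that only $S_I$ enters the linearisation (not $S_E$, since the outer leg is radial), so your reference to the $S_E$ derivatives in (\ref{ellisse0})--(\ref{ellisse90}) is superfluous.
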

\begin{proof}
	By symmetry, it is sufficient to prove that, for $\E, \omega, b$ satisfying the hypotheses of the Theorem, there are $\bar{h}>0$ and $\bar{\mu}>0$ such that, if $h>\bar{h}$ and $\mu>\bar{\mu}$, then $\exists\bar{\theta}\in(0,\pi/2)$ such that $\delta(\bar{\theta})=0$. To this end, we want to find a regime for the parameters such that $\delta'(0)>0$ and $\delta'(\pi/2)>0$. \\
	Recall the definitions of $(\x0,\alpha_0)$, $(\x1,\alpha_1)$ used in Section \ref{sec:first_return}, suppose to work in a neighborhood of $\theta=\pi/2$, and consider the $6-$dimensional variable 
	\begin{equation*}
		\boldsymbol{q}=(\theta,\x0,\alpha_0,\x1,\alpha_1,\delta)\in(0,\pi)\times(0,\pi)\times\left[-\frac{\pi}{2},\frac{\pi}{2}\right]\times(0,\pi)\times\left[-\frac{\pi}{2},\frac{\pi}{2}\right]\times[0,2\pi]
	\end{equation*}  
From elementary geometric considerations and recalling the refraction relations, one has that, defined $\bar{\boldsymbol{q}}=(\pi/2,\pi/2,0,\pi/2,0,0)$, $\Phi(\bar{\boldsymbol{q}})=0$, where

\begin{equation*}
	\Phi(q)=
	\begin{pmatrix}
		\frac{1}{b}\cot\x0-\cot\theta\\
		\alpha_0+\theta-\arccot(b\cot\x0)\\
		\partial_aS_I(\x0,\x1)+\sqrt{V_E(\gamma(\x0))}\sin\alpha_0\\
		\partial_bS_I(\x0,\x1)-\sqrt{V_E(\gamma(\x1))}\sin\alpha_1\\
		\delta+\alpha_1+\arccot(b\cot\x1)-\arccot\left(\frac{\cot\x1}{b}\right), 
	\end{pmatrix}
\end{equation*}
\textcolor{black}{is defined in a suitable neighborhood of $\bar{\boldsymbol{q}}$}. As a consequence, 
\begin{equation*}
	\begin{aligned}
	M=D_{(\x0,\alpha_0,\x1,\alpha_1,\delta)}\Phi(\bar{\boldsymbol{q}})&=
	\begin{pmatrix}
		-\frac{1}{b}&0&0&0&0&\\
		-b&1&0&0&0&\\
		\partial^2_aS_I(\bx_1,\bx_1)&\sqrt{\rule{0pt}{2ex}V_E(\gamma(\bx_1))}&\partial_{ab}S_I(\bx_1,\bx_1)&0&0\\
		\partial_{ab}S_I(\bx_1,\bx_1)&0&\partial^2_{b}S_I(\bx_1,\bx_1)&-\sqrt{\rule{0pt}{2ex}V_E(\gamma(\bx_1))}&0\\
		0&0&b-\frac{1}{b}&1&1
	\end{pmatrix}\\
&\Rightarrow\det(M)=\frac{\mu}{4b^3\sqrt{\Eh+\mu/b}}\sqrt{\E-\frac{\om}{2}b^2}>0. 
\end{aligned}
\end{equation*}
Applying then the implicit function theorem, $\delta'(\pi/2)$ can be computed as the last component of the vector
\begin{equation*}
	-M^{-1}D_\theta\Phi(\bar{\boldsymbol{q}})=-M^{-1}
	\begin{pmatrix}
		1\\1\\0\\0\\0
	\end{pmatrix},
\end{equation*}
obtaining
\begin{equation*}
\delta'(\pi/2)=-\frac{\left(\sqrt{\rule{0pt}{2ex}V_E(\gamma(\bx_1))}b^2+\epsilon_I^{(1)}b-\sqrt{\rule{0pt}{2ex}V_E(\gamma(\bx_1))}\right)\left(\sqrt{\rule{0pt}{2ex}V_E(\gamma(\bx_1))}b^2+(2I_0^{(1)}+\epsilon_I^{(1)})b-\sqrt{\rule{0pt}{2ex}V_E(\gamma(\bx_1))}\right)}{bI_0^{(1)}\sqrt{\rule{0pt}{2ex}V_E(\gamma(\bx_1))}}. 
\end{equation*}
With the same reasoning and taking $\theta\in(-\pi/2,\pi/2)$, one gets
\begin{equation*}
	\delta'(0)=\frac{\left(\sqrt{\rule{0pt}{2ex}V_E(\gamma(\bx_0))}+\epsilon_I^{(0)}b-\sqrt{\rule{0pt}{2ex}V_E(\gamma(\bx_0))}b^2\right)\left(\sqrt{\rule{0pt}{2ex}V_E(\gamma(\bx_0))}b^2-(2I_0^{(0)}+\epsilon_I^{(0)})b-\sqrt{\rule{0pt}{2ex}V_E(\gamma(\bx_0))}\right)}{bI_0^{(0)}\sqrt{\rule{0pt}{2ex}V_E(\gamma(\bx_0))}}. 
\end{equation*}
Taking $b=\sqrt{1-e^2}$, direct computations show that, if 
\begin{equation*}
	\mu>\bar{\mu}=\frac{e^2(1-e^2)^{3/2}(2\E-e^2\om)}{(2e^2-1)^2}
\end{equation*}
and
\begin{equation*}
	\begin{aligned}
	h>\bar{h}=&\frac{1}{4}\left(-2\E-\frac{(4e^2-2)\mu}{e^2\sqrt{1-e^2}}-(1-e^2)\om\right)+\\
	&+\sqrt{\frac{(2\E-(1-e^2)\om)(4\mu-e^2\sqrt{1-e^2}((1-e^2)\om-2\E))}{e^2\sqrt{1-e^2}}}
	\end{aligned}
\end{equation*}
then $\delta'(0)>0$ and $\delta'(\pi/2)>0$, and the statement is proved.
\end{proof}
\begin{rem}
Notice that Theorem \ref{teorema brake} can be extended to general domains $D$ with boundary $\partial D=supp(\tilde{\gamma})$, provided that: 
	\begin{itemize}
		\item conditions (\ref{FF cond}) hold; 
		\item $\gamma$ shares the symmetry properties of the ellipse; 
		\item in the vicinity of the intersections between the coordinate axes and $\partial D$, $\tilde{\gamma}$ and  $\gamma(\xi)=(\cos{\xi},b\sin{\xi})$ are equal up to the second order, namely: 
		\begin{equation*}
			\begin{split}
				&\tilde{\gamma}(\xi)=(\cos{\xi}+f(\xi),b\sin{\xi}+g(\xi)), \\
				f(\bx_{0\backslash1})=g(\bx_{0\backslash1})&=f'(\bx_{0\backslash1})=g'(\bx_{0\backslash1})=f''(\bx_{0\backslash1})=g''(\bx_{0\backslash1})=0. 
			\end{split}
		\end{equation*}  
	\end{itemize}  
As a matter of fact, one has that, locally around $\pi/2$ (the reasoning for $0$ is the same), the vector $\boldsymbol{q}$ defined as in the Theorem satisfies the relation $\tilde{\Phi}(\boldsymbol{q})$, with
\begin{align}
	\tilde{\Phi}(\boldsymbol{q})=
\begin{pmatrix}
	\frac{\cos{\x0}+f(\x0)}{b\sin{\x0}+g(\x0)}-\cot{\theta}\\
	\alpha_0+\theta-\arccot\left(\frac{b\cos{\x0}+g'(\x0)}{\sin{\x0}-f'(\x0)}\right)\\
	\partial_aS_I(\x0,\x1)+\sqrt{V_E(\gamma(\x0))}\sin\alpha_0\\
	\partial_bS_I(\x0,\x1)-\sqrt{V_E(\gamma(\x1))}\sin\alpha_1\\
	\delta+\alpha_1+\arccot\left(\frac{b\cos{\x1}+g'(\x1)}{\sin{\x1}-f'(\x1)}\right)-\arccot\left(\frac{\cos{\x1}+f(\x1)}{b\sin{\x1}+g(\x1)}\right), 
\end{pmatrix}
\end{align}
whose derivatives with respect to all the variables, computed in $\bar{\boldsymbol{q}}$, are the same as in the Theorem. \\

\end{rem}

\begin{esempio}
	To make the reasoning quantitative, let us now consider the case $\E=2.5, \omega=\sqrt{2}, \mu=2$ and $e=0.1$. 
	\begin{figure}
		\centering
		\includegraphics[width=.5\textwidth]{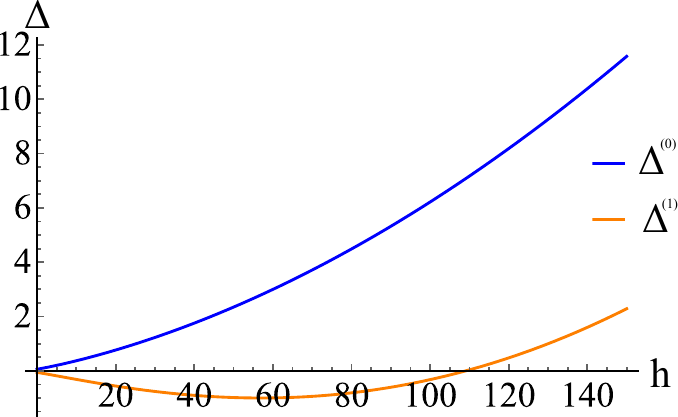}
		\caption{Values of $\Delta^{(0)}$ and $\Delta^{(1)}$ as a function of $h$, with $\E=2.5, \omega=\sqrt{2}, \mu=2, e=0.1$. }
		\label{fig:deltah}
	\end{figure}
	Figure \ref{fig:deltah} shows the signs of $\Delta^{(0)}$ and $\Delta^{(1)}$ as a function of $h$. One can see that, while $(0,0)$ is always an unstable saddle, there is a bifurcation value of $h$ for which $(\pi/2,0)$ changes its stability, whose value is precisely $h_{bif}=109.091$. 
	\begin{figure}
		\centering
		\includegraphics[width=.8\linewidth]{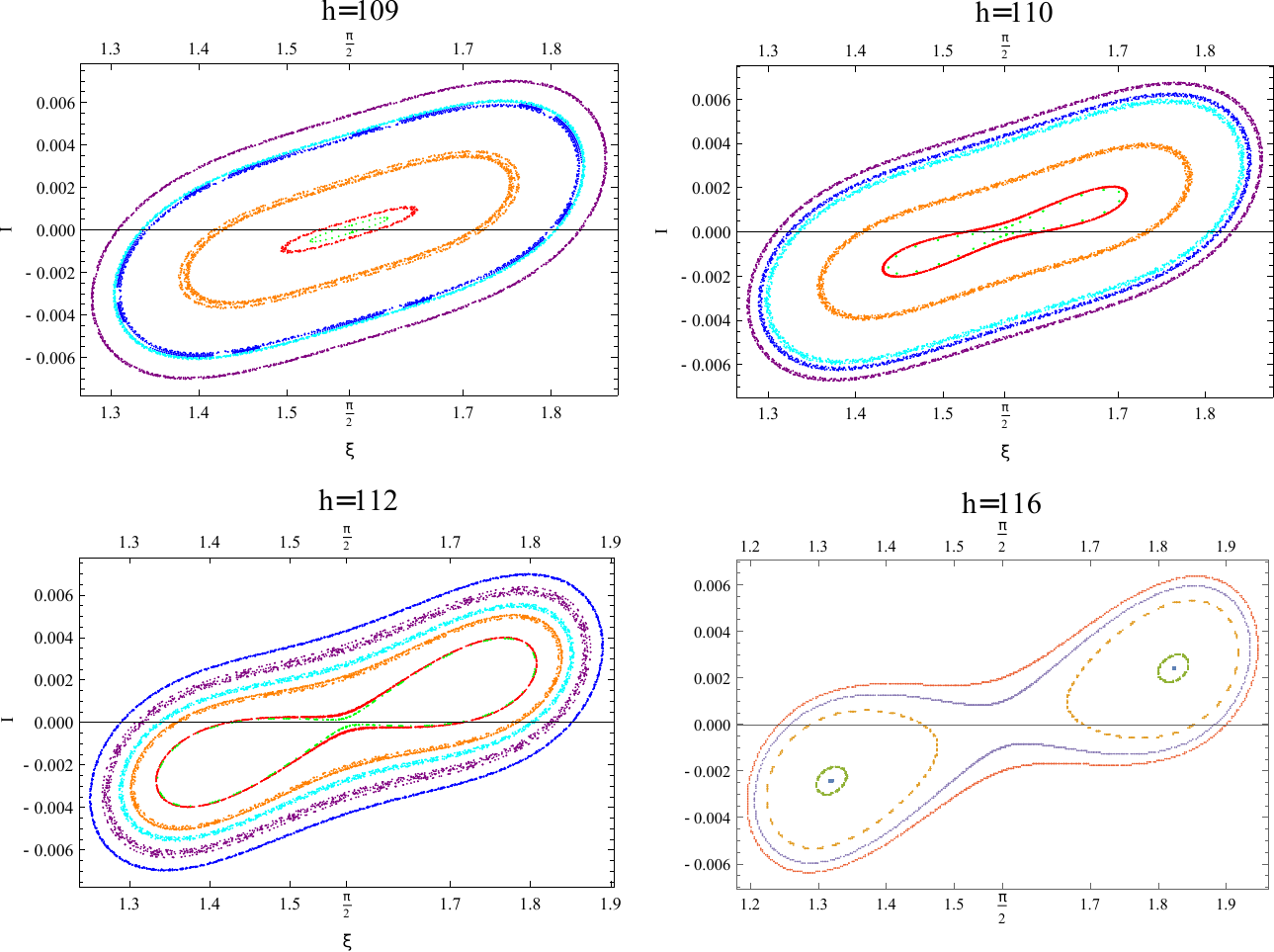}
		\caption{Orbits of $F$ in a neighborhood of the homotetic fixed point $(\pi/2,0)$ for $\E=2.5, \omega=\sqrt{2},\mu=2, e=0.1$ and different values of $h$. The transition of the fixed point from center to saddle is evident. Bottom-Right: the 2-periodic fixed point is detected as the 2-points blue \textcolor{black}{orbit}.}
		\label{fig:transizione-brake}
	\end{figure}
Figure \ref{fig:transizione-brake} shows the transition of $(\pi/2,0)$ from center to saddle, with the concurrent arising of a two periodic orbit. With reference to Theorem \ref{teorema brake}, we have in this case $\bar{\mu}\simeq0.0511$, while $\bar{h}=h_{bif}$: the treshold value for the existence of the 2-periodic brake orbits is then equal to the one for the change of stability of the homotetic equilibrium point, underlying the concurrence of the two phenomena. \\
\begin{figure}
	\centering
	\includegraphics[width=1\linewidth]{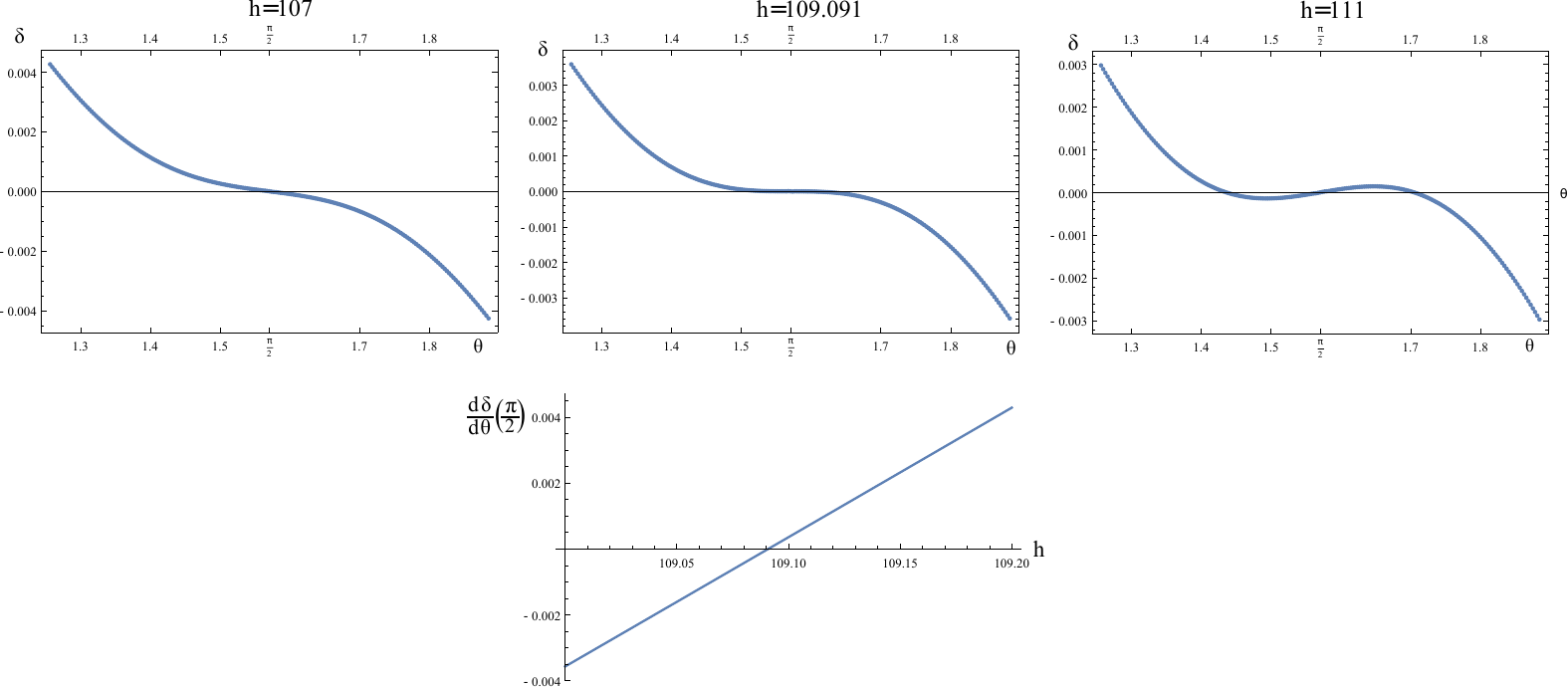}
	\caption{Plot of the free fall map (top) and its derivative in $\pi/2$ (bottom) as a function of $h$. The other parameters are $\E=2.5, \omega=\sqrt{2}, \mu=2, e=0.1.$ }
	\label{fig:free-fall-grafici}
\end{figure}
The direct study of the Free Fall map corroborates these findings. As a matter of fact, Figure \ref{fig:free-fall-grafici} shows the plots of $\delta(\theta)$ in a neighborhood of $\theta=\pi/2$ for different values of $h$ (before $h_{bif}$, at $h_{bif}$ and after it), along with the value of $\delta'(\pi/2)$ as a function of $h$: as one can see, before the bifurcation value the free fall map is strictly decreasing, while for $h=h_{bif}$ it has an inflection point with zero derivative at $\pi/2$. After the bifurcation value, two zeros, corresponding precisely to the brake orbits values of $\theta$, appear.

\end{esempio}

\section{Numerical simulations}\label{ssec: numerics}
	As already pointed out in Section \ref{ssec: brake}, the validity of the analytical investigations can be corroborated by a direct comparison with the plots of the map $F$ in specific cases, which highlights the variety of the behavuiours of the dynamics for different values of the involved parameters.\\
	This Section aims to gather cases of interest for the dynamics, underlying the effective role of the bifurcations in the change of stability and the subsequent arising or disappearence of new periodic points for $F$, as well as the potential presence of diffusive \textcolor{black}{orbits}, that represents a strong signal of caoticity.  \\
	All the below simulations are performed by considering $D$ as an ellipse centered in the origin, with semiaxes $a=1$ and $b=\sqrt{1-e^2}$, for different values of $e$. The routine is implemented in \textit{Mathematica}, and involves the numerical integration for the outer problem in its original form and, in order to avoid the numerical instability due to the presence of the possible singularity, of the inner problem in its regularised formulation.

\begin{figure}
	\centering
	\includegraphics[height=.23\textheight]{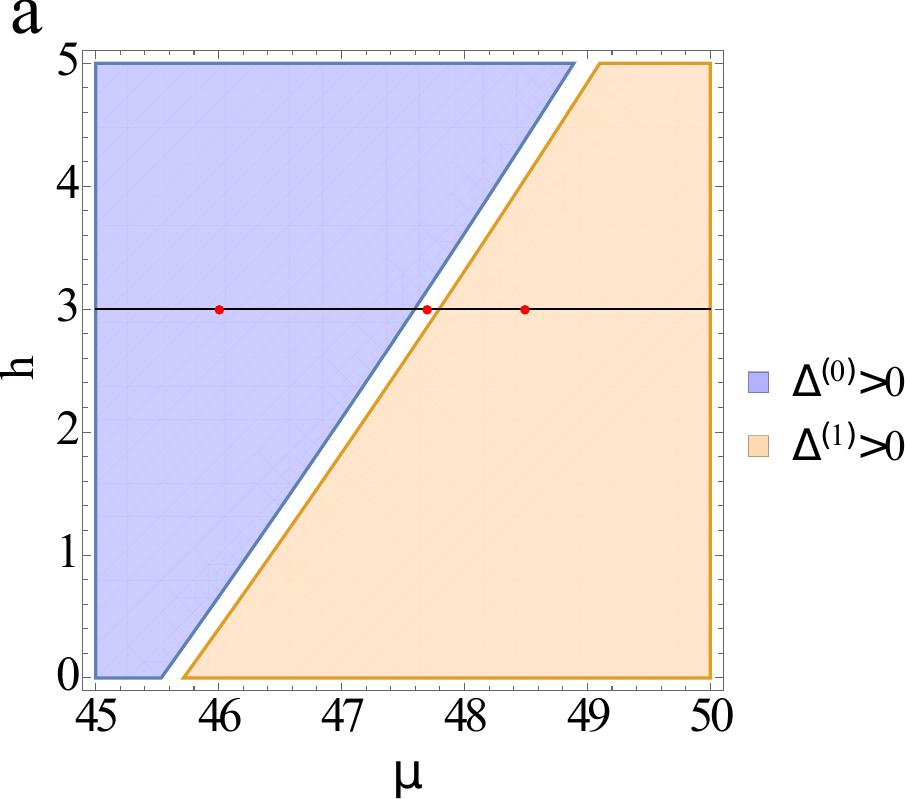}
	\includegraphics[height=0.23\textheight]{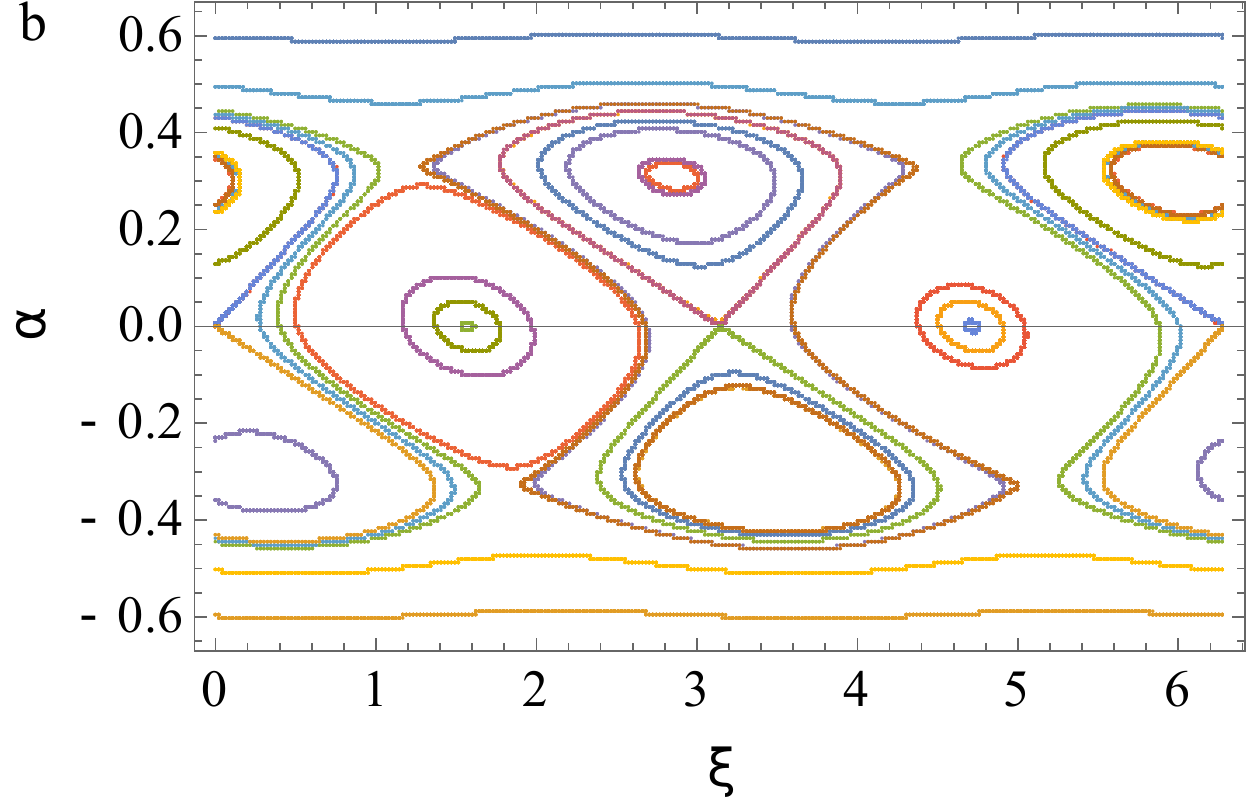}
	\includegraphics[height=0.23\textheight]{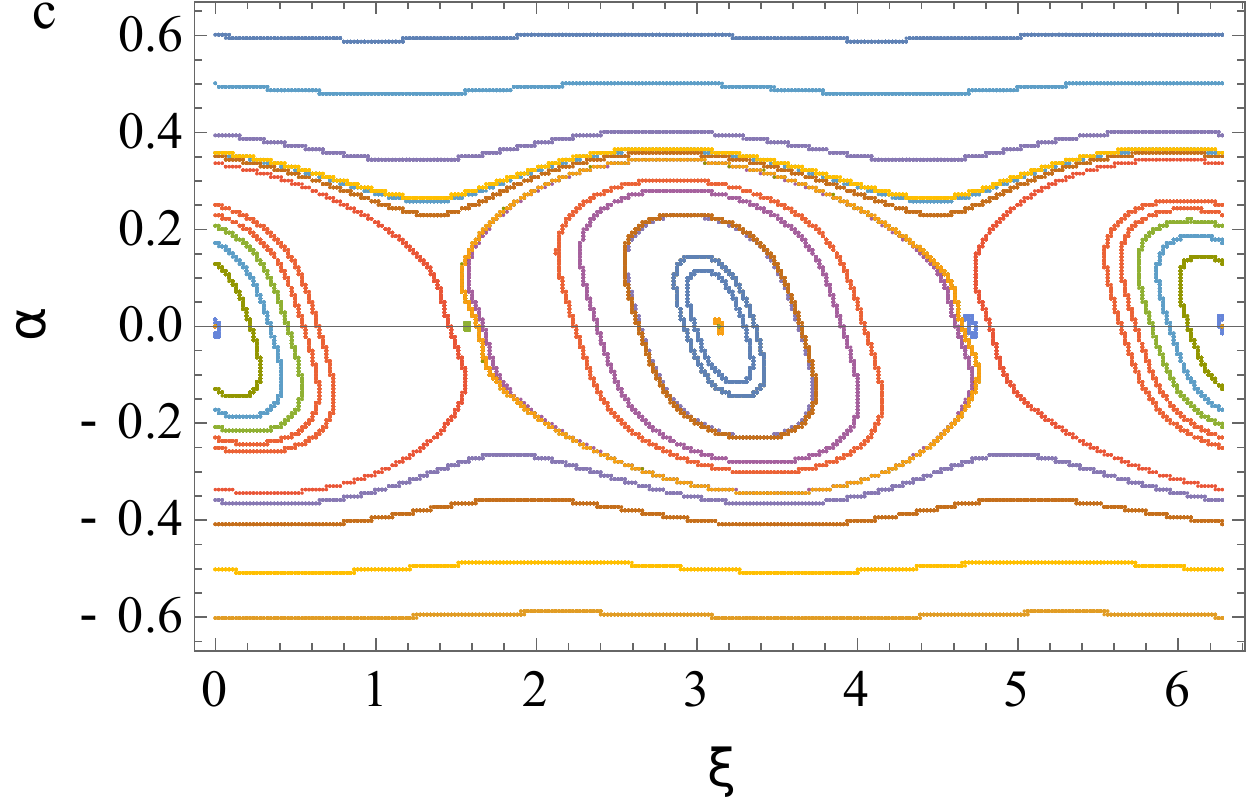}
	\includegraphics[height=0.23\textheight]{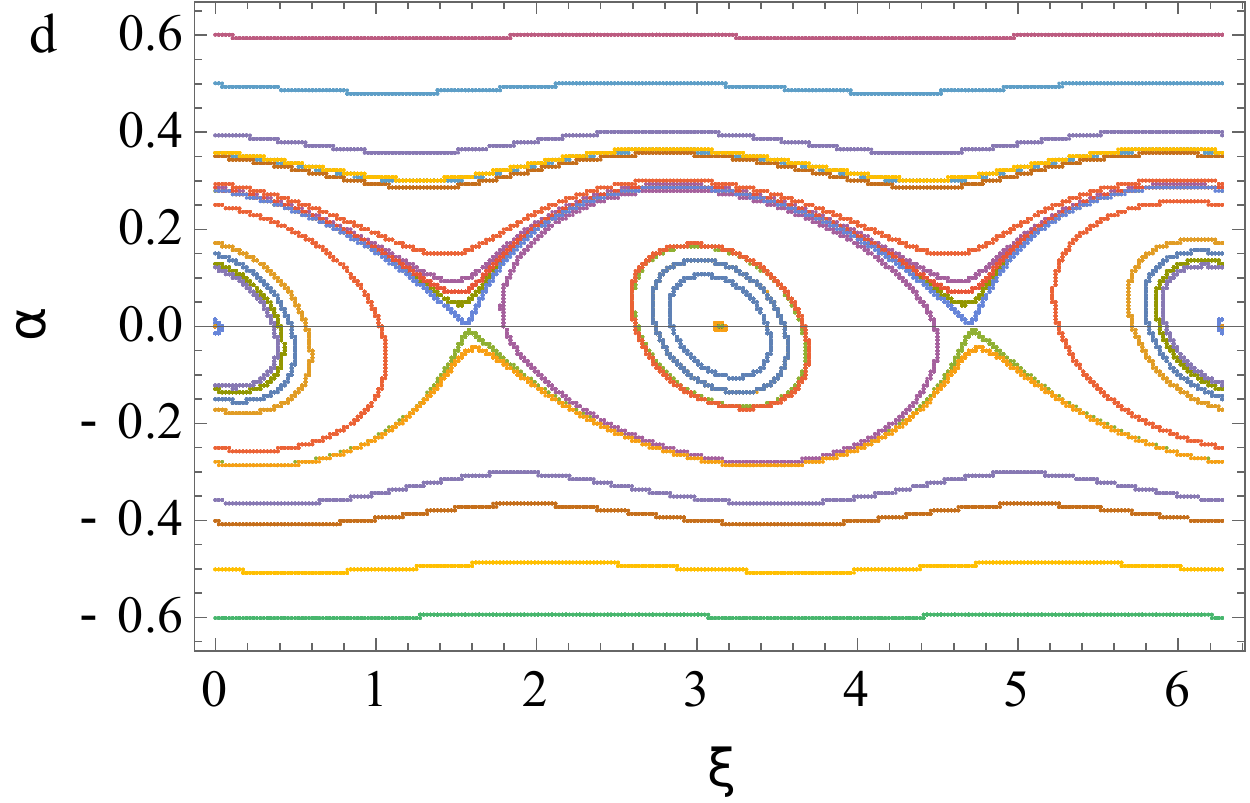}
	\caption{Bifurcations and plot of $F$ for a small eccentricity in a neigborhood of the axis $\alpha=0$. \textbf{(a)} Sign of $\Delta^{(0)}$ and $\Delta^{(1)}$ for $\E=9$, $\omega=1$, $e=0.03$ as a function of $h$ and $\mu$. The red dots correspond to $h=3$, $\mu=46$ \textbf{(b)}, $h=3$, $\mu=47.7$ \textbf{(c)} and $h=3$, $\mu=48.5$ \textbf{(d)}. }  
	\label{fig:e=0.03}
\end{figure}
	Figure \ref{fig:e=0.03}  shows the transition of the map through different stability regimes as the parameters modify the sign of $\Delta^{(0)}$ and $\Delta^{(1)}$. The changes of stability of $(0,0)$ between \textbf{(b)} and \textbf{(c)} and of $(\pi/2,0)$ between \textbf{(c)} and \textbf{(d)} are consistent with the plot of the discriminants scketched in \textbf{(a)}. In this case, where the eccentricity is small and the parameters are not much different from each other, the maps results to be regular also in the vicinity of the fixed points. We observe that in all the considered cases, for high values of $\alpha$ the map results essentially in a rotation on the ellipse, with small oscillations in $\alpha$. A noticeable fact is represented by the complexive number of stable and unstable equilibria in each regime, which is the same even in the case of generation of non-trivial fixed points for $\alpha\neq0$: 
	\begin{itemize}
		\item in the case \textbf{(b)} the saddle nature of $(0,0)$ and $(0,\pi)$ give rise to four non-homotetic stable fixed points, whose presence are balanced by four non-homotetic saddles in the vicinity of $(\pi/2,0)$ and $(3\pi/2,0)$; 
		\item in the case \textbf{(c)}, all the homotetic fixed points result to be stable; although the stable equilibrium points generated by the saddles in $(0,0)$ and $(\pi, 0)$ disappear with their change of stability, the saddles near to $(\pi,0)$ and $(3\pi/2,0)$ still remain, leading to four stable and four unstable points; 
		\item in the case \textbf{(d)}, the stability of the homotetic points is balanced, and no other equilibrium points are detected. 
	\end{itemize}
This non-trivial fact is coherent with the results one can obtain by applying the theory of the topological degree to the study of the stability of the fixed points in a discrete dynamical system (cf. \textcolor{black}{\cite{neumann1977generalizations}}), although the rigorous application of such theory would require the good definition and non-degeneration of $F$ on the whole ellipse. \\
In view of the approximation given in Section \ref{ssec: ecc piccola}, it is reasoneable to think that for small eccentricities and small values of the physical parameters the dynamics induced by $F$ does not differ much to the one sketched in Figure \ref{fig:e=0.03}. Nevertheless, when de ellipse becomes more eccentric or the parameters differ much from each others, a variety of behaviours can manifest, including the presence of diffusive \textcolor{black}{orbits}, that are strong indicators of chaos. 

\begin{figure}
	\centering
	\includegraphics[height=.22\textheight]{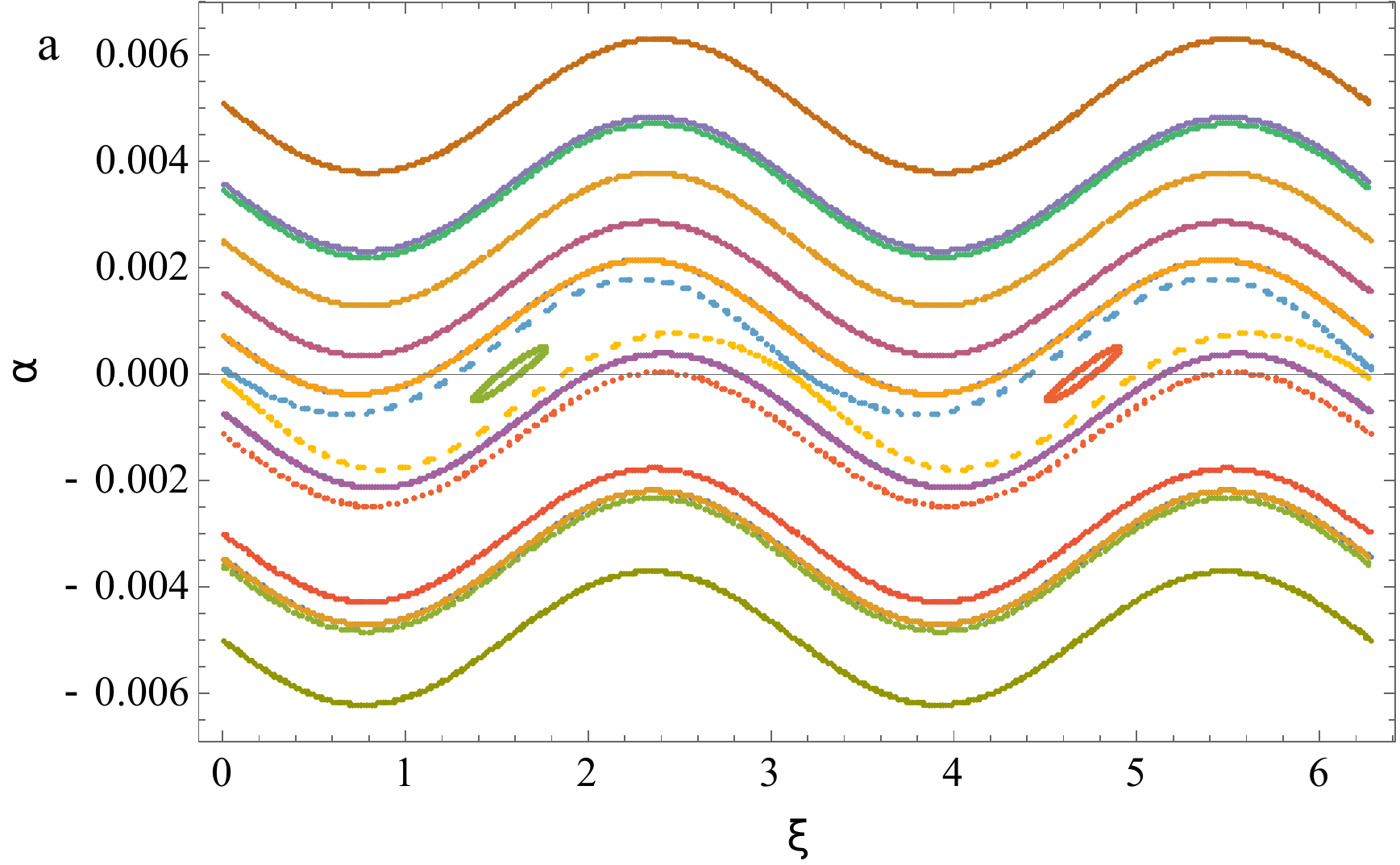}
	\includegraphics[height=.22\textheight]{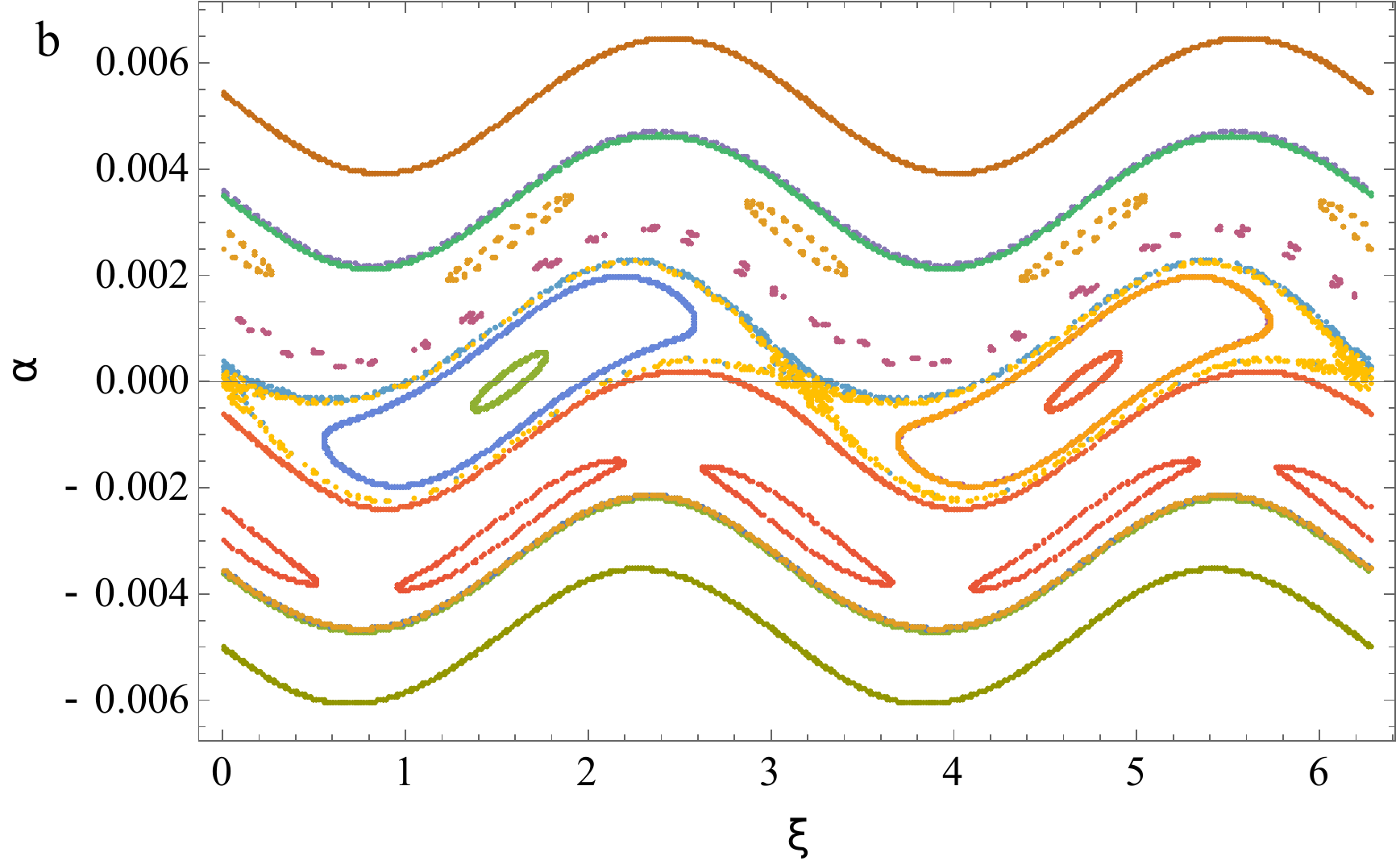}
	\includegraphics[height=.22\textheight]{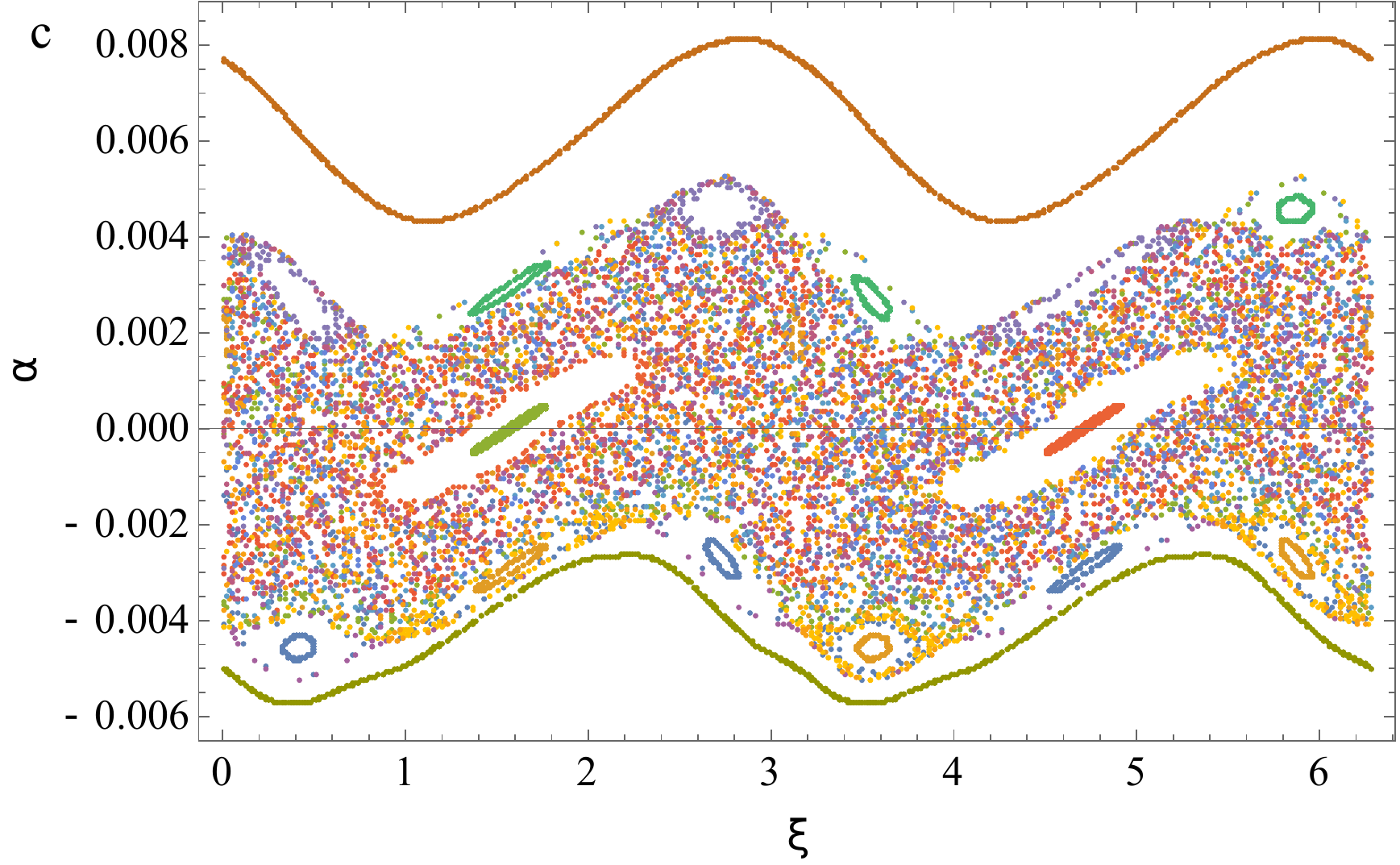}
	\caption{Plots of $F$ for $e=0.05$, $\E=20, \omega=1, \mu=0.13$ and $h=1$ \textbf{(a)}, $h=10$ \textbf{(b)}, $h=40$ \textbf{(c)}. The chaotic behaviour around $(0,0)$ and $(\pi, 0)$ is evident even for very small eccentricities. }
	\label{fig:e05-h2}
\end{figure}
	
Figure \ref{fig:e05-h2} shows the transition of $F$ for $e=0.05$, $\E=20, \omega=1, \mu=0.13$ and $h=1$ \textbf{(a)}, $h=10$ \textbf{(b)}, $h=40$ \textbf{(c)}, namely, for $e$ very small but with a high difference in magnitude between $h$ and $\mu$. In the considered regime, direct computations assure that $\Delta^{(0)}>0$ and $\Delta^{(1)}<0$, leading to the conclusion $(\pi/2,0)$ is a center and $(0, 0)$ is an unstable saddle. For increasing values of $h$, the saddle \textcolor{black}{orbits} around $(0,0)$ tend to diffuse, leading finally a chaotic cloud which surrounds the two stability islands. As in the case of Figure \ref{fig:e=0.03}, the chaotic region is bounded by invariant curves which induce oscillating rotations on the ellipse. Furthermore, periodic orbits of period 4 \textbf{(b)} and 3 \textbf{(c)} are detectable. \\
The other factor which can induce chaotic behhaviour is the increasing eccentricity of the domain. 

\begin{figure}
	\centering
	\includegraphics[height=.23\textheight]{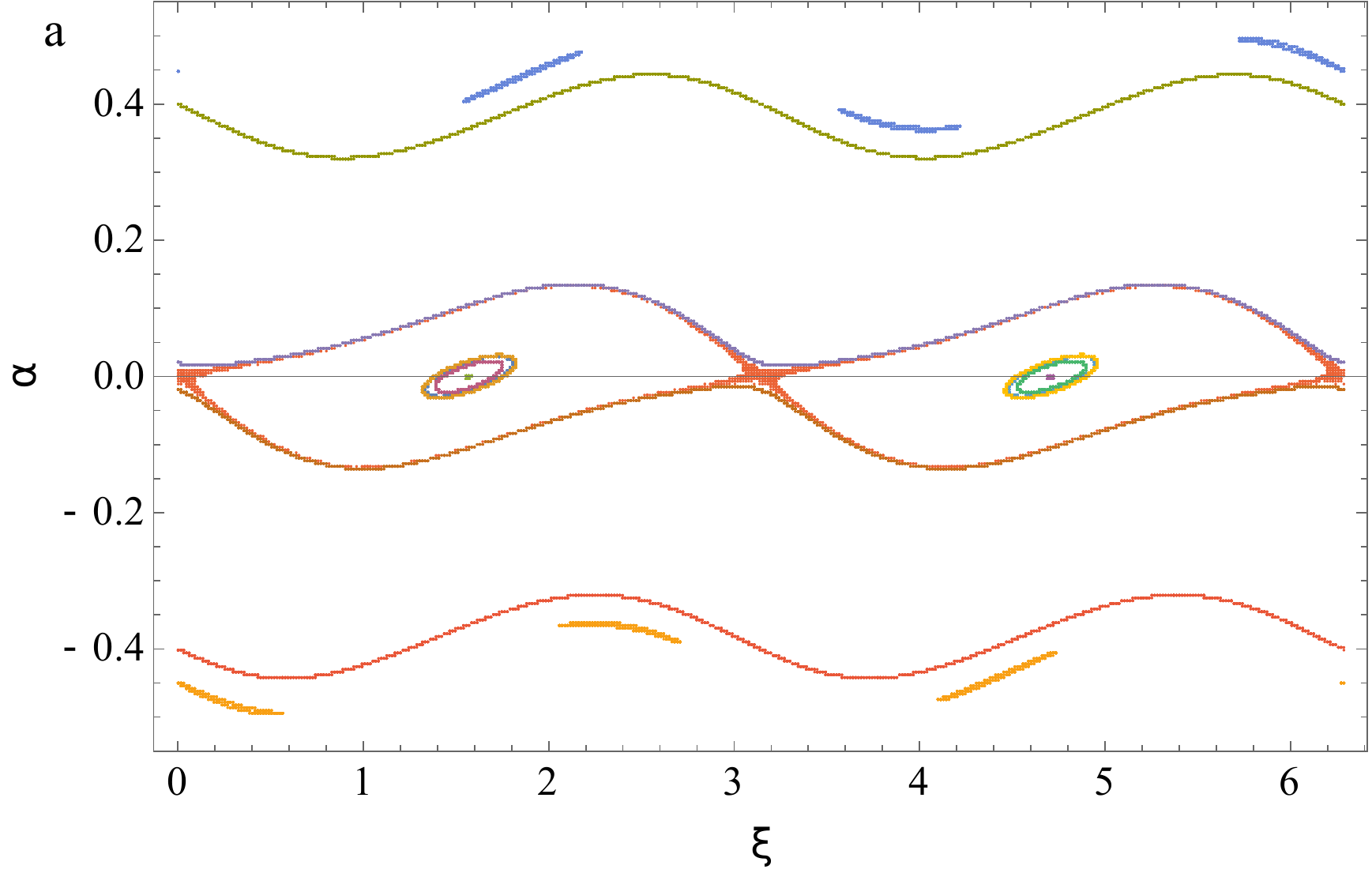}
	\includegraphics[height=.23\textheight]{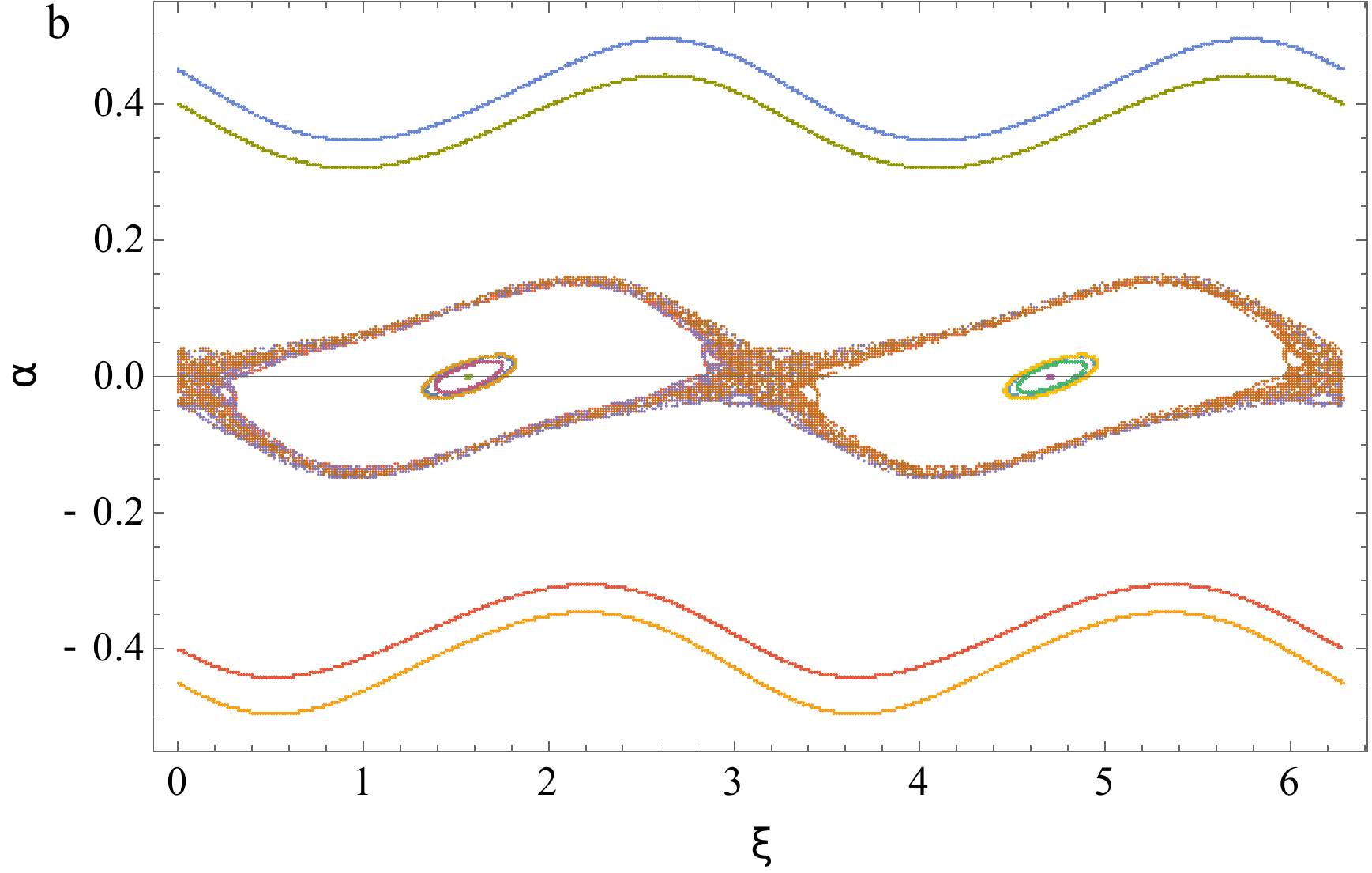}
	\includegraphics[height=.23\textheight]{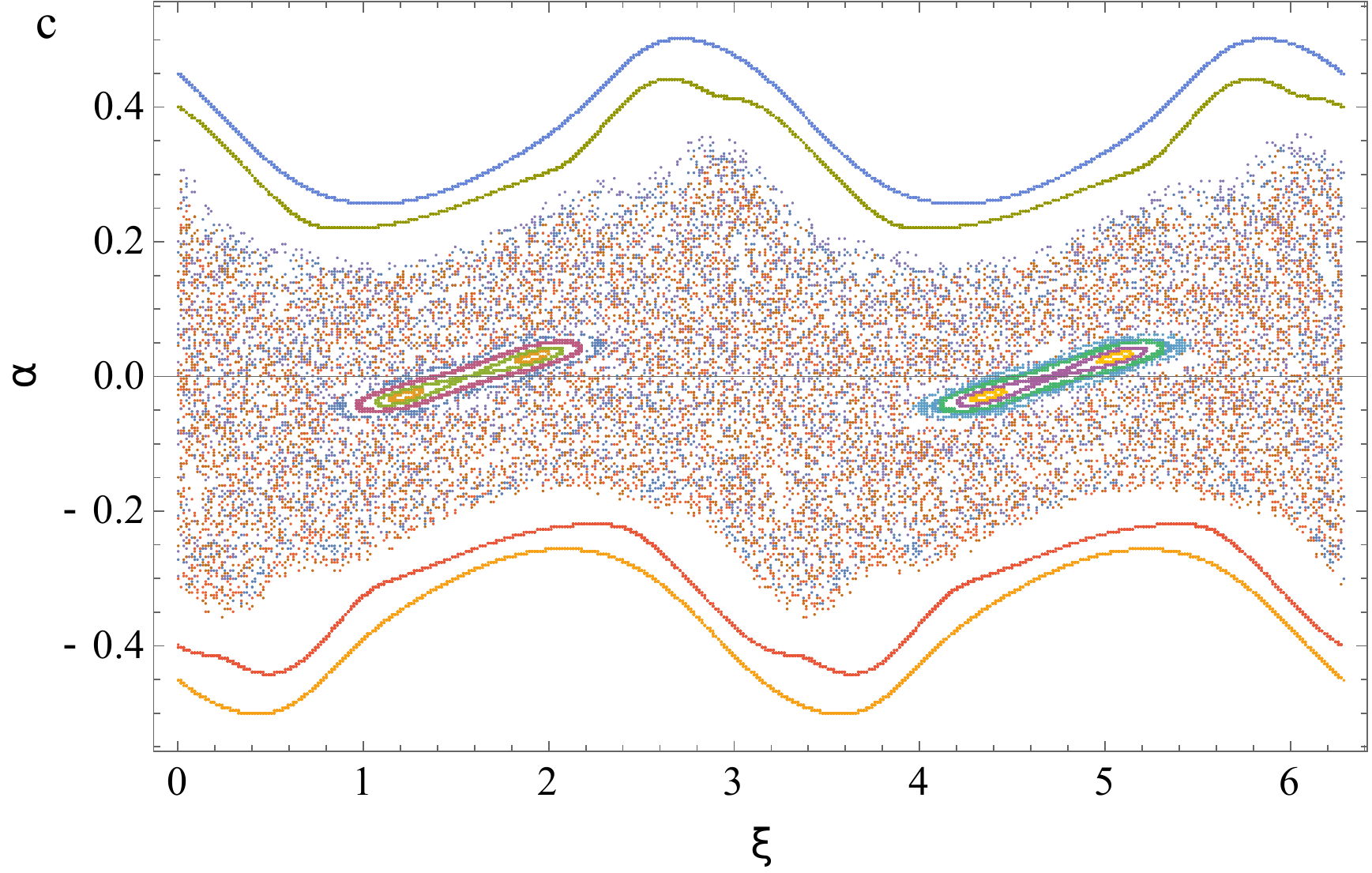}
	\includegraphics[height=.23\textheight]{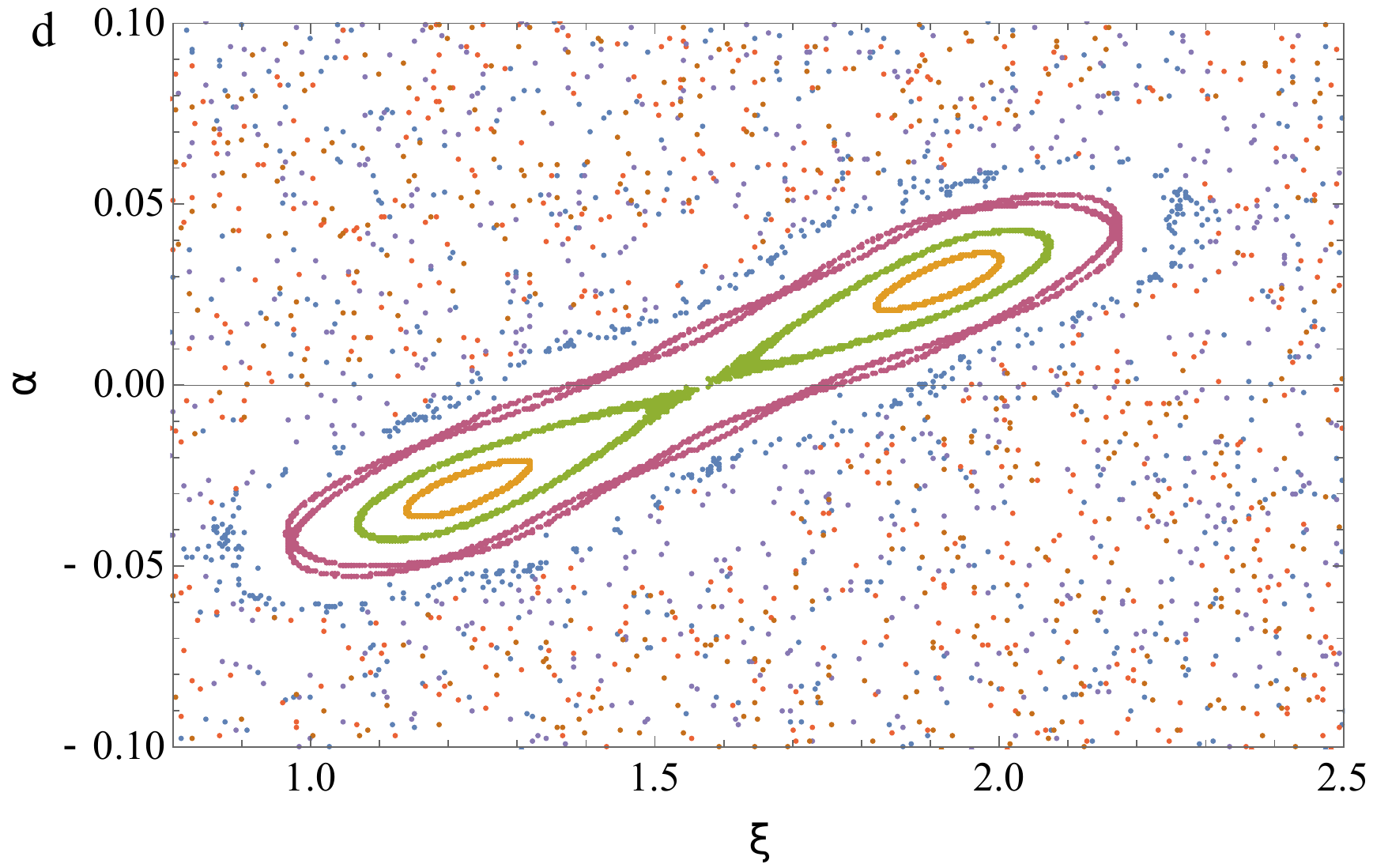}
	\caption{Plots of $F$ for $e=0.3$, $\E=2.5, \omega=\sqrt{2}, \mu=1$ and $h=0.1$ \textbf{(a)}, $h=1$ \textbf{(b)}, $h=7$ \textbf{(c)}. \textbf{(d)}: refining of \textbf{(c)} in a neighborhood of $(\pi/2,0)$. }
	\label{fig:e3-h-01}
\end{figure}

Figure \ref{fig:e3-h-01} illustrates how for moderate values of the eccentricity the system could have diffusive \textcolor{black}{orbits} around the unstable fixed points, even for small values of the physical parameters. This case is analogous to Figure \ref{fig:transizione-brake}, where the transition of $(\pi/2,0)$ from center to saddle produce two 2-periodic brake orbits. \\

\appendix
\section{Preliminaries: the variational approach and the generalized Snell's law}\label{sec:appA}
\subsection{The variational approach}
\textcolor{black}{Recalling the definitions given in Section \ref{sec:variational}}, let us define the quantity 
	\begin{equation}\label{lambda}
		\lambda^2=\frac{\int_0^1V(z(t))dt}{\frac{1}{2}\int_0^1\vert \dot{z}(t)\vert^2dt}>0.
	\end{equation}
As the following Proposition shows, the critical points of the Maupertuis functional $M$ are reparametrisations of solutions of (\ref{general prob}).
\begin{prop}\label{prop Mau}
	Let $z(t)\in H_{z_0,z_1}$ be such that $M(z)>0$ and $\forall v\in H^1_0([0,1],\mathbb{R}^2)$ $dM(z)[v]=0$. Then $z(s)=u(t(s))$ with $t(s)=\lambda s$ is a classical solution of 
	\begin{equation}\label{prop sys}
		\begin{cases}
			\ddot{z}(t)=\nabla V(z(t)) \quad &t\in[0,1/\lambda]\\\frac{1}{2}\vert \dot{z}(t)\vert^2-V(z(t))=0 &t\in[0,1/\lambda]\\
			z(0)=z_0, \quad z(1/\lambda)=z_1.
		\end{cases}
	\end{equation}
\end{prop}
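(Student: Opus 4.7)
My plan is to prove this statement via the classical Maupertuis--Jacobi principle, in three steps: (i) derive the Euler--Lagrange equation from the vanishing of the first variation of $M$; (ii) exploit the time--translation invariance of the Maupertuis Lagrangian to extract the conservation law $|\dot z|^2 V(z) = c$ along a critical point; (iii) verify that the linear rescaling $t = \lambda s$, with $\lambda$ given by (\ref{lambda}), converts the resulting pair of identities into the Newton equation and the zero--energy condition of (\ref{prop sys}). The boundary conditions $z(0) = z_0$, $z(1/\lambda) = z_1$ then come for free from the definition of $H_{z_0,z_1}$ and the rescaling.

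For step (i), I fix an arbitrary test function $v \in H^1_0([0,1], \mathbb{R}^2)$ and compute
\begin{equation*}
dM(z)[v] = \int_0^1 \bigl( 2\, \dot z(t) \cdot \dot v(t)\, V(z(t)) + |\dot z(t)|^2\, \nabla V(z(t)) \cdot v(t) \bigr)\, dt.
\end{equation*}
Integration by parts, using $v(0) = v(1) = 0$, combined with the fundamental lemma of the calculus of variations, yields
\begin{equation*}
\frac{d}{dt}\bigl( 2\, \dot z(t)\, V(z(t)) \bigr) = |\dot z(t)|^2 \nabla V(z(t)).
\end{equation*}
For step (ii), since the Maupertuis Lagrangian $\mathcal{L}(z, \dot z) := |\dot z|^2 V(z)$ does not depend explicitly on $t$, Noether's theorem (or direct differentiation using the Euler--Lagrange equation above) shows that the energy $\dot z \cdot \partial_{\dot z}\mathcal{L} - \mathcal{L} = |\dot z(t)|^2 V(z(t))$ is conserved along the critical point $z$, and is strictly positive because $M(z) > 0$.

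Step (iii) is the rescaling. Writing $z(s) := z(\lambda s)$ for $s \in [0, 1/\lambda]$, one finds $z'(s) = \lambda\, \dot z(\lambda s)$ and $z''(s) = \lambda^2\, \ddot z(\lambda s)$. The expanded Euler--Lagrange equation, together with the conservation law, reduces $\ddot z$ to a scalar multiple of $\nabla V(z)$; the specific choice of $\lambda^2$ in (\ref{lambda}) ensures that this multiple is $\lambda^{-2}$, so that $z''(s) = \nabla V(z(s))$. In parallel, the conservation law combined with the defining integrals of $\lambda^2$ produces the pointwise identity $\lambda^2 |\dot z|^2 = 2 V(z)$, which after rescaling reads $\tfrac{1}{2}\, |z'(s)|^2 = V(z(s))$.

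The main obstacle is precisely the reconciliation of the pointwise identity $\lambda^2 |\dot z|^2 = 2 V(z)$ with the \emph{global} definition of $\lambda^2$ as a ratio of integrals of $V$ and $|\dot z|^2$. This is the heart of the Maupertuis correspondence: it relies on the equality case in the underlying Cauchy--Schwarz inequality -- equivalently, on the relation $L^2 = 2M$ along critical points recalled in Remark \ref{L^2=2M} -- which forces $|\dot z|\sqrt{V(z)}$ to be constant along $z$ and thereby pins down the integration constant in the conservation law to exactly the value required for the zero--energy condition to hold pointwise.
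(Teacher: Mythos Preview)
Your step~(iii) contains a genuine gap. For the single--integral functional $M(z)=\int_0^1|\dot z|^2V(z)\,dt$ that you differentiate in step~(i), the Euler--Lagrange equation expands to
\[
2V(z)\,\ddot z \;+\; 2(\nabla V(z)\cdot\dot z)\,\dot z \;=\; |\dot z|^2\,\nabla V(z),
\]
and even after inserting the Noether conservation $|\dot z|^2 V(z)=c$ this does \emph{not} reduce $\ddot z$ to a scalar multiple of $\nabla V(z)$: decomposing along $\dot z/|\dot z|$ and its orthogonal, the tangential component of $\ddot z$ comes out with the opposite sign to that of $\nabla V$. Consequently no \emph{linear} rescaling $t=\lambda s$ can turn $z$ into a solution of $z''=\nabla V(z)$. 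Critical points of this single--integral functional are Jacobi--metric geodesics, and the correct reparametrisation to Newtonian time is the nonlinear one $ds/dt=\sqrt{2}\,V(z)/L$ of Remark~\ref{oss tempi}. Your appeal to Remark~\ref{L^2=2M} at the end is also circular, since that Remark relies on the identity~(\ref{kdef}) proved inside the present Proposition.

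The paper's proof avoids all of this by computing the first variation as if $M$ were the \emph{product} functional $\tfrac12\!\int_0^1|\dot z|^2\,dt\cdot\!\int_0^1 V(z)\,dt$ (note the separated integrals in the displayed variation). For that functional the weak Euler--Lagrange equation is directly $\lambda^2\ddot z=\nabla V(z)$ with $\lambda^2$ given by~(\ref{lambda}), so the linear rescaling $t=\lambda s$ yields $z''=\nabla V(z)$ at once; energy conservation $\tfrac{\lambda^2}{2}|\dot z|^2-V(z)=k$ then follows from this Newton equation, and integrating over $[0,1]$ and comparing with~(\ref{lambda}) forces $k=0$. No Noether argument or Cauchy--Schwarz is needed. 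The mismatch between the stated $M$ and the one actually varied is a wrinkle in the paper, but the linear rescaling in the statement is only compatible with the product form, and that is the route you should follow.
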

\begin{proof}
	$\forall v\in H_0^1([0,1],\mathbb{R}^2)$ one has that
	\begin{equation*}
		0=\int_0^1<\dot{z}(t),\dot{v}(t)>dt\int_0^1V(z(t))dt+\frac{1}{2}\int^1_0\vert\dot{z}(t)\vert^2dt\int_0^1<\nabla V(z(t)),v(t)>dt.
	\end{equation*}
	Since $M(z)>0\Rightarrow$ we can divide for $\frac{1}{2}\int_0^1\vert\dot{z}(t)\vert^2 dt>0$ and obtain
	\begin{equation*}
		0=\lambda^2\int_0^1<\dot{z}(t),\dot{v}(t)>dt\int_0^1V(z(t))dt+\int_0^1<\nabla V(z(t)),v(t)>dt,
	\end{equation*}
	then $z(t)$ is a weak solution (and, by regularity, strong), of $\lambda^2\ddot{z}(t)=\nabla V(z(t))$. Reparametrising $z(s)=z(t(s))$ with $t(s)=\lambda s$, and defining $'=d/ds$, one has $z''(s)=\nabla V(z(s))$ for $s\in[0,1/\lambda]$. As for the energy conservation, from $\lambda^2\ddot{u}(t)=\nabla V(u(t))$ there is $k\in\mathbb{R}$ such that 
	\begin{equation}\label{kdef}
		\frac{\lambda^2}{2}\vert\dot{z}(t)\vert^2-V(z(t))=k, 
	\end{equation}
	and, comparing (\ref{lambda}) with (\ref{kdef}), one obtains $k=0$.
\end{proof}
 
\begin{rem}\label{L^2=2M}
	By H\"older inequality, 
	\begin{equation*}
		L^2(z)\leq\int_0^1\vert\dot{z}(t)\vert^2V(z(t))dt=2M(z), 
	\end{equation*}
	on the other hand, if $z(t)$ is a solution of (\ref{prop sys}), from \ref{kdef} we have that $V(z(t))=\frac{\lambda^2}{2}\vert\dot{z}(t)\vert$ for all $t\in[0,1]$, then the equality holds and  $L^2(z)=2M(z)$. This means that, in order to find solutions of (\ref{general prob}), finding critical points of $M(z)$ at a positive level is equivalent to finding critical points of $L(z)$.
\end{rem}

\begin{rem}[Relation between geodesic and cinetic time]\label{oss tempi}
	Suppose that $z(t)\in H_0$ minimizes $L(z)$, then from Remark \ref{L^2=2M} it minimizes $L^2=2M$ at a positive level. From the energy conservation, one has that $\mathcal{L}=\vert\dot{z}(t)\vert^2V(z(t))=const>0,$ then $L=\vert\dot{z}(t)\vert\sqrt{V(z(t))}=const>0.$ Moreover, as $z(t)$ minimizes $M$, the Euler-Lagrange equation holds for the Lagrangian function $\mathcal{L}$: 
	\begin{equation*}	
		0=\frac{d}{dt}\left(\frac{\partial}{\partial \dot{z}}\mathcal{L}\right)-\frac{\partial}{\partial z}\mathcal{L}=\frac{d}{dt}\left(2\dot{z}(t)V(z(t))\right)-\vert \dot{z}(t)\vert^2\nabla V(z(t)).	
	\end{equation*}
	Consider now the reparametrisation $t=t(s)$ such that 
	\begin{equation*}
		\frac{d}{dt}=\frac{L}{\sqrt{2}V(z(t(s)))}\frac{d}{ds},
	\end{equation*}
	then
	\begin{equation*}
		\begin{split}
			0=&\frac{L}{\sqrt{2}V(z(t(s)))}\frac{d}{ds}\left(2V(z(t(s)))\frac{L}{\sqrt{2}V(z(t(s)))}\frac{d}{ds}z(t(s))\right)-\frac{L^2}{V(z(t(s)))}\nabla V(z(t(s)))\\
			&\Leftrightarrow z''(s)=\nabla V(z(s)),
		\end{split}
	\end{equation*}
	namely $s$ is the cinetic time. One can compute the cinetic period $T$ as 
	\begin{equation*}
		T=\int_0^T ds=\int_0^1 \frac{L}{\sqrt{2}V(z(t))}dt=\frac{1}{\sqrt{2}L}\int_0^1\vert\dot{z}(t)\vert dt,
	\end{equation*}
	while the relation between $L$ and the Lagrangian action $\mathcal{A}$ is given by
	\begin{equation*}
		\mathcal{A}=\int_0^T\left(\frac{1}{2}\vert z'(s)\vert^2+V(z(s))\right)ds=\int_0^1(2V(z(t)))\frac{L}{\sqrt{2}V(z(t))}dt=\sqrt{2}L.	\end{equation*}
\end{rem}

\subsection{The generalized Snell's law}\label{ssec: appA2}

Let us suppose that $\mathbb{R}^2$ is divided into two regions by an interface $\Sigma$, defined as the trace of a regular curve $\sigma(\xi)$, $\xi\in I\subset\mathbb{R}$. Let us call $A$ and $B$ the two regions of $\mathbb{R}^2$, such that $\mathbb{R}^2=A\cup B\cup\Sigma$. Suppose that in $A$ and in $B$ two generic Riemannian metrics are defined, such that 
\begin{equation}\label{psA}
	\begin{split}
		\forall z\in A,\text{}\forall v,w\in T_z\mathbb{R}^2\quad &<v,w>_{A,z}=\sum_{i,j=1}^2a_{ij}(z)v_iw_j, \\
		\forall z\in B,\text{}\forall v,w\in T_z\mathbb{R}^2\quad &<v,w>_{B,z}=\sum_{i,j=1}^2b_{ij}(z)v_iw_j,
	\end{split}
\end{equation}
\textcolor{black}{where the coefficients $a_{ij}$ and $b_{ij}$ are of class $C^2$: therefore, we can assume without loss of generality that for any pair of points in $\bar A$ (respectively in $\bar B$) there is a unique geodesic which connects them, provided they are sufficiently close.} We can define the respective Jacobi lengths and distances (from now on, all the indices in the sums go from $1$ to $2$):
\begin{equation*}
	\begin{split}
		L_A(\alpha)=\int_0^1\vert\dot{\alpha}(t)\vert_{A,\alpha(t)} dt=\int_0^1\sqrt{\sum_{i,j}a_{ij}(\alpha(t))\dot{\alpha}_i(t)\dot{\alpha}_j(t)}dt\\
		L_B(\alpha)=\int_0^1\vert\dot{\alpha}(t)\vert_{B,\alpha(t)} dt=\int_0^1\sqrt{\sum_{i,j}b_{ij}(\alpha(t))\dot{\alpha}_i(t)\dot{\alpha}_j(t)}dt\\
		d_{A\backslash B}(z_0,z_1)=\min\{L_{A\backslash B}(\alpha)\quad\vert\quad\alpha(0)=z_0,\text{}\alpha(1)=z_1\}
	\end{split}
\end{equation*}
Fixed $z_A\in A$ and $z_B\in B$, we want to find $\bar{z}\in\Sigma$ such that 
\begin{equation}\label{stat}
	d_A(z_A,\bar{z})+d_B(\bar{z},z_B)=\min_{z\in\Sigma}\left(d_A(z_A,z)+d_B(z,z_B)\right). 
\end{equation}
This means that $\bar{z}$ fulfills the stationarity condition
\begin{equation*}
	\forall e\in T_z\Sigma\quad \partial_{e,z}(d_A(z_A,z)+d_B(z,z_B))=\partial_{e,w}d_A(z_A,z)+\partial_{e,v}d_B(z,z_B)=0, 
\end{equation*}
where: \begin{itemize}
	\item  $\partial_{e,v}d(z_0,z_1)=\frac{d}{d\epsilon}d(z_0+\epsilon e,z_1)_{\vert_{\epsilon=0}}$
	\item $\partial_{e,w}d(z_0,z_1)=\frac{d}{d\epsilon}d(z_0,z_1+\epsilon e)_{\vert_{\epsilon=0}}$
\end{itemize}
To compute the directional derivatives of $d_A$, suppose that the curve $\alpha^A(t;z_0,z_1)$ minimizes $L_A(\alpha)$ over all the curves $\alpha(t)$ such that $\alpha(0)=z_0$ and $\alpha(1)=z_1$;  from the minimization properties of $\alpha^A$, $d_A(z_A,z)=L_A(\alpha^A)$, so that $\partial_{e,w}d_A(z_A,z)=\partial_{e,w}L_A(\alpha^A)$. Generalizing Remark \ref{L^2=2M}, $\alpha^A$ solves the Euler-Lagrange equation with $\mathcal{L}=\sum_{i,j}a_{i,j}(\alpha^A(t))\dot{\alpha}^A_i(t)\dot{\alpha}^A_j(t)$, namely, for $k=1,2$, 
\begin{equation}\label{E-L snell}
	0=-\frac{d}{dt}\left(2\sum_ia_{ik}(\alpha^A)\dot{\alpha}^A_i\right)_k+\left(\sum_{i,j}\frac{\partial a_{ij}}{\partial x_k}(\alpha^A)\dot{\alpha}^A_i\dot{\alpha}^A_j\right)_k. 
\end{equation}
Define $e$ as the unit vector tangent to $\Sigma$ in $z$ (namely, $e=\dot{\sigma}(\bar{\xi})/\vert\dot{\sigma}(\bar{\xi})\vert$, with $\sigma(\bar{\xi})=z$) and consider 
\begin{equation*}
	\partial_{e,w}\alpha^A(t)=\partial_{e,w}\alpha^A(t;z_0,z_1)=\frac{d}{d\epsilon}\alpha^A(t;z_0,z_1+\epsilon e)_{\vert_{\epsilon=0}}. 
\end{equation*} 
One can easily observe that $\partial_{e,w}\alpha^A(0)=\boldsymbol{0}$ and $\partial_{e,w}\alpha^A(1)=e$. Computing the directional derivative of $L^2_A(\alpha^A)$, one obtains 
\begin{equation}\label{derL}
	\begin{split}
		2L_A(\alpha^A)\partial_{e,w}L_A(\alpha^A)&=\partial_{e,w}L^2_A(\alpha^A)=\partial_{e,w}\int_0^1\sum_{i,j}a_{ij}(\alpha^A)\dot{\alpha}^A_i\dot{\alpha}^A_j=\\
		&=\int_0^1\sum_{i,j,k}\frac{\partial a_{ij}}{\partial x_k}(\alpha^A)\dot{\alpha}^A_i\dot{\alpha}^A_j\partial_{e,w}\alpha^A_k+2\sum_{i,j}a_{ij}(\alpha^A)\dot{\alpha}^A_i\partial_{e,w}\dot{\alpha}^A_j.
	\end{split}
\end{equation}
Multiplying (\ref{E-L snell}) with $\partial_{e,w}\alpha^A=(\partial_{e,w}\alpha^A_k)_k$ and integrating for $t\in[0,1]$:
\begin{equation*}
	\begin{split}
		0&=\sum_k\int_0^1\left[-\frac{d}{dt}\left(2\sum_ia_{ik}(\alpha^A)\dot{\alpha}^A_i\right)\partial_{e,w}\alpha^A_k+\sum_{i,j}\frac{\partial a_{ij}}{\partial x_k}(\alpha^A)\dot{\alpha}^A_i\dot{\alpha}^A_j\partial_{e,w}\alpha^A_k\right]=\\
		&=-2\sum_{i,k}a_{ik}(\alpha^A(1))\dot{\alpha}^A_ie_k+2L_A(\alpha^A)\partial_{e,w}L_A(\alpha^A),
	\end{split}
\end{equation*}
where, in the last equality, we used (\ref{derL}) and the fact that $\partial_{e,w}\alpha^A(1)=e$. Recalling (\ref{psA}), and repeating the analogous computations for $\partial_{e,v}d_B(z,z_B)$, one obtains
\begin{equation}\label{dirder}
	\partial_{e,w}d_A(z_A,z)=\frac{<\dot{\alpha}^A(1),e>_{A,z}}{\vert\dot{\alpha}^A(1)\vert_{A,z}},\quad\partial_{e,v}d_B(z,z_B)=-\frac{<\dot{\alpha}^B(0),e>_{B,z}}{\vert\dot{\alpha}^B(0)\vert_{B,z}}.
\end{equation}
Comparing (\ref{stat}) and (\ref{dirder}), one can finally find the junction condition
\begin{equation}\label{snell law}
	\frac{<\dot{\alpha}^A(1),e>_{A,\bar{z}}}{\vert\dot{\alpha}^A(1)\vert_{A,\bar{z}}}=\frac{<\dot{\alpha}^B(0),e>_{B,\bar{z}}}{\vert\dot{\alpha}^B(0)\vert_{B,\bar{z}}},
\end{equation}
which can be intepreted as a conservation law for the tangential component of the velocity vector before and after crossing the interface $\Sigma$. \\
Looking at the potentials defined in (\ref{potential}), the two metrics are expressed by 
$a_{ij}(z)=V_I(z)\delta_{ij}$ and $b_{ij}(z)=V_E(z)\delta_{ij}$: in this case, in view of the regularity of both the potentials far from the origin, the uniqueness of the geodesics is guaranteed by the existence of two suitable strongly convex neighborhoods of $z_A$ and $z_B$, if they are near enough to $\Sigma$. Denoting with $\cdot$ 
the scalar product in the Euclidean metric of $\mathbb{R}^2$, 
\begin{equation*}
	<v,w>_{A,z}=V_I(z)v\cdot w,\quad <v,w>_{B,z}=V_E(z)v\cdot w,  
\end{equation*} 
and, if we define $z_E(t)=\alpha^A(t)$ and $z_I(t)=\alpha^B(t)$, equation (\ref{snell law}) becomes
\begin{equation}\label{snell law 2}
	\sqrt{V_E(\bar{z})}\frac{\dot{z}_E(1)}{\vert\dot{z}_E(1)\vert}\cdot e=\sqrt{V_I(\bar{z})}\frac{\dot{z}_I(0)}{\vert\dot{z}_I(0)\vert}\cdot e. 
\end{equation}

\section{Local existence of inner and outer arcs}\label{appB}

This Appendix presents an extensive discussion, along with the proofs, of the framework which leads to the results enlisted in Section \ref{sec:variational}. 

\subsection{Local existence of the outer arcs}
Recalling Notation \ref{notazione}, we want to ensure the existence, under some suitable hypotheses on the initial conditions, of solutions to the problem 
\begin{equation}\label{eq:outer_dynamics app}
	\begin{cases}
		(\Prob_E)[y(s)]& s\in[0,T]\\ 
		y(0)=y_0, y'(0)=\mathbf{v}_0	
	\end{cases}
\end{equation}
such that $y(T)\in\partial D$. 

\begin{lemma}\label{lemma esistenza outer app}
	Suppose that $\bx\in I$ satisfies condition (\ref{cond Hom}). Then there exist $\delta_0(\bar{\xi}), \delta_1(\bar{\xi}), \rho(\bar{\xi})>0$ such that for every $\xi_0\in I$, $\dot{\theta}_0\in\mathbb{R}$ with $\vert\bar{\xi}-\xi_0\vert<\delta_0(\bar{\xi})$ and $\vert\dot{\theta}_0\vert<\rho(\bar{\xi})$, defined the unit vectors (in exponential notation) $\hat{x}_1=\gamma({\bar{\xi}})/\vert\gamma(\bar{\xi})\vert$ and $\hat{x}_2=i\hat{x}_1$, there exist $T>0$ and $\xi_1\in I$ such that the problem 
	\begin{equation*}
		\begin{cases} 
			(\Prob_E)[y(s)] & s\in[0,T]\\ 
			y(0)=\gamma(\xi_0), y'(0)=\dot{r}_0 \hat{x}_1+\dot{\theta}_0 \hat{x}_2, 
		\end{cases}
	\end{equation*}
	with $\dot{r}_0=\dot{r}_0(\dot{\theta}_0)=\sqrt{2\mathcal{E}-\omega^2\vert\gamma(\xi_0)\vert-\dot{\theta}_0^2}$, admits the unique solution $y(s;\xi_0, \dot{\theta}_0)$ and $y(T;\xi_0, \dot{\theta}_0)=\gamma(\xi_1)\in\partial D$. Moreover, $\vert\bar{\xi}-\xi_1\vert<\delta_1(\bar{\xi})$. 
\end{lemma}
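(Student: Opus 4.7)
The plan is to exploit the explicit linear form of the outer flow and reduce the statement to a two-dimensional implicit function argument anchored at the homotetic reference orbit. The outer Cauchy problem is linear, so for any admissible initial data the unique globally defined solution is
$$
y(s;\xi_0,\dot\theta_0)=\cos(\omega s)\gamma(\xi_0)+\frac{\sin(\omega s)}{\omega}\bigl(\dot r_0(\dot\theta_0)\hat x_1+\dot\theta_0\hat x_2\bigr),
$$
with $\dot r_0(\dot\theta_0)$ prescribed by the zero-energy constraint; existence and uniqueness of the ODE part are automatic, and the real content of the lemma is the existence of a transverse return to $\partial D$.

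First I would fix the reference trajectory, corresponding to $(\xi_0,\dot\theta_0)=(\bx,0)$: here the initial velocity $\dot r_0^{*}\hat x_1$, with $\dot r_0^{*}=\sqrt{2\E-\omega^2|\gbx|^2}$, is radial, the solution lies on the ray through $\gbx$, and it returns to $\gbx$ at $T^{*}=(2/\omega)\arccos(\omega|\gbx|/\sqrt{2\E})$. I would then introduce the map
$$
\Phi(s,\xi_1;\xi_0,\dot\theta_0):=y(s;\xi_0,\dot\theta_0)-\gamma(\xi_1)\in\R^2,
$$
which vanishes at $(T^{*},\bx;\bx,0)$, and compute its partial Jacobian in $(s,\xi_1)$ at that point. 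Its two columns are $\partial_s y(T^{*};\bx,0)=-\dot r_0^{*}\hat x_1$ and $-\dot\gamma(\bx)$; they are independent precisely because $\hat x_1\nparallel\dot\gamma(\bx)$, i.e.\ by (\ref{cond Hom})(i). The implicit function theorem then produces $\delta_0,\rho>0$ together with $C^1$ maps $(\xi_0,\dot\theta_0)\mapsto(T,\xi_1)$ with $T$ close to $T^{*}$ and $\xi_1$ close to $\bx$; by further shrinking $\delta_0$ and $\rho$ one obtains $|\xi_1-\bx|<\delta_1$ for any prescribed $\delta_1$.

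The step I expect to be the main obstacle is showing that this $T$ is the first positive return to $\partial D$, i.e.\ that $y(s;\xi_0,\dot\theta_0)\notin\bar D$ on $(0,T)$. For the reference trajectory this follows from (\ref{cond Hom})(ii): the ray through $\gbx$ meets $\partial D$ only at $\gbx$, and since the origin lies in $D$ the portion beyond is exterior to $\bar D$, so $y^{*}((0,T^{*}))\cap\bar D=\emptyset$. Away from the endpoints, this exclusion is stable under $C^0$-perturbation by compactness; near $s=0$ and $s=T^{*}$ one appeals to transversality, which still follows from (\ref{cond Hom})(i) because $\hat x_1$ has nonzero component along the outward normal at $\gbx$ as soon as $\gbx\nparallel\dot\gamma(\bx)$. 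Shrinking $\delta_0,\rho$ one final time, the same properties persist throughout the neighborhood, ruling out earlier crossings of $\partial D$ and completing the plan.
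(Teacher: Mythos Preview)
Your proposal is correct and follows essentially the same approach the paper indicates: a transversality argument anchored at the homotetic orbit, combined with the implicit function theorem, which is exactly what the paper sketches (``The proof relies on a transversality argument, standard in detecting one side Poincar\'e sections, based upon regularity of solutions of Cauchy's problems and the implicit function theorem''). Your explicit computation of the Jacobian columns $-\dot r_0^{*}\hat x_1$ and $-\dot\gamma(\bx)$, and your identification of their independence with condition~(\ref{cond Hom})(i), is precisely the transversality the paper alludes to; likewise, your treatment of the first-return issue via (\ref{cond Hom})(ii), compactness away from the endpoints, and transversality near them, matches the content of the subsequent Remark in the paper.
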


The proof relies on a transversality argument, standard in detecting one side Poincar\'e sections, based upon regularity of solutions of Cauchy's problems and the implicit function theorem (see, e.g. the similar construction in \cite{ST2012}).

\begin{rem}\label{rem appB}
	The validity of condition (\ref{cond Hom}(ii)) in a neigborhood of $\bx$ entails that the point $\gamma(\xi_1)$ defined as in Lemma \ref{lemma esistenza outer app} is such that, for every $s\in(0,T(\x0, \dot{\theta}_0))$, $y(s; \x0, \dot{\theta}_0)\notin \bar{D}$, that is, there are no other intersections of $\partial D$ and the arc $y([0,T(\x0,\dot{\theta}_0)], \x0,\dot{\theta}_0)$ other than  $\gamma(\x0)$ ad $\gamma(\x1)$. This is in fact a consequence the continous dependence on the initial conditions, for wich, if $(\x0,\dot \theta_0)$ are sufficiently close to $(\bx,0)$, then $y(\cdot;\x0,\dot \theta_0)$ is arbitrarily close to $y(\cdot;\bx,0)$ in the $C^0$ topology. 
\end{rem}
The above Lemma states the existence of a local Poincar\'e section in the energy manifold of $\partial D\times \mathbb R^2$ for the outer dynamics  \eqref{eq:outer_dynamics app} in a neighbourhood of the initial condition of a radial brake orbit (namely, the direction of the velocity vector coincides with the radial one) and under some local conditions on $\partial D$. The condition $\vert\dot{\theta}_0\vert<\delta_0(\bar{\xi})\equiv\delta$ can be rephrased by considering the angle $\alpha\in[-\pi/2, \pi/2]$ between the initial velocity $y'(0)=\dot{r}_0\hat{x}_1+\dot{\theta}_0\hat{x}_2$ and the radial unit vector $\hat{x}_1$ of $\gamma(\bar{\xi})$ (notice that $\alpha$ and $\dot{\theta}_0$ have always the same sign). In particular, one has that 
\begin{equation*}
	\tan{\alpha}=\frac{\dot{\theta}_0}{\dot{r}_0}=\frac{\dot{\theta}_0}{\sqrt{2\mathcal{E}-\omega^2\vert\gamma(\x0)\vert^2-\dot{\theta}_0^2}} \Leftrightarrow \dot{\theta}_0=\tan{\alpha}\sqrt{\frac{2\mathcal{E}-\omega^2\vert\gamma(\x0)\vert^2}{1+\tan{\alpha}^2}}=f(\alpha, \x0).
\end{equation*}
As $f(\alpha,\x0)$ is continous and $f(0,\bx)=0$,  there exist $\epsilon_\alpha,\epsilon_{\x0}>0$ such that $\epsilon_{\x0}<\delta_0(\bx)$ and, if $\vert\alpha\vert<\epsilon$ and $|\bx-\x0|<\epsilon_{\x0}$, then $\vert\dot{\theta}_0\vert<\delta$. 
Taking together Lemma \ref{lemma esistenza outer app} and Remark \ref{rem appB}, one can eventually state Theorem \ref{thm esistenza ext}.

\subsection{Local existence and transversality of the inner arcs}\label{ssec: localInner app}

Now we turn to the inner Problem (\ref{inprob}).
We propose the proof of Proposition \ref{lem levi civita}, along with the preliminary results  which allows to prove Theorem \ref{thm esistenza int}.
\begin{proof}[Proof of Proposition \ref{lem levi civita}]
	Denoted with $r(s)=\vert z(s)\vert$, consider the reparametrisation $s=s(\tilde{\tau})$ such that
	\begin{equation*}
		\frac{d}{ds}=\frac{1}{r(s(\tilde{\tau}))}\frac{d}{d\tilde{\tau}}\Rightarrow\frac{d^2}{ds^2}=-\frac{1}{r(s(\tilde{\tau}))^3}\frac{d}{d\tilde{\tau}}+\frac{1}{r(s(\tilde{\tau}))^2}\frac{d^2}{d\tilde{\tau}^2}. 
	\end{equation*}
	Denoting, with an abuse of notation, $'=d/d\tilde{\tau}$, the first and second equations in (\ref{inprob}) become
	\begin{equation*}
		r(\tilde{\tau})z''(\tilde{\tau})-r'(\tilde{\tau})z'(\tilde{\tau})+\mu z(\tilde{\tau})=0, \quad \frac{1}{2r(\tilde{\tau})^2}\vert z'(\tilde{\tau})\vert^2-\mathcal{E}-h-\frac{\mu}{r(\tilde{\tau})}=0.
	\end{equation*} 
	Identifying now $\mathbb{R}^2$ and $\mathbb{C}$, let us consider a new spatial coordinate $w\in\mathbb{C}$ such that $z(\tilde{\tau})=w^2(\tilde{\tau})$: we have then 
	\begin{equation*}
		\begin{split}
			&2r(\tilde{\tau})w(\tilde{\tau})w''(\tilde{\tau})-w(\tilde{\tau})^2(\vert w'(\tilde{\tau})\vert^2-\mu)=0, \quad 2\vert w'(\tilde{\tau})\vert^2-\mu=r(\tilde{\tau})(\mathcal{E}+h)\\
			&\Rightarrow 2w''(\tilde{\tau})=w(\tilde{\tau})(\mathcal{E}+h).
		\end{split}
	\end{equation*}
	Finally, considering the new time variable $\tau=\tilde{\tau}/2$ (again, with an abuse of notation, $'=d/d\tau$), one obtains the final Cauchy problem
	\begin{equation*}
		\begin{cases}
			w''(\tau)=2(\mathcal{E}+h)w(\tau), &\tau\in[-T,T]\\
			\frac{1}{2}\vert w'(\tau)\vert^2-(\mathcal{E}+h)\vert w(\tau)\vert^2=\mu &\tau\in[-T,T]\\
			w(-T)=w_0, w'(-T)=\dot{w}_0
			
		\end{cases}
		=
		\begin{cases}
			w''(\tau)=\Omega^2w(\tau), &\tau\in[-T,T]\\
			\frac{1}{2}\vert w'(\tau)\vert^2-\frac{\Omega^2}{2}\vert w(\tau)\vert^2=E &\tau\in[-T,T],\\
			w(-T)=w_0, w'(-T)=\dot{w}_0
		\end{cases}
	\end{equation*}
	for some $T>0$ and suitable initial conditions $w_0,\dot{w}_0$ (we will return to the determination of $w_{0}$ and $\dot{w}_0$ in Proposition \ref{InizCondTras app}). The solutions of (\ref{inprob}) can be then seen, in a suitable parametrisation, as complex squares of solutions of an harmonic repulsor with fixed ends boundary conditions, energy equal to $E=\mu$ and frequency $\Omega=\sqrt{2(\mathcal{E}+h)}$.	
\end{proof}

As for the outer problem, suppose that $\partial D$ is a closed curve of class $C^2$ parametrised by $\gamma(\xi): I\rightarrow \mathbb{R}^2$: passing to the Levi-Civita plane, $\gamma$ is transformed according to the same rule $w^2=z$. The existence in the physical plane of a point $z_1$ for wich the inner arc $z(s)$ encounters  the boundary $\partial D$ again translates, in the Levi-Civita plane, in the existence of a point $w_1$ which encounters the transformed of $\gamma$.   As the complex square determines a double covering of $\mathbb{C}$, it is clear that every arc $z(\tau)$ in the physical plane corresponds to two arcs $w(\tau)$ in the Levi-Civita plane, depending on the choice of $w_0$, which is such that $w_0^2=z_0^I$, and a suitable transformed velocity $\dot{w}_0$. In the following, we will work with the Levi-Civita variables, taking respectively for $w_0$ the negative determination of the square root of $z_0$ and for $w_1$ the positive determination of the square root of $z_1$, namely, in polar coordinates, 
\begin{equation}\label{determinazioni app}
	z_0=\vert z_0\vert e^{i\theta_0}\Rightarrow w_0=-\sqrt{\vert z_0\vert}e^{i\frac{\theta_0}{2}},\quad z_1=\vert z_1\vert e^{i\theta_1}\Rightarrow w_1=\sqrt{\vert z_1\vert}e^{i\frac{\theta_1}{2}}. 
\end{equation}
The transformed boundary follows the same rules, and is defined in two neighborhoods of $w_0$ and $w_1$. More precisely, let us suppose that $\bx$ satisfies condition (\ref{cond Hom}) and, additionally, $\gamma(\bar{\xi})$ points in the direction of $e_1=(1,0)$: for the sake of simplicity, we will focus on this particular value of $\bx$, as for every $\bx'\in I$ satisfying (\ref{cond Hom}) we can consider the rotated basis $(e_1',e_2')$ such that $\bx'$ has the properties of $\bar{\xi}$.  
\begin{defn}
	Defined $\bar{\xi}$ as above, there exists $\bar{\epsilon}>0$ such that, if $\gamma(\xi)$ is expressed in polar coordinates, namely, $\gamma(\xi)=\rho(\xi)e^{i\theta(\xi)}$,  the curves 
	\begin{equation*}
		\begin{aligned}
			&\phi_+(\xi):(\bar{\xi}-\bar{\epsilon},\bar{\xi}+\bar{\epsilon})\rightarrow\mathbb{C}, \quad \phi_+(\xi)=\sqrt{\rho(\xi)}e^{i\theta(\xi)/2},\\
			&\phi_-(\xi):(\bar{\xi}-\bar{\epsilon},\bar{\xi}+\bar{\epsilon})\rightarrow\mathbb{C}, \quad \phi_-(\xi)=-\sqrt{\rho(\xi)}e^{i\theta(\xi)/2}=\sqrt{\rho(\xi)}e^{i(\theta(	\xi)/2+\pi)}
		\end{aligned}
	\end{equation*}
	are well defined in the Levi-Civita plane. 
\end{defn}
As an immediate consequence of the conformality of the map $w\mapsto w^2$ we have the following
\begin{lemma}\label{lemma app}
	The transformed curves $\phi_{\pm}(\xi)$ preserve the angle between the radial and the tangent direction of $\gamma(\xi)$. In particular, if condition \ref{cond Hom} holds for $\gamma{(\xi)}$, then it holds for $\phi_{\pm}(\xi)$ with $\xi\in(\bar{\xi}-\bar{\epsilon},\bar{\xi}+\bar{\epsilon})$, possibly reducing $\bar{\epsilon}$.
\end{lemma}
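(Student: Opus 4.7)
The plan is to exploit the conformality of $w\mapsto w^2$ on $\mathbb{C}\setminus\{0\}$: since $\phi_\pm^2=\gamma$ and $\gamma(\bar\xi)\neq 0$, the squaring map sends the tangent and radial directions of $\phi_\pm$ at $\xi$ to the tangent and radial directions of $\gamma$ at $\xi$, scaling both by the same non-zero complex factor. The claim on the angle then reduces to a one-line computation, while the local star-convexity will follow from the fact that radial rays are mapped to radial rays, combined with continuity.

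\textbf{Step 1 (angle preservation).} Write $\gamma(\xi)=\rho(\xi)e^{i\theta(\xi)}$. A direct differentiation gives
\begin{equation*}
\dot\gamma(\xi)=e^{i\theta(\xi)}\bigl(\dot\rho(\xi)+i\rho(\xi)\dot\theta(\xi)\bigr),\qquad
\dot\phi_\pm(\xi)=\pm\frac{e^{i\theta(\xi)/2}}{2\sqrt{\rho(\xi)}}\bigl(\dot\rho(\xi)+i\rho(\xi)\dot\theta(\xi)\bigr).
\end{equation*}
Hence the ratio $\dot\gamma(\xi)/\gamma(\xi)=(\dot\rho+i\rho\dot\theta)/\rho$ and the ratio $\dot\phi_\pm(\xi)/\phi_\pm(\xi)=(\dot\rho+i\rho\dot\theta)/(2\rho)$ have the same argument. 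Therefore the (unsigned) angle between the radial direction $\phi_\pm(\xi)/|\phi_\pm(\xi)|$ and the tangent direction $\dot\phi_\pm(\xi)/|\dot\phi_\pm(\xi)|$ coincides with the analogous angle for $\gamma$ at $\xi$. In particular, condition (\ref{cond Hom})(i) is preserved at $\bar\xi$ by $\phi_\pm$.

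\textbf{Step 2 (local star-convexity).} The squaring map sends a half-line $\{t\,\phi_\pm(\bar\xi):t\ge 0\}$ in the Levi--Civita plane to the half-line $\{t^2\,\gamma(\bar\xi):t\ge 0\}$ in the physical plane. Suppose by contradiction there exists a sequence $\xi_n\to\bar\xi$ in any fixed neighborhood and $t_n>0$, $t_n\neq 1$, with $t_n\phi_\pm(\bar\xi)=\phi_\pm(\xi_n)$. Squaring yields $t_n^2\gamma(\bar\xi)=\gamma(\xi_n)$, contradicting (\ref{cond Hom})(ii) for $\gamma$ together with the transversality at $\bar\xi$ obtained in Step 1 (which, by continuity and the regularity of $\gamma$, forces $\xi_n=\bar\xi$ for $n$ large enough, hence $t_n=1$). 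Therefore, possibly shrinking $\bar\epsilon$, the ray through $\phi_\pm(\bar\xi)$ meets $\mathrm{supp}(\phi_\pm|_{(\bar\xi-\bar\epsilon,\bar\xi+\bar\epsilon)})$ only at $\phi_\pm(\bar\xi)$, and (\ref{cond Hom})(ii) holds for $\phi_\pm$ in this reduced interval.

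The only delicate point is the passage from the global condition (\ref{cond Hom})(ii) available for $\gamma$ to a local statement for $\phi_\pm$: since only one of the two preimages under squaring is retained by the choice of sign in (\ref{determinazioni app}), one must check that the second branch does not intrude near $\bar\xi$. This is handled automatically once $\bar\epsilon$ is taken so small that $\phi_\pm$ is a diffeomorphism onto its image and the half-line $\{t\,\phi_\pm(\bar\xi)\}$ stays in the open half-plane selected by the sign, so the correspondence with rays in the $z$-plane is bijective on the relevant arc. Combining Steps 1 and 2 yields the lemma.
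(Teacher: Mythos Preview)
Your proof is correct and follows the same idea as the paper, which simply records the lemma as ``an immediate consequence of the conformality of the map $w\mapsto w^2$'' without further argument. Your Step~1 makes this explicit via a direct computation of $\dot\phi_\pm/\phi_\pm$ versus $\dot\gamma/\gamma$, and your Step~2 on the star-convexity condition~(ii) actually goes beyond what the paper spells out---the paper leaves~(ii) implicit, whereas you verify it by pulling back radial rays under squaring, which is a useful clarification.
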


%
Let us focus on the transformed arc in the Levi-Civita plane: the next Proposition states the existence, under suitable hypotheses on the initial conditions, of a solution of Problem \ref{LCprob} which has the desired transversality properties. 
\begin{prop}\label{prop1 app}
	If condition \ref{cond Hom} holds for $\gamma(\bar{\xi})$, then there are $\tilde{\lambda}>0,0<\tilde{\epsilon}<\bar{\epsilon}$ such that, for every $\xi_0\in [\bar{\xi}-\tilde{\epsilon},\bar{\xi}+\tilde{\epsilon}]$, $\dot{\theta}_0\in[-\tilde{\lambda},\tilde{\lambda}]$ there are $T>0, \xi_1\in I$ such that the Cauchy problem
	\begin{equation*}
		\begin{cases}
		(\Prob_{LC})[w(\tau)] &\tau\in[0,T]\\
			w(0)=\phi_-(\xi_0), w'(0)=\dot{r}_0 e_1+\dot{\theta}_0 e_2
		\end{cases}
	\end{equation*} 
	with $\dot{r}_0=\sqrt{2E+\Omega^2|\phi_-(\xi_0)|^2-\dot{\theta}_0^2}$ admits the unique solution $w(\tau; \xi_0, \dot{\theta}_0)$. Moreover, $w(T;\xi_0,\dot{\theta}_0)=\phi_+(\xi_1)$. In addition, $w'(T(\xi_0, \dot{\theta}_0); \xi_0,\dot{\theta}_0)\nparallel\dot{\phi}_+(\xi_1(\xi_0,\dot{\theta}_0))$, namely, the arc is not tangent to $\partial D$.  
\end{prop}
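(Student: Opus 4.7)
The plan is to mimic the transversality argument used for the outer dynamics in Lemma \ref{lemma esistenza outer app}, carried out directly in the Levi-Civita variables. Up to a rigid rotation of the frame, I assume $\gamma(\bar\xi)=|\gamma(\bar\xi)|\,e_1$; by Lemma \ref{lemma app}, condition \eqref{cond Hom} is then inherited by both branches $\phi_\pm$, and in particular $\phi_+(\bar\xi)=\sqrt{|\gamma(\bar\xi)|}\,e_1$ while $\dot\phi_+(\bar\xi)$ is not parallel to $e_1$.

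First I would isolate the homothetic reference solution corresponding to $(\xi_0,\dot\theta_0)=(\bar\xi,0)$. In this case the Cauchy problem reduces to a one-dimensional linear repulsor along $e_1$, with starting point $\phi_-(\bar\xi)=-\sqrt{|\gamma(\bar\xi)|}\,e_1$ and outward initial velocity $\dot r_0 e_1$, $\dot r_0=\sqrt{2E+\Omega^2|\phi_-(\bar\xi)|^2}$. The explicit solution is a combination of $\cosh(\Omega\tau)$ and $\sinh(\Omega\tau)$ that crosses the origin once and reaches $\phi_+(\bar\xi)=\sqrt{|\gamma(\bar\xi)|}\,e_1$ at a unique time $\bar T>0$, with $w_0'(\bar T)\parallel e_1$. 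By continuous dependence of the linear flow on initial data, for $(\xi_0,\dot\theta_0)$ close to $(\bar\xi,0)$ the solution $w(\tau;\xi_0,\dot\theta_0)$ is well defined on a neighbourhood of $[0,\bar T]$ and lies close to $w_0$ in $C^1$.

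Next I would set up the implicit function theorem. Define
\begin{equation*}
G(T,\xi_1;\xi_0,\dot\theta_0)=w(T;\xi_0,\dot\theta_0)-\phi_+(\xi_1)\in\mathbb{R}^2,
\end{equation*}
so that $G(\bar T,\bar\xi;\bar\xi,0)=0$. The Jacobian with respect to $(T,\xi_1)$ at the base point is the $2\times 2$ matrix whose columns are $w_0'(\bar T)$ and $-\dot\phi_+(\bar\xi)$; the first is parallel to $\phi_+(\bar\xi)$ while, as noted, $\phi_+(\bar\xi)\nparallel\dot\phi_+(\bar\xi)$, so the two columns are linearly independent and the Jacobian is invertible. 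The implicit function theorem then yields $\tilde\epsilon\in(0,\bar\epsilon)$ and $\tilde\lambda>0$, together with $C^1$ functions $T(\xi_0,\dot\theta_0)$ and $\xi_1(\xi_0,\dot\theta_0)$ on $[\bar\xi-\tilde\epsilon,\bar\xi+\tilde\epsilon]\times[-\tilde\lambda,\tilde\lambda]$ realizing the identity $w(T;\xi_0,\dot\theta_0)=\phi_+(\xi_1)$. Shrinking $\tilde\lambda$ a first time guarantees that $\dot r_0$ stays real (i.e.\ $2E+\Omega^2|\phi_-(\xi_0)|^2-\dot\theta_0^2>0$); shrinking once more, the open condition $w'(T;\xi_0,\dot\theta_0)\nparallel\dot\phi_+(\xi_1(\xi_0,\dot\theta_0))$ persists from the base point by continuity, delivering the claimed transversality.

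The main subtlety I anticipate is a careful branch/sign bookkeeping: one must check that the trajectory started on the $\phi_-$ sheet actually exits a neighbourhood of $\phi_-(\bar\xi)$, traverses the plane and lands precisely on the $\phi_+$ sheet (rather than revisiting $\phi_-$), which is why one uses the \emph{negative} determination at $\tau=0$ and the \emph{positive} determination at $\tau=T$, as in \eqref{determinazioni app}. The explicit form of $w_0(\tau)$, together with the smallness of $\tilde\epsilon,\tilde\lambda$, ensures that $w(\tau;\xi_0,\dot\theta_0)$ remains in the correct sheet near $\phi_+(\bar\xi)$ at time $T$, so that the implicit function solution indeed satisfies $w(T)=\phi_+(\xi_1)$ with the intended geometric meaning (the physical arc re-enters $\partial D$ at $\gamma(\xi_1)$, transversally).
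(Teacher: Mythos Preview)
Your proposal is correct and follows exactly the approach the paper indicates: the paper's own proof is merely the sentence that ``the proof is again rather standard and relies on a transversality argument for the regularised flow'', together with continuity of the flow and the angle-preserving property of $w\mapsto w^2$ (Lemma \ref{lemma app}), and your argument supplies precisely those details --- the explicit homothetic reference orbit in the repulsor, the implicit function theorem applied to $G(T,\xi_1;\xi_0,\dot\theta_0)=w(T;\xi_0,\dot\theta_0)-\phi_+(\xi_1)$, and the persistence of the open transversality condition. Your remark on the $\phi_-/\phi_+$ sheet bookkeeping is also to the point and consistent with \eqref{determinazioni app}.
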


The proof is again rather standard and relies on a transversality argument for the regularised flow. Moreover, continuity of the regularised flow with respect to the initial conditions and angle preserving of the complex square map entail the desired transversality property. 

Let us notice that the smallness condition on the velocity's orthogonal component $\dot{\theta}_0$ can be given also in terms of the angle between the radial direction and the initial velocity vector. 
As in the outer case, we can conseder the angle $\alpha\in[-\pi/2, \pi/2]$ between  $w'(0)=\dot{r}_0e_1+\dot{\theta}_0e_2$ and $\phi_-(\bar{\xi})$ and have 
\begin{equation*}
	\tan{\alpha}=\frac{\dot{\theta}_0}{\dot{r}_0}=\frac{\dot{\theta}_0}{\sqrt{2E+\Omega^2\vert\phi_-(\x0)\vert^2-\dot{\theta}_0^2}} \Leftrightarrow \dot{\theta}_0=\tan{\alpha}\sqrt{\frac{2E+\Omega^2\vert\phi_-(\x0)\vert^2}{1+\tan{\alpha}^2}}=f(\alpha,\x0).
\end{equation*}
As $f(\alpha,\x0)$ is continous and $f(0,\bx)=0$,  there exist $\lambda_{\alpha}>0$ and $\lambda_{\x0}>0$ such that $\lambda_{\x0}<\epsilon$ and, if $\vert\alpha\vert<\lambda_{\alpha}$ and $|\bx-\x0|<\lambda_{\x0}$, then $\vert\dot{\theta}_0\vert<\lambda$.
\begin{prop}\label{InizCondTras app}
	Let us consider $\bar{\xi}\in I$ such that $\gamma(\bar{\xi})=\rho e_1$, $\rho>0$, and suppose that condition \ref{cond Hom} holds. Let $\bar{\epsilon}>0$ such that the curves $\phi_{\pm}: [-\bar{\epsilon}+\bar{\xi},\bar{\epsilon}+\bar{\xi}]$ are well defined, and choose $\xi_0\in [-\bar{\epsilon}+\bar{\xi},\bar{\epsilon}+\bar{\xi}]$ and $\beta\in[-\frac{\pi}{2}, \frac{\pi}{2}]$. Then the system (in polar coordinates)
	\begin{equation*}
		\begin{cases}
			(\Prob_I)[z(s)], \quad s\in[0,S]\\
			z(0)=\gamma(\xi_0)=\rho(\xi_0) e^{i\theta(\xi_0)}, z'(0)=\sqrt{2}\sqrt{\mathcal{E}+h+\frac{\mu}{\rho(\xi_0)}}e^{i(\theta(\xi_0)+\beta)}
		\end{cases}
	\end{equation*}
	is conjugated, in the Levi-Civita plane, and considering $\tau=\tau(s)$ the Levi-Civita time, to the problem
	\begin{equation*}
		\begin{cases}
		(\Prob_{LC})[w(\tau)], \quad \tau\in[0,T]\\
			w(0)=-\sqrt{\rho(\xi_0)}e^{i\theta(\xi_0)/2}, w'(0)=-\sqrt{2E+\Omega^2\rho(\xi_0)}e^{i(\theta(\xi_0)/2+\beta)}
		\end{cases}
	\end{equation*}
	for a suitable $T>0$. In other words, the angles between the original initial conditions and the trasformed ones, namely, $\widehat{z(0),\dot{z}(0)}$ and $\widehat{w(0), w'(0)}$, are equal. 
\end{prop}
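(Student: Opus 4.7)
The statement is essentially a direct computation, so my plan is to unpack the Levi-Civita change of variables and translate the initial data through it, checking the angle-preserving property at the end.

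First, I would recall the two ingredients of the Levi-Civita transformation from Proposition \ref{lem levi civita}: the spatial change $z=w^2$ and the time change $d\tau/ds=1/(2|z(s)|)=1/(2|w|^2)$. By the chain rule,
\begin{equation*}
z'(s)=\frac{dz}{d\tau}\,\frac{d\tau}{ds}=2w\,w'(\tau)\cdot\frac{1}{2|w|^2}=\frac{w}{|w|^2}\,w'(\tau),
\end{equation*}
which inverts to $w'(\tau)=\bar w\,z'(s)$, using $|w|^2/w=\bar w$. This is the only nontrivial identity needed, since the transformation of the ODE and the energy relation are already proved in Proposition \ref{lem levi civita}.

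Next, I would pick the initial datum $w(0)$. Writing $z(0)=\rho(\xi_0)e^{i\theta(\xi_0)}$ and taking, as prescribed in \eqref{determinazioni app}, the negative determination of the complex square root, I obtain
\begin{equation*}
w(0)=-\sqrt{\rho(\xi_0)}\,e^{i\theta(\xi_0)/2},\qquad \bar w(0)=-\sqrt{\rho(\xi_0)}\,e^{-i\theta(\xi_0)/2}.
\end{equation*}
Plugging into $w'(0)=\bar w(0)\,z'(0)$ with $z'(0)=\sqrt{2}\sqrt{\E+h+\mu/\rho(\xi_0)}\,e^{i(\theta(\xi_0)+\beta)}$ gives
\begin{equation*}
w'(0)=-\sqrt{2\rho(\xi_0)(\E+h)+2\mu}\,\,e^{i(\theta(\xi_0)/2+\beta)}=-\sqrt{2E+\Omega^2\rho(\xi_0)}\,\,e^{i(\theta(\xi_0)/2+\beta)},
\end{equation*}
once one recognises $\Omega^2=2(\E+h)$ and $E=\mu$. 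This is exactly the claimed Levi-Civita initial velocity, and a consistency check against the regularised energy $\frac{1}{2}|w'(0)|^2-\frac{\Omega^2}{2}|w(0)|^2=E$ is immediate.

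Finally, for the angle statement, observe that $\arg z'(0)-\arg z(0)=\beta$ by construction, while
\begin{equation*}
\arg w'(0)-\arg w(0)=\bigl(\tfrac{\theta(\xi_0)}{2}+\beta+\pi\bigr)-\bigl(\tfrac{\theta(\xi_0)}{2}+\pi\bigr)=\beta,
\end{equation*}
so $\widehat{z(0),z'(0)}=\widehat{w(0),w'(0)}$. Conceptually this is just conformality of $w\mapsto w^2$ (the Jacobian at $w(0)$ is multiplication by $2w(0)$, a rotation-dilation), combined with the fact that the time reparametrisation $d\tau/ds>0$ only rescales the velocity and does not rotate it. There is no genuine obstacle here; the only subtlety is bookkeeping the $\pi$ phase coming from the negative square-root determination, which cancels between $\arg w'(0)$ and $\arg w(0)$, so the angle is preserved regardless of which branch is chosen.
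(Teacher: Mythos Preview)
Your proof is correct and follows essentially the same approach as the paper: a direct computation pushing the initial data through the Levi-Civita substitution $z=w^2$ and the time change. The only difference is cosmetic: the paper carries out the time change in two steps (first $d/ds=|z|^{-1}\,d/d\tilde\tau$, then $\tau=\tilde\tau/2$) and solves for $w'(0)$ from $\rho(\xi_0)\,z'(0)=w(0)\,w'(0)$, whereas you combine them into $d\tau/ds=1/(2|z|)$ and invert via the tidy identity $w'=\bar w\,z'$; your added remark on conformality is a nice conceptual gloss but not needed for the argument.
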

\begin{proof}
	From the definition of $\phi_-$, we have that $w(0)=\phi_-(\xi_0)=-\sqrt{\rho(\xi_0)}e^{i(\theta(\xi_0))}=\sqrt{\rho(\xi_0)}e^{i(\theta(\xi_0)+\pi)}$. To compute $w'(0)=dw(0)/d\tau$, we go through the following Levi-Civita transformations: 
	\begin{itemize}
		\item $\frac{d}{ds}=\frac{1}{|z(\tilde{\tau}(s))|}\frac{d}{d\tilde{\tau}}$, then, for $s=0$,  
		\begin{equation*}
			\sqrt{2}\sqrt{\mathcal{E}+h+\frac{\mu}{\rho(\xi_0)}}e^{i\theta(\xi_0)}=\frac{d}{ds}z(0)=\frac{1}{\rho(\xi_0)}\frac{d}{d\tilde{\tau}}z(0); 
		\end{equation*}
		\item $z=w^2$, then 
		\begin{equation*}
			\sqrt{2}\rho(\xi_0)\sqrt{\mathcal{E}+h+\frac{\mu}{\rho(\xi_0)}}e^{i\theta(\xi_0)}=\frac{d}{d\tilde{\tau}}z(0)=2w(0)\frac{d}{d\tilde{\tau}}w(0); 
		\end{equation*}
		\item $\tau=\tilde{\tau}/2$, then 
		\begin{equation*}
			\sqrt{2}\sqrt{\mathcal{E}+h+\frac{\mu}{\rho(\xi_0)}}e^{i(\theta(\x0)+\beta)}=w(0)\frac{d}{d\tau}w(0)\Rightarrow w'(0)=-\sqrt{2E+\Omega^2\rho(\xi_0)}e^{i(\theta(\xi_0)/2+\beta)}
		\end{equation*}		
	\end{itemize} 
\end{proof}
The angle between the initial conditions in the physical plane is then preserved after the passage in the Levi-Civita reference frame: this assures that, if we fix some "smallness" condition on the angle $\alpha$ between the initial velocity and the direction of $\gamma(\bx)$ in the original reference frame, they will hold also in the Levi-Civita plane. This allows, along with the previous results, to state Theorem \ref{thm esistenza int}.

\bibliography{articolo_ellissi}
\bibliographystyle{acm}
\end{document}